\newtheorem{theorem}{Theorem}[section]
\newtheorem{maintheorem}{Theorem}
\newtheorem{ThmSmall}{Corollary}
\newtheorem{Thm}{Theorem}
\newtheorem{nointthm}{Theorem}
\newcommand{\cod}{\text{cod}}
\newtheorem{proposition}[theorem]{Proposition}
\newtheorem{corollary}[theorem]{Corollary}
\theoremstyle{definition}
\newtheorem{definition}[theorem]{Definition}
\newtheorem{example}[theorem]{Example}
\newtheorem{problem}[theorem]{Problem}
\newtheorem{remark}[theorem]{Remark}
\numberwithin{equation}{section}
\newcommand{\R}{\mbox{Res}}
\newcommand{\ar}{\textsf{arg}}
\newcommand{\F}{\mathcal F}
\newcommand{\K}{\mathcal K}
\newcommand{\Po}{\mathbb P}
\newcommand{\Om}{\Omega}
\newcommand{\C}{\mathbb C}
\newcommand{\Z}{\mathbb Z}
\title[Higher Codimensional Foliations and Kupka singularities]{Higher
  Codimensional Foliations with Kupka Singularities}
\date{\today}
\author[Calvo--Andrade, Corr\^ea, Fernandez--P\'erez]
{O. Calvo--Andrade \and M. Corr\^ea JR
\and A. Fern\' andez--P\' erez }
\address{\emph{O. Calvo--Andrade}: IMPA. Estrada Dona Castorina 110. Jardim Bot\^anico.}
\curraddr{CIMAT. Apdo. Postal 402, Guanajuato 36000. M\'exico.}
\email{omegar.mat@gmail.com}
\address{\emph{M. Corr\^ea Jr.}: Depto. de Mat.--ICEX
   Universidade Federal de Minas Gerais, UFMG}
\curraddr{Av. Ant\^onio Carlos 6627, 31270-901,
  Belo Horizonte-MG, Brasil.}
\email{mauriciomatufmg@gmail.com}
\address{\emph{A. Fern\'andez--P\'erez}: Depto. de Mat.--ICEX
  \\ Universidade Federal de Minas Gerais, UFMG}
\curraddr{Av. Ant\^onio Carlos 6627, 31270-901,
  Belo Horizonte-MG, Brasil.}
\email{arturofp@mat.ufmg.br}
\subjclass[2010]{Primary 58A17- 32S65}
\keywords{Kupka singular set - Holomorphic foliations}
\begin{document}

\begin{abstract}
We consider holomorphic foliations of dimension $k>1$ and
codimension $\geq 1$ in the projective space $\Po^n$, with a compact
connected component of the Kupka set. We prove that, if the
transversal type is linear with positive integers eigenvalues, then
the foliation consist on the fibers of a rational fibration
$\Phi:\Po^{n}\dasharrow \Po^{n-k}$. As a corollary, if $\mathcal{F}$
is a foliation such that $\dim(\F)\geq \cod(\F)+2$ and has
transversal type diagonal with different eigenvalues, then the Kupka
component $K$ is a complete intersection and we get the same
conclusion. The same conclusion holds if the Kupka set is a complete
intersection and has radial transversal type. Finally, as an
application, we find a normal form for non integrable codimension
one distributions on $\Po^{n}$.
\end{abstract}

\maketitle
\tableofcontents

\section{Introduction.}
\subsection{Notation}\label{Notation}

Let $M$ be a complex manifold. We denote by
$\mathcal{O}_M,\, \Theta_M$ and $\Om_M^p$ the sheaves of
holomorphic functions, holomorphic vector fields and holomorphic
$p$--forms over $M$. Therefore, if $n=\dim_{\C}M$, we denote by
$\mathcal{K}_M=\Om^n_M$ its canonical bundle
($\mathcal{O}_n,\Theta_n, \Om_n^p$ and $\mathcal{K}_n$ when
$M=\Po^n$). If $E$ is a holomorphic vector bundle, then
$\Om_M^p(E)$ denotes the sheaf of holomorphic $p$--forms with
values in $E$. No distinction will be
made between holomorphic vector bundles and locally
free analytic sheaves.

\subsection{Definitions}\label{defs}
Let $U\subset \C^n$ be an open set and $1\leq k<n$ an integer. A
holomorphic $(n-k)$--form $\Om\in \Om_{U}^{n-k}(U)$ is integrable, if for
each point $x\in U$, where
$\Om(x)\neq0$, there exits a neighborhood $V$ of $x$ and holomorphic 1--forms
$\eta_1,\dots ,\eta_{n-k}$ such that

\begin{itemize}
    \item $\Om|_V=\eta_1\wedge \cdots \wedge \eta_{n-k}$.
    \item There exists an $(n-k)\times (n-k)$
      matrix of holomorphic $1$--forms $\theta=(\theta_{ij})$ such that for all
      $i=1,\dots,n-k$ the following equality holds.
      \[ d\eta_i=\sum_{j=1}^{n-k} \theta_{ij}\wedge \eta_j.  \]
\end{itemize}

We recall the isomorphism between $\Theta_U\longleftrightarrow\Om^{n-1}_U$
defined by the contraction by a vector field.
\[
\frac{\partial}{\partial x_i}\longleftrightarrow \imath_{\frac{\partial}{\partial
x_i}}dx_1\wedge\dots\wedge dx_n= (-1)^i dx_1\wedge \cdots \wedge
\widehat{dx_i}\wedge\cdots \wedge dx_n
\]
therefore, $(n-1)$--forms in $\C^n$ are always integrable.

Let $M$ be a complex manifold of dimension $n$ and $ 1\leq k <n$.
A holomorphic foliation $\F$ on $M$, of dimension $\dim(\F)=k$ and
codimension $\cod(\F)=n-k$, may be defined by a triple
$\{\mathcal{U}=\{U_{\alpha}\},\Om_{\alpha},\lambda_{\alpha\beta}\}$
where
\begin{enumerate}
\item $\mathcal{U}=\{U_{\alpha}\}$ is an open covering of $M$.
\item $\{\Om_{\alpha}\}$ is a family of holomorphic, integrable $(n-k)$--forms, defined
  on the open set $U_{\alpha}$.
\item If $U_{\alpha}\cap U_{\beta}:=U_{\alpha\beta}\neq\emptyset$, we
  have the family
$\{\lambda_{\alpha\beta}\in \mathcal{O}^{\ast}(U_{\alpha\beta})\}$
satisfying
\[
\Om_{\alpha}|_{U_{\alpha\beta}}=\lambda_{\alpha\beta}\Om_{\beta}|_{U_{\alpha\beta}},
\]
\end{enumerate}
such that $\{\lambda_{\alpha\beta}\}\in\check{H}^1(\mathcal{U},\mathcal{O}^{\ast}).$

Let $\{W_{\sigma},\Om_{\sigma},\mu_{\sigma\tau}\}$ be another
triple satisfying conditions (1), (2) and (3) above. The triples
$\{U_{\alpha},\Om_{\alpha},\lambda_{\alpha\beta}\}\thicksim
\{W_{\sigma},\Om'_{\sigma},\mu_{\sigma\tau}\}$ are
\textit{equivalent} if there exists a pair $\{(V_i,\rho_i)\}$, where
$\{V_i\}$ is a refinement of the both coverings $\{W_{\sigma}\}$ and
$\{U_{\alpha}\}$, and $\{\rho_i\in\mathcal{O}^{\ast}(V_i)\}$ is a
family of functions such that if $V_i\subset U_{\alpha}\cap
W_{\tau}$ then
\[\Om_{\alpha}|_{V_i}=\rho_i\cdot\Om_{\tau}'|_{V_i},\]
it follows that $\lambda_{ij}=\rho_i\cdot \mu_{ij}\cdot \rho_j^{-1}$,
and then
$[\lambda_{\alpha\beta}]=[\mu_{\sigma\tau}]\in\check{H}^1(M,\mathcal{O}^{\ast})$.

Also, for two equivalent triples
$\{U_{\alpha},\Om_{\alpha},\lambda_{\alpha\beta}\} \thicksim
\{U_{\alpha},\Om'_{\alpha},\lambda_{\alpha\beta}\}$, the family of
functions $\{U_{\alpha},\rho_{\alpha}\}$ defines a global and
never vanishing holomorphic function on $M$.


We denote by $L$ the holomorphic line bundle over
$M$, represented by the 1--cocycle $\{\lambda_{\alpha\beta}\}\in
\check{H}^1(\mathcal{U},\mathcal{O}^{\ast})$. The family of
$(n-k)$-forms $\{\Om_{\alpha}\}$, glue in order to obtain a global
holomorphic section $\{\Om_{\alpha}\}\in
\check{H}^0(\mathcal{U},\Om_M^{n-k}(L))$. This motivates the
following definition.

\begin{definition}\label{foliation}
Let $M$ be a complex manifold of dimension $n$. Set $1\leq k<n$. A
holomorphic foliation $\F$ on $M$ of dimension $\dim({\F})=k$ and
codimension $\cod({\F})=n-k$, is an equivalence class of
integrable sections $[\Om]$, where $\Om \in H^0(M,\Om^{n-k}(L))$,
and $L$ is a holomorphic line bundle over $M$.
\end{definition}

The singular set of the foliation represented by the section $\Om$, is
defined by
\[
 S(\F)=S(\Om):=\{p\in M | \Om(p)=0\},
\]
it is an analytic set, with several irreducible components. We are
going to denote by $S_r(\F)$ the union of irreducible components of
dimension $r$. We may assume always that the maximal dimension of the
irreducible components of singular set is $n-2$.

We denote by $\F_k(M,L)\subset H^0(M,\Om_M^{n-k}(L))$, the set of
equivalence classes integrable sections or foliations on $M$, in
the case of the projective space, we use the notation
$\F_k(n,c)$ instead $\F_k(\Po^n,\mathcal{O}_n(c))$. It is well
known that, if $M$ is compact, $\F_k(M,L)$ is a projective variety
in general, with several irreducible components.

Let $\F\in\F_k(M,L)$ be a foliation represented by the section
$\Om$. The \textit{tangent sheaf} $T_{\F}\hookrightarrow \Theta$,
is defined as follows: the stalk at any point $x\in M$ is
\[
(T_{\F})_x=\{ \textbf{X}\in \Theta_x|\imath_{\mathbf{X}}\Om=0\},
\]
the quotient $N_{\F}:=\Theta/T_{\F}$ is the \emph{normal sheaf} of
$\F$. Then, these sheaves are defined by the exact sequence
\begin{equation}\label{tangent}
0\rightarrow T_{\F}\rightarrow \Theta \rightarrow
N_{\F}\rightarrow 0.
\end{equation}

We have that $T_{\F}$ is a coherent subsheaf of $\mathcal{O}_M$ modules,
that is closed under the Lie bracket of vector fields. In some
cases, it is locally free. The normal sheaf is also coherent, but
it is not locally free over the singular set. The sequence (\ref{tangent})
gives an alternative definition of holomorphic foliation \cite{baum}.

The dual of $T_{\F}$ is \textit{the cotangent sheaf} of $\F$. It
will be denoted by $T^{*}\F$. Its determinant, i.e.
$\det(T^{\ast}\F)=(\bigwedge^{k}T^{*}\F)^{**}$ is  the
\textit{canonical bundle of the foliation}, it will be denoted by
$\mathcal{K}_{\F}=\Om^k_{\F}$. Over the smooth locus $M\setminus
S(\F)$, the sheaves of the sequence (\ref{tangent}) are locally
free sheaves. Since the singular set has codimension at least two,
we obtain the adjunction formula
\[
\mathcal{K}_M=\mathcal{K}_{\F}\otimes \det
N^{*}_{\F}=\mathcal{K}_{\F}\otimes L^{\ast}\quad\mbox{Then}\quad
\mathcal{K}_{\F}=(L\otimes\mathcal{K}_M).
\]

We see that the holomorphic line bundle $L$ correspond to the
determinant line bundle of the normal sheaf $\det(N_{\F})$.

After the dimension and the codimension of a holomorphic foliation,
the main discrete invariant, is the
\textit{first Chern class of the normal sheaf} $N_{\F}$, and denoted by $c_1(N_{\F})$.
We define
\[
\F\in\F_k(M,L)\quad \mbox{ then } \quad c_1(N_{\F}):=c_1(L)\in H^2(M,\Z)
\]

For foliations on the projective space, we also define the
\textit{degree of} $\F$, it is the number
$\deg(\F)=c_1(N_{\F})-{\cod}(\F)-1$.

\begin{definition}\label{factor}
Let $\Om\in H^0(M,\Om^{n-k}(L))$ be an integrable $(n-k)$--form.
\begin{enumerate}
\item A
holomorphic (meromorphic) \textit{integrating factor} is a
holomorphic (meromorphic) section $\varphi$ of the line bundle $L$
such that
\[\eta= \frac{\Om}{\varphi}\]
is an integrable closed meromorphic form.

\item A meromorphic function $f:M\dasharrow \Po^1$ is a \textit{meromorphic first integral} if
\[ df \wedge \Om=0\]
\end{enumerate}
\end{definition}

Observe that if $\varphi$ and $\psi$ are two linearly independent integrating
factors of the foliation represented by $\Om$, then the rational function $f=\varphi/\psi$
is a meromorphic first integral of $\Om$.

\begin{definition}
The Kupka singular set of an integrable $(n-k)$--form $\Om$ is defined by
\[
 K(\F)=K(\Om):=\{p\in M | \Om(p)=0, d\Om(p)\neq 0\}\subset S(\F).
\]
\end{definition}

The singular and the Kupka sets, are well defined, they don't
depend on the $(n-k)$--form
$\Om$ representing the foliation.

The Kupka set has the \textit{local product property} (see Theorem
\ref{mede}): Given a foliation $\F\in\F_k(M,L),$ and a connected
component $K\subset K(\F)\subset M$ of its Kupka set, there exists
a unique germ of a foliation at $0\in \C^{n-k+1}$, represented by
a $(n-k)$--form $\eta\in \Om^{n-k}_0(\C^{n-k+1})$ or equivalently
a germ of a holomorphic vector field $\mathbf{X}\in
\Theta_0(\C^{n-k+1})$, and related by the formula
\[
\eta=\imath_{\mathbf{X}}dx_0\wedge\cdots \wedge dx_{n-k},
\]
such that for each point $x\in K$, there exists a pair $(U,\phi)$ where
$U\subset M$ is an open set and $\phi:(U,x)\to(\C^{n-k+1},0)$ is a
submersion such that
\begin{enumerate}
  \item $\phi^{-1}(0)=K\cap U$, therefore $K$ is a smooth submanifold and
    $\mbox{dim}(K)=\mbox{dim}(\F)-1$.
  \item The $(n-k)$--form in $\phi^{*}\eta$ defines $\F|_{U}$.
  \item The Kupka set is persistent under deformations of the foliation.
\end{enumerate}

Locally, the foliation looks like $\mathcal{G}_{\mathbf{X}}\times
\C^{k-1}$, the product of a singular foliation by curves in
$\C^{n-k+1}$ defined by the solutions of $\mathbf{X}$ times a non
singular holomorphic foliation of dimension $k-1$.

The set
\[
\K_k(M,L)=\{ \F\in\F_k(M,L)| K(\F) \supset K \mbox{  a compact
  connected component}\},
\]
is an open subset of $\F_k(M,L)$ and hence, its closure is a union
of irreducible components.

An important problem in the theory of foliations, is the
classification of the foliations $\K_k(M,L)\subset\F_k(M,L),\quad
n\geq3$. The codimension--one case i.e., when $1<\dim(\F)=n-1$,
for foliations on the projective space, was first considered by
Cerveau and Lins Neto in \cite{cerveau}. They proved the following
Theorem:

\begin{theorem}
\label{CL}
Let $\F\in \K_{n-1}(n,c),\quad n\geq3$. If $K(\F)$ is a complete intersection
then the leaves of $\F$ are the fibers of a rational map
$\Phi:\Po^n\dasharrow\Po^1.$
\end{theorem}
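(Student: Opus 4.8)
The plan is to reconstruct, from the two homogeneous equations cutting out $K$, the transversal type of $\F$ along $K$, to show that this type is linear and resonant with eigenvalue ratio dictated by the two degrees, and then to promote the resulting local first integral to a global rational first integral by the identity principle. Set up: represent $\F$ by a twisted $1$--form $\Om\in H^{0}(\Po^{n},\Om^{1}_{\Po^{n}}(c))$ with $\Om\wedge d\Om=0$, and write $K=\{F_{1}=F_{2}=0\}$ with $F_{i}$ reduced homogeneous of degree $d_{i}$, say $d_{1}\le d_{2}$. By the local product property (Theorem \ref{mede}) there is a tubular neighbourhood $T$ of $K$ on which $\F$ is $\mathcal{G}_{\mathbf{X}}\times\C^{n-2}$ for a germ $\mathbf{X}\in\Theta_{0}(\C^{2})$ with isolated singularity at $0$; since $d\Om\neq0$ on $K$ one has $\operatorname{tr}D\mathbf{X}(0)\neq0$, so the eigenvalues $\lambda_{1},\lambda_{2}$ of $D\mathbf{X}(0)$ are not both zero, and since $K$ is connected the conjugacy class of $\mathbf{X}$ is the same at every point of $K$.

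The core of the argument, and the step I expect to be the main obstacle, is to identify this transversal type with a diagonal linear one in the coordinates $(F_{1},F_{2})$. Because $K$ is a complete intersection the conormal sequence gives $N^{*}_{K/\Po^{n}}=\mathcal{O}_{K}(-d_{1})\oplus\mathcal{O}_{K}(-d_{2})$, the summands generated by $dF_{1},dF_{2}$; dually $N_{K/\Po^{n}}=E_{1}\oplus E_{2}$ with $E_{i}=\ker(dF_{j}|_{K})\cong\mathcal{O}_{K}(d_{i})$ for $\{i,j\}=\{1,2\}$. On the other hand $D\mathbf{X}(0)$, viewed as an endomorphism of $N_{K/\Po^{n}}$, has constant eigenvalues along $K$ and, when $\lambda_{1}\neq\lambda_{2}$, splits $N_{K/\Po^{n}}$ into eigen--sub--line--bundles. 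The decisive point is that these two splittings coincide: this is where the complete--intersection hypothesis is genuinely used. When $d_{1}\neq d_{2}$ the only rank--one direct summands of $\mathcal{O}_{K}(d_{1})\oplus\mathcal{O}_{K}(d_{2})$ are the two factors (a nonzero map between $\mathcal{O}_{K}(d_{1})$ and $\mathcal{O}_{K}(d_{2})$ has zeros), which pins the eigen--directions to $E_{1},E_{2}$ and forces $V_{i}:=\{F_{i}=0\}$ to be invariant along $K$; equivalently $F_{1}|_{\Sigma},F_{2}|_{\Sigma}$ are two transverse smooth separatrices of $\mathbf{X}$ on a $2$--dimensional transversal $\Sigma$. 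The degenerate case $d_{1}=d_{2}$ requires a separate treatment, which ends with the same conclusion. A Baum--Bott/Camacho--Sad--type index computation along the Kupka component then reads off $\lambda_{1}:\lambda_{2}=d_{1}:d_{2}$ from $\deg E_{1},\deg E_{2}$ (consistency check: the model $\Om_{0}=\tfrac{d_{2}}{g}F_{2}\,dF_{1}-\tfrac{d_{1}}{g}F_{1}\,dF_{2}$ has exactly this ratio). Finally, a germ in the Poincar\'e domain with the resonant ratio $d_{1}:d_{2}$ possessing two transverse analytic separatrices carries no Poincar\'e--Dulac resonant monomial: such a monomial $u^{m_{1}}v^{m_{2}}\partial_{u}$ is either numerically impossible or destroys one of the two separatrices. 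Hence $\mathbf{X}$ is analytically linearizable and, in the coordinates $(F_{1}|_{\Sigma},F_{2}|_{\Sigma})$, equals $\lambda_{1}u\partial_{u}+\lambda_{2}v\partial_{v}$ with $\lambda_{1}:\lambda_{2}=d_{1}:d_{2}$.

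To conclude, put $g=\gcd(d_{1},d_{2})$ and
\[
\Phi:=\frac{F_{1}^{\,d_{2}/g}}{F_{2}^{\,d_{1}/g}}:\Po^{n}\dasharrow\Po^{1},
\]
a well defined rational map, numerator and denominator having the same degree $d_{1}d_{2}/g$. In the local product chart near $K$ the functions $u=F_{1},v=F_{2}$ form part of a coordinate system and, up to a non--vanishing holomorphic factor, $\Om$ equals $\imath_{\mathbf{X}}(du\wedge dv)=\lambda_{1}u\,dv-\lambda_{2}v\,du$; a one--line computation using $d_{2}\lambda_{1}=d_{1}\lambda_{2}$ gives $d\Phi\wedge\Om\equiv0$ on $T$. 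But $d\Phi\wedge\Om$ is a rational $2$--form on $\Po^{n}$, so vanishing on the nonempty open set $T$ forces $d\Phi\wedge\Om\equiv0$ on all of $\Po^{n}$. Thus $\Phi$ is a rational first integral of $\F$ and the leaves of $\F$ are the fibres of $\Phi:\Po^{n}\dasharrow\Po^{1}$. The two comparatively routine steps are the local reduction via Theorem \ref{mede} and this final globalization; essentially all the difficulty is concentrated in the second paragraph, in matching the algebraic splitting of $N_{K/\Po^{n}}$ coming from the complete intersection with the dynamical splitting coming from the transversal type (and in disposing of the resonant non--linearizable local models).
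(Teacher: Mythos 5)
You should first note that the paper does not prove this statement at all: it is quoted from Cerveau--Lins Neto \cite{cerveau}, and the mechanism behind it is the one the paper itself uses for Theorem \ref{maintheo}, namely: pin down the transversal type, build a closed meromorphic normal form (integrating factor / local first integral) on a neighborhood of $K$, and extend it to $\Po^n$ by the Rossi--Barth theorem (Theorem \ref{extension}). Your argument has a genuine gap exactly at the step you yourself call decisive. First, the bundle-theoretic claim is false as stated: $\mathcal{O}_K(d_1)\oplus\mathcal{O}_K(d_2)$ has rank-one direct summands other than the two factors whenever $H^0(K,\mathcal{O}_K(d_2-d_1))\neq 0$ (for any section $s$ the image of $(\mathrm{id},s):\mathcal{O}_K(d_1)\to\mathcal{O}_K(d_1)\oplus\mathcal{O}_K(d_2)$ is a split summand, with complement the second factor); what the degree argument really pins is only the eigen-subbundle isomorphic to $\mathcal{O}_K(d_2)$, which must lie in $\ker(dF_1|_K)$. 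Second, and more seriously, even exact agreement of the eigendirections with $\ker dF_1,\ker dF_2$ along $K$ is a first-order tangency statement and does not make the hypersurfaces $\{F_i=0\}$ invariant; invariance is an infinite-order condition, and it is what your two-separatrix claim, your exclusion of Poincar\'e--Dulac monomials, and your final formula all rest on.

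Here is a concrete counterexample to these intermediate steps. Take $\deg F_1=1$, $\deg F_2=2$, fix $c\neq 0$, and let $\F$ be the foliation with rational first integral $F_1^{2}/(F_2+cF_1^{2})$. Then $K=\{F_1=F_2=0\}$ is a compact Kupka component, it is the complete intersection cut out by the given $F_1,F_2$, and the transversal type is $x\partial_x+2y\partial_y$; but $\{F_2=0\}$ is not invariant (only $\{F_2+cF_1^2=0\}$ is, and it is tangent to $\{F_2=0\}$ along $K$, which is why no first-order argument can distinguish them), $F_1^{2}/F_2$ is not a first integral, and $d\Phi\wedge\Om\not\equiv 0$ on any neighborhood of $K$, so your identity-principle globalization never gets started --- even though the conclusion of the theorem holds, with $F_2$ replaced by the resonant correction $F_2+cF_1^2$. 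This is precisely the phenomenon the paper must handle in its proof of Theorem \ref{maintheo}, where the components of the fibration are $g_{\ell+k}=f_{\ell+k}+c_{\ell+k}f_0^{m_0^{\ell+k}}\cdots f_\ell^{m_\ell^{\ell+k}}$, i.e.\ the given equations corrected by resonant monomials. The eigenvalue-ratio step $\lambda_1:\lambda_2=d_1:d_2$ is fine (it is the Chern-class relation (\ref{relchern}), from \cite{CSoares}), but to complete the proof you cannot guess the first integral from the equations of $K$: you must construct the closed meromorphic form near $K$ from the local normal form (as in Theorems \ref{resonatFF} and \ref{intfact}), extend it by Theorem \ref{extension}, and only then identify its components; the degenerate cases you dismiss in one line (a vanishing eigenvalue, $d_1=d_2$, the radial case, and the non-linearizable Poincar\'e--Dulac type) also require the arguments of \cite{cerveau}, since your exclusion of them relies on the unproven invariance of $\{F_1=0\}$ and $\{F_2=0\}$.
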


Furthermore, Cerveau and Lins Neto conjectured that $K(\F)$
is always a complete intersection. Recently, Brunella \cite{brunella}
and Calvo-Andrade \cite{joseom, omegar}, have given a positive answer
about this conjecture. As consequence of these results, there is a complete
classification of these foliations on the projective space
$\Po^n,\quad n\geq3$.

\subsection{Statement of the results}\label{results}
In this paper, we study the set of foliations $\K_k(M,L)\subset
\F_k(M,L)$ of holomorphic foliations of dimension $1<k\leq n-1$
with a compact connected Kupka component in its singular set.
We mainly assume that $1<k\leq n-2$.

Given a foliation $\F\in\K_k(M,L)$ with Kupka set $K=K(\F)$. It is natural
to ask about the relations between the invariants of the embedding
$\jmath:K\hookrightarrow M$, mainly,
the total Chern class of the normal bundle $c(\nu(K,M))\in
H^{\ast}(K,\mathbb{Z})$, and the invariants of the foliation.

For instance, Proposition \ref{subcanonical} said that
$\wedge^{k+1}\nu(K,M)=L|_K$, and in particular
\[
c_1(\nu(K,M))=j^{\ast}c_1(N_{\F})\in H^2(K,\mathbb{Z}),
\]
which implies that, if $L$ is an ample line bundle, the embedding
$\jmath:K\hookrightarrow M$ is not topologically trivial and at
least, the first Chern class of the normal bundle, is the
restriction of a non zero class on $H^2(M,\Z)$.

On the other hand, since the transversal vector field $\textbf{X}$ has
$Div(\mathbf{X})_0\neq0$, it has not trivial linear part
$\mathbf{X}_1$. The Jordan decomposition of $\mathbf{X}_1$,
implies that the normal bundle of a Kupka component, decomposes in
a direct sum of subbundles, or it is indecomposable and
projectively flat (Theorem \ref{normal}). As we will see later, this
property, imposes serious restrictions on the geometry of the
embedding of Kupka set.

In section \ref{sec:BB}, we prove a deeper relation between the
transversal vector field, the first Chern class of the normal
sheaf of the foliation and the fundamental class of the Kupka set
$\jmath(K)\subset M$.

The Theorem \ref{degK} of section \ref{sec:BB}, states that, for a foliation $\F\in
\K_k(M,L)$ with a compact connected component $K$ and transversal
type $\mathbf{X}$ the fundamental class of $K\subset M$ satisfies
the formula
\[
[K]\cdot \R(c_1(J_{\mathbf{X}})^{\cod(\F)+1},\mathbf{X},0)=
c_1(N_{\F})^{\cod(\F)+1}.
\]

In the equation above,
$\R(c_1(J_{\mathbf{X}})^{\cod(\F)+1},\mathbf{X},0)$ is the
Baum--Bott residue of the transversal vector field and $[K]$
denotes the fundamental class of $K\subset M$.

Let $\F$ be a foliation with a Kupka component with linear
transversal type
\[
\mathbf{X}(x_0,\dots,x_{n-k})=\sum_{i=0}^{n-k}\lambda_i
x_i\frac{\partial}{\partial x_i},\quad \mbox{with}\quad
\lambda_i\in\mathbb{N}\quad  \lambda_i\neq\lambda_j.
\]

In section \ref{sec:Kupka}, we introduce the notion
of \textit{resonance}. If the transversal vector field is linear,
and diagonal with positive integers eigenvalues, we find a
meromorphic integrating factor $G$ for the transversal model
$\Om=\imath_{\mathbf{X}}dx_0\wedge\dots \wedge dx_{n-k}$.
We get the normal
form:

\[
\frac{\Om}{G}= \left(\sum_{j=0}^{\ell}(-1)^i\lambda_i
\frac{dx_0}{x_0}\wedge\dots \widehat{\frac{dx_i}{x_i}}\cdots\wedge
\frac{dx_{\ell}}{x_{\ell}}\right)\wedge
d\varphi_{\ell+1}\wedge\cdots\wedge
d\varphi_{n-k},
\]
and a meromorphic function $F:U\dashrightarrow \Po^{n-k},$ whose fibers are the leaves.

The subset $\Lambda_{NR}=\{\lambda_0<\dots<\lambda_{\ell}\}$ is
the maximal subset of non--resonant eigenvalues. On the other hand,
for all $j=1,\dots,n-k-\ell$, the set of eigenvalues
$\Lambda_{R}=\{\lambda_{\ell+1}<\dots <\lambda_{n-k} \}$, we may
find resonance relations of the type
\[
\lambda_{\ell+j}=\lambda_0\cdot m^{\ell+j}_0+\cdots
+\lambda_{\ell}\cdot m^{\ell+j}_{\ell}.
\]

We define the polynomial functions
\[
h^{\ell+j}(x_0,\dots,x_{\ell})=x_0^{m_0^{\ell+j}}\cdots
x_{\ell}^{m^{\ell+j}_{\ell}},\quad
\varphi_{\ell+j}=\frac{x_{\ell+j}}{h^{\ell+j}(x_0,\dots
,x_{\ell})}.
\]

With a more carefull work, the above formal may be written, on a neighborhood of the Kupka set as
\begin{equation}\label{FNVK}
\frac{\Om}{\widetilde{G}}= \left(\sum_{j=0}^{\ell}(-1)^i\lambda_i
\frac{dx_0}{x_0}\wedge\dots \widehat{\frac{dx_i}{x_i}}\cdots\wedge
\frac{dx_{\ell}}{x_{\ell}}\right)
\wedge \theta_{\ell+1}\wedge \cdots\wedge \theta_{n-k},\qquad d\theta_j=0.
\end{equation}

With the use of (\ref{FNVK}), we prove the
following generalization of the Theorem (\cite[Theorem
B]{cerveau}).

\begin{maintheorem}\label{maintheo}
Let $\F\in \K_k(n,c)$ be a holomorphic foliation with
$S_{k-1}(\F)=K(\F)$ and transversal type
\[
\mathbf{X}=\sum^{n-k}_{i=0}\lambda_{i}x_{i}\frac{\partial}{\partial{x}_{i}},
\quad\lambda_i\in\mathbb{N}\quad
\lambda_{i}\neq \lambda_j, \mbox{ for all } i\neq j.
\]
Then the leaves of the foliation are the fibers of a rational
fibration and it is represented by a $(n-k)$--form of the type
\[
\Om(z_0,\dots,z_n)=\sum^{n-k}_{i=0}(-1)^i \lambda_{i}\cdot
f_{i}(z_0,\dots,z_n) df_{0}
\wedge\ldots\wedge\widehat{df_{i}}\wedge\ldots\wedge df_{n-k}.
\]
\end{maintheorem}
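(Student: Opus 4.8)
The plan is to promote the local normal form (\ref{FNVK}) to a global statement by a cohomological/analytic-continuation argument in the spirit of \cite{cerveau}. First I would fix a foliation $\F\in\K_k(n,c)$ as in the statement and a compact connected Kupka component $K$. Since $S_{k-1}(\F)=K(\F)$, the singular set of $\F$ away from $K$ has codimension $\geq 2$ in the leaf-direction, so constructions carried out on a neighborhood of $K$ will extend across the rest of $\Po^n$ once we control the monodromy. By Theorem \ref{mede} (local product property) together with the resonance analysis of section \ref{sec:Kupka}, on a tubular neighborhood $V$ of $K$ the foliation is given by $\Om/\widetilde G$ as in (\ref{FNVK}); in particular the $n-k$ closed meromorphic $1$-forms
\[
\omega_0 = \sum_{i=0}^{\ell}(-1)^i\lambda_i\,\frac{dx_0}{x_0}\wedge\cdots\widehat{\frac{dx_i}{x_i}}\cdots\wedge\frac{dx_\ell}{x_\ell},\qquad \theta_{\ell+1},\ \dots,\ \theta_{n-k}
\]
are defined on $V$, and their wedge defines $\F|_V$ up to a unit. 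The first step is therefore to show each of these forms is the restriction of a global closed rational $1$-form on $\Po^n$ with poles along hypersurfaces. For the logarithmic piece $\omega_0$ this is the classical argument: the local hypersurfaces $\{x_i=0\}$ are pieces of the separatrices of the transversal type, they glue (by uniqueness in Theorem \ref{mede}) to global algebraic hypersurfaces $\{f_i=0\}\subset\Po^n$ of suitable degrees, and the residues $\lambda_i$ are constant along each; hence $\omega_0$ extends to $\sum_i(-1)^i\lambda_i\,\frac{df_0}{f_0}\wedge\cdots\widehat{\frac{df_i}{f_i}}\cdots\wedge\frac{df_\ell}{f_\ell}$, and after clearing denominators one recognizes the asserted form $\Om=\sum_{i=0}^{n-k}(-1)^i\lambda_i f_i\,df_0\wedge\cdots\widehat{df_i}\cdots\wedge df_{n-k}$ once all the factors are in place.

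Next I would handle the resonant directions $\theta_{\ell+1},\dots,\theta_{n-k}$. Locally $\theta_{\ell+j}$ is cohomologous to $d\varphi_{\ell+j}$ with $\varphi_{\ell+j}=x_{\ell+j}/h^{\ell+j}(x_0,\dots,x_\ell)$, a ratio of monomials in the separatrix coordinates. Using that the $\{x_i=0\}$ extend globally to $\{f_i=0\}$, the functions $\varphi_{\ell+j}$ extend to global rational functions $F_{\ell+j}=f_{\ell+j}/\prod_{i=0}^{\ell} f_i^{m_i^{\ell+j}}$ on $\Po^n$; the degree bookkeeping is exactly the resonance relation $\deg f_{\ell+j}=\sum_i m_i^{\ell+j}\deg f_i$, which is where the hypothesis $\lambda_i\in\mathbb N$ and the normalization of degrees are used. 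The closed form $\theta_{\ell+j}$ then differs from $dF_{\ell+j}$ by an exact meromorphic $1$-form on $V$; I would argue this correction also globalizes (it is exact, so controlled by $H^0$ of meromorphic functions with prescribed poles, i.e.\ rational functions on $\Po^n$), and can be absorbed into $F_{\ell+j}$. Collecting everything, $\F$ is defined on all of $\Po^n$ by $\frac{df_0}{f_0}\wedge\cdots\widehat{\frac{df_i}{f_i}}\cdots\wedge\frac{df_\ell}{f_\ell}\wedge dF_{\ell+1}\wedge\cdots\wedge dF_{n-k}$, hence by a map $\Phi=(f_0:\cdots:f_\ell:\text{(resonant ratios)})$ to $\Po^{n-k}$ whose generic fibers are the leaves; rewriting this wedge with a common denominator yields precisely the closed formula for $\Om$ in the statement, with $f_0,\dots,f_{n-k}$ the homogenizations of the separatrices.

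The main obstacle, as in the codimension-one case of \cite{cerveau}, is the \emph{global extension across $S(\F)\setminus K$}: showing that the forms $\omega_0,\theta_j$ (equivalently, the separatrix hypersurfaces $\{x_i=0\}$) continue analytically from the neighborhood $V$ of $K$ to algebraic objects on all of $\Po^n$ without picking up multivaluedness. Concretely one must rule out nontrivial monodromy of the local first integral around the non-Kupka part of the singular scheme; here the hypothesis $S_{k-1}(\F)=K(\F)$ is essential, forcing the complement $\Po^n\setminus(K\cup\text{poles})$ to have the right (co)homological finiteness so that the locally defined closed rational $1$-forms, whose periods are the integers $\lambda_i$, extend with the same residue data. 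This is also the point where one invokes the persistence/stability in item (3) of Theorem \ref{mede} and the $\check H^1$-vanishing arguments underlying the existence of the global integrating factor, paralleling Brunella \cite{brunella} and Calvo-Andrade \cite{joseom, omegar}. Once extension is secured, the identification with a rational fibration $\Phi:\Po^n\dashrightarrow\Po^{n-k}$ and the explicit form of $\Om$ are a direct computation, so I would present that last part only schematically.
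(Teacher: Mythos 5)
Your plan follows the paper's own proof: the local resonant normal form near the Kupka component (Theorems \ref{intfact} and \ref{NFK}), extension of the integrating factor, the components of the invariant divisor and the resonant rational functions $\varphi^{\ell+j}$ to all of $\Po^n$, and identification of the foliation with the rational fibration $[f_0^{\lambda_0}:\dots:g_{n-k}^{\lambda_{n-k}}]:\Po^n\dashrightarrow\Po^{n-k}$. The one step you flag as the main obstacle (monodromy and continuation across $S(\F)\setminus K$) is in the paper handled at once by the Barth--Rossi extension theorem (Theorem \ref{extension}): any meromorphic function on a connected neighborhood of the compact, connected, positive--dimensional set $K$ extends to a rational function on $\Po^n$, so no monodromy or $\check{H}^1$ argument is needed.
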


We say that a foliation $\F$ has \textit{small codimension} when
\begin{equation}
  \label{eq:dimension}
   \dim(\F)\geq \cod(\F)+2.
\end{equation}

We will see that this condition, we have strong restrictions on
the geometry of the Kupka set and the possibilities of the
transversal type.

Let $\F\in\K_k(n,c)$ be a foliation with small codimension, by Bezout's theorem, $S_{k-1}(\F)=K(\F)=K$
and it is compact and connected. Also, if the transversal type is linear with different eigenvalues and
$n\geq5$, according with \cite{Bad}, the inclusion map induces an isomorphism
$\jmath^{\ast}:\mbox{Pic}(\Po^n)\stackrel{\thicksim}{\rightarrow}
\mbox{Pic}(K) \simeq \mathbb{Z}$. We obtain the following result.

\begin{ThmSmall}\label{smallcod}
Let $\F\in \K_k(n,c)$ be a foliation of small codimension and
linear transversal type
\[
\mathbf{X}=\sum_{i=0}^k \lambda_i x_i \frac{\partial}{\partial
x_i}\qquad (\lambda_i- \lambda_j)\neq 0\neq \lambda_j\quad \mbox{for all}\quad i\neq
j,
\]
then $\lambda_i\in \mathbb{N}$ and $\F$ is a rational fibration.
\end{ThmSmall}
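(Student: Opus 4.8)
The plan is to deduce the statement from Theorem~\ref{maintheo}. All that has to be shown is that, after multiplying the defining $(n-k)$--form $\Om$ (equivalently the transversal vector field $\mathbf X$) by a suitable non--zero constant, the eigenvalues $\lambda_0,\dots,\lambda_{n-k}$ become pairwise distinct \emph{positive integers}: indeed, in the small codimension range $S_{k-1}(\F)=K(\F)=K$ holds by Bezout and $K$ is compact and connected, so the remaining hypotheses of Theorem~\ref{maintheo} are already satisfied, and its conclusion is exactly the assertion of the corollary.

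First I would collect the structural information. Because the eigenvalues are pairwise distinct, Theorem~\ref{normal} forces the decomposable alternative, so the normal bundle of the Kupka component splits,
\[
\nu(K,\Po^{n})=\bigoplus_{i=0}^{n-k}L_{i},
\]
where $L_i$ is the rank one eigen--subbundle attached to $\lambda_i$. Since $n\geq 5$ and the transversal type is linear with distinct eigenvalues, \cite{Bad} gives $\jmath^{\ast}\colon\operatorname{Pic}(\Po^{n})\stackrel{\sim}{\longrightarrow}\operatorname{Pic}(K)\simeq\Z$, so each $L_i=\mathcal O_{K}(d_i)$ for some $d_i\in\Z$, and by Proposition~\ref{subcanonical}, $\sum_{i=0}^{n-k}d_i=c_1(N_{\F})=c$.

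The heart of the proof is to globalize the separatrices of the transversal model. On a neighbourhood $V$ of $K$ the foliation is the pull--back of $\imath_{\mathbf X}dx_0\wedge\cdots\wedge dx_{n-k}$, and each coordinate hyperplane $\{x_i=0\}$ is an $\F$--invariant germ along $K$ whose normal direction is exactly $L_i$. I would prove that this germ is the germ along $K$ of a global hypersurface $H_i=V(F_i)\subset\Po^{n}$ with $\deg F_i=d_i$; consequently $K=\bigcap_{i=0}^{n-k}H_i=V(F_0,\dots,F_{n-k})$ is a complete intersection, and in particular $d_i\geq 1$ for all $i$ (the $d_i$ are then also pairwise distinct, since the $\lambda_i$ are). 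This is the step I expect to be the main obstacle: one has to promote the local $\F$--invariant hypersurfaces to global algebraic ones. Here the small codimension hypothesis, the decomposition $\nu(K,\Po^n)=\bigoplus\mathcal O_K(d_i)$ and the isomorphism $\operatorname{Pic}(K)\simeq\Z$ all enter --- concretely, by producing for each $i$ a section $F_i\in H^{0}\!\big(\Po^{n},\mathcal I_{K}\otimes\mathcal O_{n}(d_i)\big)$ that restricts on $K$ to the divisor cut by the given eigen--direction, using ampleness of $\mathcal O_K(1)$ together with the cohomological consequences of the small codimension condition. (This is precisely the complete intersection property of $K$, the analogue for Kupka components of Theorem~\ref{CL}.)

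Granting this, the conclusion follows. Using the normal form \eqref{FNVK} on $V$, where $\{F_i=0\}=\{x_i=0\}$, the forms $\theta_j$ together with the logarithmic part of \eqref{FNVK} assemble, by the identity theorem for the global meromorphic form representing $\F$, into the identity
\[
\Om=\sum_{i=0}^{n-k}(-1)^{i}\lambda_{i}\,F_{i}\,dF_{0}\wedge\cdots\wedge\widehat{dF_{i}}\wedge\cdots\wedge dF_{n-k}
\]
on all of $\Po^{n}$. Since $\Om$ represents a foliation of $\Po^n$, its interior product with the radial vector field $R$ must vanish; computing $\imath_R\Om$ and using $\imath_{R}dF_i=d_iF_i$ yields $\lambda_i d_j=\lambda_j d_i$ for every $i\neq j$. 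Hence $\lambda_i=\mu\,d_i$ for a non--zero constant $\mu$, and since $\F$ is unchanged if $\mathbf X$ is rescaled by $\mu^{-1}$, we may take the transversal vector field to be $\sum_{i=0}^{n-k}d_i\,x_i\,\partial/\partial x_i$, with $d_0,\dots,d_{n-k}$ pairwise distinct positive integers. Theorem~\ref{maintheo} now applies and shows that $\F$ is a rational fibration, represented by the form above, with $\Phi\colon\Po^{n}\dashrightarrow\Po^{n-k}$ given in homogeneous coordinates by the monomials $F_0^{a_0},\dots,F_{n-k}^{a_{n-k}}$ for suitable positive integers $a_i$.
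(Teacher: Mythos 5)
Your reduction to Theorem~\ref{maintheo} is the right idea, and the preliminary steps (Bezout giving $S_{k-1}(\F)=K$, the splitting $\nu(K,\Po^n)=\bigoplus L_i$ from Theorem~\ref{normal}, and $\operatorname{Pic}(K)\simeq\Z$ from \cite{Bad}, so $L_i=\mathcal O_K(d_i)$ with $\sum d_i=c$) match the paper. But the step you yourself flag as the main obstacle is a genuine gap, and it is exactly the step you never prove: promoting the local $\F$--invariant hypersurfaces $\{x_i=0\}$ along $K$ to global hypersurfaces $F_i\in H^0(\Po^n,\mathcal O_n(d_i))$, concluding that $K$ is a complete intersection, and then asserting the global identity $\Om=\sum(-1)^i\lambda_iF_i\,dF_0\wedge\cdots\wedge\widehat{dF_i}\wedge\cdots\wedge dF_{n-k}$ \emph{before} Theorem~\ref{maintheo} is available. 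Gesturing at ``ampleness of $\mathcal O_K(1)$ together with the cohomological consequences of the small codimension condition'' is not an argument; producing a section of $\mathcal I_K\otimes\mathcal O_n(d_i)$ whose zero divisor agrees with the prescribed eigen--direction along $K$ is essentially the complete--intersection/globalization problem that is the hard content of the whole theory (it is what the Rossi--Barth extension theorem, Theorem~\ref{extension}, is used for inside the proof of Theorem~\ref{maintheo}). Since your only route to the key relation $\lambda_id_j=\lambda_jd_i$ is the contraction $\imath_R\Om=0$ applied to this unproven global formula, the integrality of the (rescaled) eigenvalues --- the whole point of the corollary --- is left unestablished.

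The paper gets integrality without any globalization at this stage: by \cite{kupka} the Chern classes of the eigen--subbundles satisfy the relations \eqref{relchern}, $\lambda_j\,c_1(L_i)-\lambda_i\,c_1(L_j)=0$ in $H^2(K,\Z)$, purely from the local structure of the transversal vector field along $K$; combined with \eqref{chernnormal} this gives $c_1(L_i)=\bigl(\lambda_i/\sum\lambda_j\bigr)\jmath^{*}c_1(N_{\F})$, and then $\operatorname{Pic}(K)\simeq\Z$ forces $d_i=\lambda_i c/\sum\lambda_j\in\Z$, i.e.\ the $\lambda_i$ are proportional to integers (the Baum--Bott computation after Theorem~\ref{degK} supplies $\deg K=\prod d_i$). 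Only then is Theorem~\ref{maintheo} invoked, and the extension of the invariant divisor and of the integrating factor to $\Po^n$ is carried out there via Theorem~\ref{extension}. If you want to salvage your write--up with minimal change, replace your global--hypersurface step by the local relation \eqref{relchern} (which is exactly your $\lambda_id_j=\lambda_jd_i$, but proved on $K$ rather than on $\Po^n$); as written, your argument proves the corollary only modulo a statement that is not easier than the corollary itself.
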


We also have a version for radial transversal type (\ref{RadLoc}).

\begin{Thm}\label{Radial}
Let $\F\in \K_k(n,c)$ with a Kupka set $K$ and radial transversal
type. If $K$ is a complete intersection, then $\F$ is a rational
fibration of the type
\[[f_0:\dots:f_{n-k}]:\Po^n\to\Po^{n-k},\quad deg(f_j)=\frac{c}{n-k+1}\]
\end{Thm}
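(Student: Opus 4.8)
The plan is to mimic the argument used for Theorem \ref{maintheo}, adapting it to the radial transversal type and exploiting the complete intersection hypothesis directly rather than deducing it. First I would write the radial transversal model explicitly: the Kupka component $K$ has transversal type the radial vector field $\mathbf{X}=\sum_{i=0}^{n-k}x_i\frac{\partial}{\partial x_i}$ on $\C^{n-k+1}$, so by the local product property (Theorem \ref{mede}) the germ of $\F$ along $K$ is, up to the non-singular factor $\C^{k-1}$, the foliation $\Om_0=\imath_{\mathbf{X}}dx_0\wedge\cdots\wedge dx_{n-k}$. The crucial observation is that this transversal form admits the obvious holomorphic first integrals given by the ratios $x_i/x_j$, equivalently the foliation is the pull-back of the identity foliation on $\Po^{n-k}$ by the projection $\C^{n-k+1}\setminus\{0\}\to\Po^{n-k}$; in particular $\Om_0/\varphi$ is closed for a suitable section $\varphi$ (here all eigenvalues equal, so there is no resonance issue and the formula analogous to (\ref{FNVK}) is immediate with $\ell=n-k$ and no $\theta_j$ factors).

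Next I would use the complete intersection hypothesis to globalize. Since $K\subset\Po^n$ is a complete intersection of codimension $n-k+1$, write $K=\{f_0=\cdots=f_{n-k}=0\}$ for homogeneous polynomials $f_j$; the subcanonical/adjunction relation $\wedge^{n-k+1}\nu(K,\Po^n)=L|_K$ from Proposition \ref{subcanonical} together with the complete intersection structure forces $\sum\deg(f_j)=c=c_1(N_{\F})$, and the radial (all-eigenvalues-equal, hence $\mathrm{SL}$-symmetric) nature of the transversal type should force all $\deg(f_j)$ equal, giving $\deg(f_j)=c/(n-k+1)$. The key step is then to show that the closed meromorphic $(n-k)$-form $\omega=\sum_{i=0}^{n-k}(-1)^i f_i\,df_0\wedge\cdots\wedge\widehat{df_i}\wedge\cdots\wedge df_{n-k}$ (which is, up to constant, $d(f_1/f_0)\wedge\cdots\wedge d(f_{n-k}/f_0)$ cleared of denominators) agrees with the given foliation in a neighborhood of $K$. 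This is a local-to-global matching: near $K$ the two forms define the same foliation by the transversal model computation above, so they differ by a meromorphic function; one then argues this function extends to a holomorphic (in fact constant, after the Picard/Noether--Lefschetz input $\mathrm{Pic}(K)\simeq\Z$ cited via \cite{Bad}) nowhere-zero unit on $\Po^n$, hence $\F$ is globally represented by $\omega$, i.e. $\F$ is the rational fibration $[f_0:\cdots:f_{n-k}]:\Po^n\dasharrow\Po^{n-k}$.

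The main obstacle I expect is the extension/globalization step: passing from a meromorphic first integral defined only on a neighborhood of the compact Kupka component $K$ to one defined on all of $\Po^n$. This is exactly where the analogous proofs (Cerveau--Lins Neto \cite{cerveau}, Brunella \cite{brunella}, Calvo-Andrade \cite{joseom,omegar}) do real work — typically via a Hartogs-type extension combined with a bound on the polar divisor coming from the degree, or via a holonomy/monodromy argument showing the local first integral has no monodromy around $K$ and then extends along the leaves which accumulate on $K$. In the present higher-codimension setting one must also check that the foliated holonomy group along the components of $K$ is trivial (or finite abelian), so that the multivalued integral $f_i/f_0$ is genuinely single-valued; the complete intersection hypothesis is what rules out the resonance-type monodromy that obstructs this. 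I would handle this by first establishing triviality of the holonomy of the transversal foliation (immediate for the radial model, since it is a linear foliation with a global first integral), then using the persistence of the Kupka set under deformation together with the identification $\jmath^*:\mathrm{Pic}(\Po^n)\xrightarrow{\sim}\mathrm{Pic}(K)$ to conclude that the integrating factor $\varphi$ is a global section of $\mathcal O_n(c)$, finishing the proof.
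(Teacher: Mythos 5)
There is a genuine gap, and it sits exactly where you yourself place ``the main obstacle''. Your plan transplants the integrating--factor/normal--form machinery of Section \ref{sec:Kupka} (Theorems \ref{resonatFF}, \ref{intfact}, \ref{NFK}) to the radial case, asserting that ``the formula analogous to (\ref{FNVK}) is immediate'' because there are no resonances. That machinery is not available here: its gluing step uses crucially that the eigenvalues are pairwise distinct, so that the hyperplanes $\{x_j=0\}$ are the \emph{only} smooth invariant hypersurfaces of the transversal model and therefore patch into a canonical invariant divisor along $K$ (this is what yields $x^j_{\alpha}=g^j_{\alpha\beta}x^j_{\beta}$ and lets the local closed forms agree up to constants). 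For the radial field every hyperplane through the origin is invariant, the local first integrals $x_i/x_j$ depend on the choice of linear coordinates in each Kupka chart, and on overlaps they are related only by projective transformations; hence neither the invariant divisor, nor the integrating factor, nor a germ of the fibration near $K$ is canonically defined, and your local data need not glue. This is precisely why the paper abandons the normal--form route in the radial case: Theorem \ref{RadLoc} blows up along $K$, observes that the strict transform is transverse to the exceptional divisor $\Po(\nu(K,M))$ and defines a flat projective connection, i.e.\ a projective transverse structure near $K$ encoded by a representation $\pi_1(K)\to\mathbf{PGL}(n-k,\C)$; the proof of Theorem \ref{Radial} then extends the associated Maurer--Cartan meromorphic forms from $U$ to all of $\Po^n$ and uses simple connectedness of $\Po^n$ to trivialize the flat $\Po^{n-k}$--bundle, so the developing section is the rational fibration. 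Your proposal has no substitute for this step: the ``triviality of the holonomy'' you call immediate concerns the wrong object (the holonomy of the local model, not the monodromy $\pi_1(K)\to\mathbf{PGL}(n-k,\C)$ coming from the gluing of Kupka charts, which is exactly what could make $f_i/f_j$ multivalued near $K$), and the Picard input you invoke via \cite{Bad} (Theorem \ref{NBSM}) requires a decomposable normal bundle and the dimension hypotheses of the small--codimension corollary, neither of which holds in the radial setting, where $\nu(K,\Po^n)$ is projectively flat and typically indecomposable.

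By contrast, the part you only gesture at --- that all $\deg f_j$ equal $c/(n-k+1)$ --- can be made precise with tools already in the paper: subcanonicity (Proposition \ref{subcanonical}) gives $\sum_j\deg f_j=c$ for the complete intersection $K=\{f_0=\cdots=f_{n-k}=0\}$, while the residue computation following Theorem \ref{degK} gives $\deg K=\prod_j\deg f_j=\bigl(c/(n-k+1)\bigr)^{n-k+1}$ for radial transversal type, and the equality case of the arithmetic--geometric mean inequality forces all $\deg f_j$ equal. But this only fixes the numerical data; it does not produce the identification of $\F$ with the fibration $[f_0:\cdots:f_{n-k}]$ near $K$, which remains the unproved core of your argument and is what the paper's transversally projective structure argument actually supplies.
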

\vglue 10pt

Finally, in section (\ref{sec:Noint}), we have an application to
codimension one non integrable distributions $\mathcal{D}$.

Let $\omega$ be a germ of  $1$-form on $(\mathbb{C}^n,0)$.  We
denote by
\[ (d\omega)^s= \overbrace{d\omega \wedge \cdots \wedge d\omega}^{\mbox{s
times}}.
\]

Let $\omega\in H^0(M, \Om_M^1(L))$. We define the \emph{class of the
foliation} induced by $\omega$ to be the integer $r$ for which
generically
\[
\omega\wedge (d\omega)^{r-1}\neq 0, \quad  \omega\wedge
(d\omega)^{r}\equiv 0.
\]

The Kupka set of the distribution $\mathcal{D}$ induced by $\omega\in H^0(M,
\Om_M^1(L))$ is defined by
\[
K(\mathcal{D})=\{p\in M;\ \omega(p)=0, \ (d\omega)^{r}(p)\neq 0 \}\subset Sing(\mathcal{D}).
\]

\begin{nointthm}
Let $\mathcal{D}$ be a codimension one distribution on $\Po^n$ of class $r$,
given by a $1$--form $\omega \in H^0(\Po^n, \Om_{\Po^n}^1(d+2))$
such that $\cod Sing( (d\omega)^r)\ge 3$. Suppose that the Kupka
component of $\F$   has transversal type
\[
\sum_{i=0}^{r-1}(x_idx_{i+r}- x_{i+r}dx_{i}).
\]

Then $\mathcal{D}$ is induced, in homogenous coordinates, by the $1$-form
\[
\sum_{i=0}^{r-1}(f_idf_{i+r}- f_{i+r}df_{i}).
\]
\end{nointthm}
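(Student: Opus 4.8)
The plan is to reduce the non‑integrable distribution $\mathcal D$ to the integrable situation already handled by the Main Theorem and Theorem~\ref{Radial}, by exploiting the following observation: if $\mathcal D$ is induced by $\omega\in H^0(\Po^n,\Om^1(d+2))$ of class $r$, then $\Om:=(d\omega)^r$ (up to a constant) is a global section of $\Om^{2r}$ twisted by the appropriate line bundle, it is integrable in the sense of Section~\ref{defs} because it is (generically) a wedge of the $2r$ closed $1$‑forms $dx_i,dx_{i+r}$ coming from the transversal normal form, and its zero locus is exactly $Sing((d\omega)^r)$, which by hypothesis has codimension $\ge 3$. Thus $\Om$ defines a holomorphic foliation $\F$ of codimension $2r$ on $\Po^n$ whose transversal type is the linear symplectic field $\sum_{i=0}^{r-1}(x_i\,\partial/\partial x_{i+r}-x_{i+r}\,\partial/\partial x_i)$ dualised, i.e. the radial–rotational model. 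First I would make this correspondence precise: check that $d\Om$ does not vanish on $K(\mathcal D)$, so that $K(\mathcal D)$ is a Kupka component of $\F$, and compute the twisting line bundle to be $\mathcal O_n(c)$ with $c=r(d+2)$ or the value forced by the adjunction formula of Section~\ref{defs}.

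The second step is to identify the transversal type of $\F$ along $K(\mathcal D)$. Contracting the $(n-k)$–form $\imath_{\mathbf X}dx_0\wedge\cdots\wedge dx_{2r-1}$ with the rotational field $\mathbf X=\sum_{i=0}^{r-1}(x_i\partial_{x_{i+r}}-x_{i+r}\partial_{x_i})$ and computing its exterior derivative, one sees that this field is diagonalisable over $\C$ with purely imaginary eigenvalues $\pm i$, each of multiplicity $r$, so that after the standard complex change of coordinates $u_j=x_j+ i x_{j+r}$, $v_j=x_j- i x_{j+r}$ it becomes the diagonal field $i\sum u_j\partial_{u_j}-i\sum v_j\partial_{v_j}$. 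This is, up to scaling, a diagonal linear field with eigenvalues $\lambda_j\in\{1,\dots\}$ after clearing the factor $i$; more to the point, it has an obvious meromorphic first integral, namely the map sending a point to the ratios of the monomials $u_jv_j$, equivalently the quadratic forms $q_j=x_j^2+x_{j+r}^2$ (or $x_jx_{j+r}$). Hence the transversal model already admits an explicit meromorphic first integral $F:\C^{2r}\dasharrow \Po^{r-1}$ (or $\Po^{2r-1}$), and in particular a meromorphic integrating factor. I would therefore verify that the rotational model falls under the hypotheses of the \emph{radial}/diagonalisable analysis of Section~\ref{sec:Kupka} (after the linear change of variables), or, if the eigenvalue multiplicities obstruct direct application of the Main Theorem, run the Baum–Bott residue computation of Theorem~\ref{degK} for this specific $\mathbf X$ to show that $K(\mathcal D)$ is forced to be a complete intersection, and then invoke Theorem~\ref{Radial}-type reasoning.

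The third step is the globalisation. Once $\F$ (the codimension‑$2r$ foliation defined by $(d\omega)^r$) is shown to be a rational fibration, it is represented in homogeneous coordinates by $\sum(-1)^i\lambda_i f_i\,df_0\wedge\cdots\widehat{df_i}\cdots\wedge df_{2r-1}$ for homogeneous $f_0,\dots,f_{2r-1}$, exactly as in the Main Theorem. The content of the last step is to descend from this $2r$‑form back to the $1$‑form $\omega$: one must recognise the $f_i$'s as (pulled‑back) coordinates in which the rotational structure is visible, group them in pairs $(f_i,f_{i+r})$ matching the transversal normal form, and conclude that $\omega$ itself — which a priori is only known up to the exterior ideal it generates — is, up to a multiplicative meromorphic, then holomorphic, factor, the $1$‑form $\sum_{i=0}^{r-1}(f_i\,df_{i+r}-f_{i+r}\,df_i)$. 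Here one uses that $\omega$ and this candidate $1$‑form have the same associated distribution $\ker$ on the complement of $Sing$, that this complement has codimension $\ge 3$ in $\Po^n$ (hence is simply connected and the two forms differ by a nowhere‑zero holomorphic function on it, which extends by Hartogs), and a degree count $\deg\omega=d+2$ pinning the $f_i$ to degree $(d+2)/?$.

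The main obstacle I expect is precisely this last descent: passing from control of the integrable $2r$‑form $(d\omega)^r$ (or equivalently of the foliation it defines) back to control of $\omega$ modulo nothing, since $\omega\mapsto (d\omega)^r$ is far from injective and a priori $\omega$ could be modified by a term in the differential ideal without changing $(d\omega)^r$. Handling this requires showing that, in the normal‑form coordinates $f_i$, \emph{any} $1$‑form with the prescribed class $r$, the prescribed Kupka transversal type, and defining the same foliation by $(d\omega)^r$ must coincide with $\sum(f_i\,df_{i+r}-f_{i+r}\,df_i)$ up to scalar — essentially a rigidity statement for the linear rotational model that then propagates globally thanks to $\cod Sing((d\omega)^r)\ge 3$. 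A secondary technical point is the eigenvalue‑multiplicity issue noted above: because the rotational field has eigenvalues of multiplicity $r$ rather than being "different eigenvalues", the Main Theorem cannot be quoted verbatim, and one has to either re‑derive the integrating‑factor formula \eqref{FNVK} in the presence of these multiplicities, using the explicit quadratic first integrals $x_j^2+x_{j+r}^2$, or argue via the complete‑intersection route of Theorem~\ref{Radial}.
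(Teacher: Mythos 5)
Your reduction step starts from the wrong integrable object, and this is a genuine gap rather than a cosmetic one. You propose to take $\Om=(d\omega)^r$ as the integrable form whose foliation has $K(\mathcal D)$ as a Kupka component. But $(d\omega)^r$ is closed, so $d\Om\equiv 0$ and its Kupka set is empty; worse, by the very definition of $K(\mathcal D)$ one has $(d\omega)^r(p)\neq 0$ at every $p\in K(\mathcal D)$, so $K(\mathcal D)$ is not even contained in the zero locus of $\Om$ -- near $K$ the form $(d\omega)^r$ defines a \emph{nonsingular} foliation. The object the paper uses instead is the $(2r-1)$--form $\Theta=\omega\wedge(d\omega)^{r-1}$: it is integrable, vanishes along $K$, and $d\Theta=(d\omega)^r\neq 0$ there, so $K$ is a Kupka component of $\Theta$. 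Moreover, since the transversal model of $\omega$ is $\omega_0=\imath_R\bigl(\sum_{i=0}^{r-1}dx_i\wedge dx_{i+r}\bigr)$ with $R$ the radial field of $\C^{2r}$, one gets $\Theta_0\propto\imath_R\left(dx_0\wedge\cdots\wedge dx_{2r-1}\right)$, i.e.\ \emph{radial} transversal type. This also dissolves your second--step worry about the eigenvalues $\pm i$ of the rotational field and their multiplicity $r$: one never invokes Theorem \ref{maintheo} (which indeed requires distinct positive integer eigenvalues), only Theorem \ref{Radial}, which gives $\Theta=\sum_{i=0}^{2r-1}(-1)^if_i\,df_0\wedge\cdots\wedge\widehat{df_i}\wedge\cdots\wedge df_{2r-1}$.

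The second gap is the descent from the foliation back to $\omega$, which you correctly single out as the main obstacle but leave unresolved (a ``rigidity statement'' is invoked, not proved). This is exactly where the paper's two key ingredients enter, and without them your outline cannot close. From the normal form above, $(d\omega)^r=d\Theta=2r\,df_0\wedge\cdots\wedge df_{2r-1}$ is globally decomposable, and since $\cod Sing((d\omega)^r)\ge 3$, Cerveau's singular Darboux theorem \cite{cerveau-darboux} yields $d\omega=\sum_{i=0}^{r-1}df_i\wedge df_{i+r}$. Then $\omega$ is recovered \emph{exactly}, not merely up to the differential ideal: since $\imath_R\omega=0$ in homogeneous coordinates, Euler's identity gives $(d+2)\omega=\imath_R d\omega=\sum_{i=0}^{r-1}(m_{i+r}f_i\,df_{i+r}-m_i f_{i+r}\,df_i)$ with $m_i=\deg f_i$, and comparison with the prescribed transversal type forces all $m_i$ equal, which is the stated normal form. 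Your alternative plan -- comparing kernels of $\omega$ and of the candidate $1$--form on the complement of the singular set -- has nothing to compare against until $d\omega$ itself has been pinned down, precisely the step supplied by the singular Darboux theorem together with the radial contraction.
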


\vglue 10pt
\section{The Kupka set.}
\label{sec:Kupka}

In this section, we  study the main properties of the
Kupka set. First, we study the behavior on a neighborhood of the Kupka set.
As we will see, the geometric properties of the Kupka
set, implies a strong rigidity on the global behavior of the
foliation.

\subsection{Basic properties of the Kupka set}

\begin{definition} Let $\Om\in H^0(M,\Om^{n-k}(L))$ be an
integrable section. The Kupka singular set is defined by
\[
 K(\F)=K(\Om):=\{p\in M\,|\,\Om(p)=0, d\Om(p)\neq 0\}.
\]
\end{definition}

The Kupka set is well defined. Let $\Om$ an integrable $(n-k)$
form andy let $\Om'$ be another integrable form which define the
same foliation, then
\[
   \Om=h\cdot \Om',\quad h\in\mathcal{O}^{\ast}
\]
then
$\Om(x)=h(x)\cdot \Om'(x)=0$  if and only if
$\Om(x)=\Om'(x)=0,$ and for any $x\in S(\Om)=S(\Om')$ we have $d\Om(x)=h(x)\cdot d\Om'(x)$.
Therefore $d\Om(x)\neq0$ if and only if $d\Om'(x)\neq0$.

Now, we begin with the local product structure.

\begin{theorem}
[Medeiros \cite{medeiros}]\label{mede}
Let $\F\in\F_k(M,L)$ be a $k$--dimensional holomorphic foliation on $M$ and let
$K\subset K(\F)$ be a connected component. Then there exists
a germ of a holomorphic vector field $\mathbf{X}\in
\Theta_0(\C^{n-k+1})$ called the \textit{transversal type}
and open covering by charts $\{(U_\alpha,\phi_\alpha)\}$ of a neighborhood
of $K$ such that
\begin{enumerate}
\item $K(\F)$ is a $k-1$--dimensional submanifold of $M$. Namely
\begin{eqnarray*}
  \phi_{\alpha}: U_\alpha& \rightarrow &\C^{n-k+1}\times\C^{k-1} \\
 u & \mapsto &(x_{\alpha}(u),y_{\alpha}(u)),\qquad U_\alpha\cap
 K=\phi_\alpha^{-1}(0,y_\alpha)
\end{eqnarray*}
\item The $(n-k)$--form
\[\Om_{\alpha}=\phi^{*}_{\alpha}(\imath_{\mathbf{X}}dx_{0}\wedge\ldots\wedge
dx_{n-k}).\]
represents the foliation on $U_{\alpha}$.
\item $K(\F)$ is persistent under
variation of $\Om$; namely, for $p\in K(\F)$ with transversal type
$\eta$ andy for any foliation $\F^{\prime}$ sufficiently
close to $\F$, there is a holomorphic p--form $\eta^{\prime}$, close
to $\eta$ and defined on a neighborhood of $0\in\C^{n-k+1}$ and a
submersion $\phi^{\prime}$ close to $\phi$, such that $\F^{\prime}$ is
defined by $(\phi^{\prime})^{\ast}\eta^{\prime}$ on a neighborhood of
$p$.
\end{enumerate}
\end{theorem}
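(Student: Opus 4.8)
The plan is to prove the statement as a purely local result around a point $p\in K$, reconstructing the product structure from the closed form $d\Om$ and then reading off the submanifold, normal-form and persistence assertions. First I would extract the algebraic consequence of integrability. On the dense open set $\{\Om\neq 0\}$ I may write $\Om=\eta_1\wedge\cdots\wedge\eta_{n-k}$, and the relations $d\eta_i=\sum_j\theta_{ij}\wedge\eta_j$ give, after a short computation, $d\Om=\theta\wedge\Om$ with $\theta=\sum_i\theta_{ii}$. Thus, on $\{\Om\neq 0\}$, the form $d\Om$ is a wedge of $n-k+1$ one--forms and hence satisfies the Pl\"ucker relations; being holomorphic identities, these persist across $S(\F)$, so $d\Om$ is locally decomposable on a full neighbourhood $U$ of $p$. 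Since $d\Om(p)\neq 0$ it is a nonzero decomposable $(n-k+1)$--form, so $\ker d\Om=\{v:\imath_v d\Om=0\}$ has constant dimension $k-1$ near $p$; that is, $d\Om$ is closed, locally decomposable and of constant rank $n-k+1$.

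Next I would apply the holomorphic Frobenius--Darboux theorem for closed decomposable forms of constant rank: it yields holomorphic functions $x_0,\dots,x_{n-k}$ with $dx_0\wedge\cdots\wedge dx_{n-k}(p)\neq 0$ and $d\Om=dx_0\wedge\cdots\wedge dx_{n-k}$. Set $\phi=(x_0,\dots,x_{n-k})\colon U\to\C^{n-k+1}$, a submersion whose fibres are the leaves of $\ker d\Om$. To see that $\Om$ descends through $\phi$, note that for $v\in\ker\Om\cap\ker\theta$ one has $\imath_v d\Om=\imath_v(\theta\wedge\Om)=\theta(v)\,\Om=0$, so $\ker\Om\cap\ker\theta\subseteq\ker d\Om$; both spaces have dimension $k-1$ (the functional $\theta|_{\ker\Om}$ cannot vanish, else the $k$--dimensional $\ker\Om$ would lie in the $(k-1)$--dimensional $\ker d\Om$), hence they coincide and $\ker d\Om\subseteq\ker\Om$. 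Therefore $\imath_v\Om=0$ for $v\in\ker d\Om$ on $\{\Om\neq 0\}$, and by continuity on all of $U$; combined with $\imath_v d\Om=0$ and Cartan's formula $L_v\Om=\imath_v d\Om+d\imath_v\Om=0$, the form $\Om$ is horizontal and invariant along the fibres of $\phi$. Hence $\Om=\phi^{*}\eta$ for a holomorphic $(n-k)$--form $\eta$ near $0\in\C^{n-k+1}$, and the contraction isomorphism $\Theta\leftrightarrow\Om^{n-k}$ in dimension $n-k+1$ lets me write $\eta=\imath_{\mathbf{X}}\,dx_0\wedge\cdots\wedge dx_{n-k}$, defining the transversal type $\mathbf{X}$; the Kupka conditions become $\mathbf{X}(0)=0$ and $\operatorname{div}\mathbf{X}(0)\neq 0$. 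This establishes item (2).

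The hard part is the smoothness assertion (1). Locally $K\cap U=\{\Om=0\}=\phi^{-1}(\{\eta=0\})=\phi^{-1}(\sing(\mathbf{X}))$, so, $\phi$ being a submersion, $K$ is a smooth $(k-1)$--dimensional submanifold with $T_pK=\ker d\Om(p)$ exactly when $\sing(\mathbf{X})=\{0\}$ is an isolated singularity. This is the genuine obstacle: a decomposable nonvanishing $d\Om$ and $\operatorname{div}\mathbf{X}(0)\neq 0$ alone do not force the transversal zero locus to be a point, as $\mathbf{X}=x_0\partial_{x_0}+x_1\partial_{x_1}$ on $\C^3$ shows. I expect to settle it using the standing hypothesis that $S(\F)$ has codimension $\ge 2$ together with the nondegeneracy packaged into the notion of a Kupka component (the transversal type being a genuine isolated singularity), to rule out a positive--dimensional zero locus of $\mathbf{X}$; granting it, $K=\phi^{-1}(0)$ and the germ is the product $\mathcal{G}_{\mathbf{X}}\times\C^{k-1}$.

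Finally, for persistence (3) I would observe that the whole construction rested only on $d\Om\neq 0$ near $K$ and on the Darboux normalization, both of which are open and stable conditions: for $\F'$ close to $\F$ the form $d\Om'$ is close to $d\Om$, still nonvanishing, closed and locally decomposable near $K$, so the same normalization produces a submersion $\phi'$ and a transversal form $\eta'$ close to $\phi$ and $\eta$ with $\F'=(\phi')^{*}\eta'$, with all data depending holomorphically on $\F'$. Once the product structure $\Om=\phi^{*}\eta$ is secured, items (1)--(3) follow, the isolated--singularity step of the previous paragraph being the only real difficulty.
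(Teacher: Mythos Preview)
The paper does not prove this theorem; it is quoted from Medeiros and used as input throughout. Your argument for the local product structure is essentially Medeiros' own: deduce $d\Om=\theta\wedge\Om$ from integrability, note that $d\Om$ is closed, nonvanishing and locally decomposable near $p$, straighten it via a Darboux lemma to $dx_0\wedge\cdots\wedge dx_{n-k}$, and check that $\Om$ is basic for the resulting submersion. Your verification that $\ker d\Om\subseteq\ker\Om$ and $L_v\Om=0$ for fibrewise $v$ is correct and is the core of the matter; items (2) and (3) follow as you indicate.

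Your reservation about item (1) is justified, and the counterexample you give is valid: for $n-k\geq 2$ the field $\mathbf{X}=x_0\partial_{x_0}+x_1\partial_{x_1}$ on $\C^{n-k+1}$ produces an integrable $\eta=(x_0\,dx_1-x_1\,dx_0)\wedge dx_2\wedge\cdots\wedge dx_{n-k}$ with $d\eta$ nowhere zero and $\{\eta=0\}=\{x_0=x_1=0\}$ of codimension $2$, so the product foliation on $\C^n$ has Kupka set of dimension $n-2>k-1$. The hypothesis $\cod\,S(\F)\geq 2$ does not rescue this (your example satisfies it), and the parenthetical you wrote---``the transversal type being a genuine isolated singularity''---is not a consequence of the stated data but precisely the missing hypothesis. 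In the paper this causes no harm, since every transversal type actually used (linear diagonal with nonzero eigenvalues, or radial) has $\mathbf{X}^{-1}(0)=\{0\}$; but you should record the isolated--zero condition as an explicit assumption rather than hope to derive it.
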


\begin{remark}
Let us explain the point (3) of the Theorem (\ref{mede}) for the
families $\K_k(M,L)$ on a compact complex manifold $M$.

Let $\{\F_t\}_{t\in\mathcal{T}} $ a family of foliations such that
$\F_0\in \K_k(M,L)$. Then as a consequence of (3), we get a family
of compact submanifolds $\jmath_t:K_t\hookrightarrow M$ such that
$\jmath(K_t)=K(\F_t)$ and having transversal type $\mathbf{X}_t$.

The possible transversal type, is a germ at zero of holomorphic vector field,
it belongs to a space of infinite dimension, but $\K_k(M,L)$ has
finite dimension.

For instance, for the rational components
$\mathcal{R}(n:d_0,\dots,d_{n-k})$ defined in (\ref{ex:1}), the transversal type
remain fixed for any family $\F_t\in \mathcal{R}(n:d_0,\dots,d_{n-k})$
\[ \mathbf{X}_t=\sum_{i=0}^{n-k}d_j x_j\frac{\partial}{\partial x_j}.\]
\end{remark}

Given a foliation $\F$ represented by a section $\Om \in
H^0(M,\Om^{n-k}(L))$. Recall that the stalk at $x\in M$ of the
\textit{tangent sheaf} $(T_{\F})_x\subset\Theta_x$,
consist of the sheaf of the germs of holomorphic vector fields that vanishes
the form $\Om$.
\[ (T_{\F})_x=\{ \mathbf{Y}\in\Theta_x| \imath_{\mathbf{Y}}\Om=0 \}.\]

It follows that there is a coordinate system around each point of
the Kupka set
$(x,y)\in\C^{n-k+1}\times\C^{k-1}\simeq\C^n$, the form $(n-k)$--form
\[
\Om=\Om(x,y)=\imath_{\textbf{X}}dx_0\wedge \dots\wedge dx_{n-k}
\]
defines the  foliation. Then, the holomorphic vector fields
\[
\left\{
{\mathbf{X}}(x),\frac{\partial}{\partial
  y_1},\dots,\frac{\partial}{\partial y_{k-1}}
\right\}
\]
generate the tangent sheaf of the foliation as a $\mathcal{O}$ module
around $x\in K$, therefore, it is locally free.

\subsection{Geometry of the Kupka set.} We assume
that $K\subset K(\F)$ is a compact and connected component

We say that a compact (not necessarily connected) submanifold
$X\hookrightarrow M$ is \textit{subcanonicaly embedded}, if its
canonical bundle extends to a line bundle defined on $M$. That is
\[
\mathcal{K}_X=L|_X\qquad \mbox{for some}\qquad L\in Pic(M)
\]

Let $\F$ be a foliation with a compact Kupka component $K$. Consider
now the family of $(n-k)$--forms
$\{\Om_{\alpha}=\phi_{\alpha}^{\ast}\Om\}$ as in the theorem
(\ref{mede}). Since $d\Om(p)\neq0$ at $p\in K(\Om)$, we have
\[
  d\Om_\alpha(p)  =
  \phi_{\alpha}^{\ast}d\Om(0,y_{\alpha})=Div(\mathbf{X})(0)\,dx_{\alpha}^{0}\wedge
  \cdots \wedge dx_{\alpha}^{n-k}\neq 0,
\]
 where
\[
 Div(\mathbf{X})(0)=\sum_{i=0}^{n-k}\frac{\partial X_i}{\partial x_i}(0)
\]

Now, we are in a position to prove the following geometric
property of the Kupka set.

\begin{proposition}\label{subcanonical} Let $\F\in\K_k(M,L)$ with a
  compact Kupka component $K(\F)=K$. Let $\jmath: K(\F) \hookrightarrow M$ be the
  inclusion map. Then
\begin{enumerate}
\item $\jmath:K(\F)\hookrightarrow M$ is subcanonicaly embedded.
\[
\wedge^{n-k+1}\nu(K(\F),M)=L|_{K(\F)},\qquad
\mathcal{K}_{K(\F)}=(\mathcal{K}_M\otimes L)|_{K(\F)}
\]

\item $c_1(\nu(K,M)=\jmath^{\ast}c_1(N_{\F}) \in H^2(K,\mathbb{Z})$.
\end{enumerate}
In particular, for a foliation on the projective space $\Po^n$,
$K$ is Fano if and only if $\dim(\F)
> \deg(\F).$
\end{proposition}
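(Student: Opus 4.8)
The plan is to work locally on a neighborhood of the compact Kupka component $K$ using the charts $(U_\alpha,\phi_\alpha)$ provided by Theorem \ref{mede}, and to identify the conormal data of $K$ with the restriction of the line bundle $L$. First I would observe that, in the chart $\phi_\alpha:U_\alpha\to \C^{n-k+1}\times\C^{k-1}$, the component $K\cap U_\alpha$ is exactly $\{x_\alpha^0=\cdots=x_\alpha^{n-k}=0\}$, so the functions $x_\alpha^0,\dots,x_\alpha^{n-k}$ are a local defining system for $K$ and their differentials $dx_\alpha^0,\dots,dx_\alpha^{n-k}$ give a local frame of the conormal bundle $\nu^*(K,M)$. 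Hence $dx_\alpha^0\wedge\cdots\wedge dx_\alpha^{n-k}$ is a local frame of $\wedge^{n-k+1}\nu^*(K,M)$, i.e. of $\det\nu^*(K,M)$. On the overlaps $U_{\alpha\beta}$, the foliation is represented by $\Om_\alpha=\phi_\alpha^*(\imath_{\mathbf X}dx_0\wedge\cdots\wedge dx_{n-k})$ on one side and by $\Om_\beta$ on the other, and by definition of a foliation $\Om_\alpha=\lambda_{\alpha\beta}\Om_\beta$, where $\{\lambda_{\alpha\beta}\}$ is the cocycle of $L$. The key computational step is the identity already displayed in the excerpt:
\[
 d\Om_\alpha(p)=Div(\mathbf X)(0)\,dx_\alpha^0\wedge\cdots\wedge dx_\alpha^{n-k},\qquad p\in K\cap U_\alpha,
\]
with $Div(\mathbf X)(0)\neq 0$ since $p$ is a Kupka point. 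Differentiating $\Om_\alpha=\lambda_{\alpha\beta}\Om_\beta$ and restricting to $K$ (where $\Om_\beta=0$) gives $d\Om_\alpha=\lambda_{\alpha\beta}\,d\Om_\beta$ along $K$; substituting the formula above and using $Div(\mathbf X)(0)\neq 0$ yields
\[
 dx_\alpha^0\wedge\cdots\wedge dx_\alpha^{n-k}\big|_K=\lambda_{\alpha\beta}\,dx_\beta^0\wedge\cdots\wedge dx_\beta^{n-k}\big|_K.
\]
This says precisely that the transition functions of $\det\nu^*(K,M)$ coincide with $\{\lambda_{\alpha\beta}|_K\}$, i.e. $\wedge^{n-k+1}\nu^*(K,M)=L^*|_K$, equivalently $\wedge^{n-k+1}\nu(K,M)=L|_K$. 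The second formula of part (1), $\mathcal K_K=(\mathcal K_M\otimes L)|_K$, then follows from the adjunction formula $\mathcal K_K=\mathcal K_M|_K\otimes\det\nu(K,M)$. Part (2) is the first Chern class version of the equality $\wedge^{n-k+1}\nu(K,M)=L|_K$, using $c_1(L)=c_1(N_{\F})$ and functoriality of $c_1$ under $\jmath^*$.

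For the final assertion, specialize to $M=\Po^n$, where $\mathcal K_{\Po^n}=\mathcal O(-n-1)$ and $N_{\F}=\mathcal O(c)$ with $c=c_1(N_{\F})=\deg(\F)+\cod(\F)+1=\deg(\F)+(n-k)+1$. Then part (1) gives $\mathcal K_K=\mathcal O(c-n-1)|_K$, and since $\jmath^*:\mathrm{Pic}(\Po^n)\to\mathrm{Pic}(K)$ sends the ample generator to an ample class (one may use that $K$ is connected of dimension $k-1\geq 1$, or invoke the Lefschetz-type statement quoted before Corollary \ref{smallcod}), $K$ is Fano exactly when $c-n-1<0$, i.e. $c<n+1$, i.e. $(\deg(\F)+n-k+1)<n+1$, which simplifies to $\deg(\F)<k=\dim(\F)$. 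Hence $K$ is Fano if and only if $\dim(\F)>\deg(\F)$.

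The main obstacle I anticipate is the gluing/differentiation step: one must be careful that the relation $d\Om_\alpha=\lambda_{\alpha\beta}\,d\Om_\beta$ holds only after restriction to $K$ (the extra term $d\lambda_{\alpha\beta}\wedge\Om_\beta$ vanishes there since $\Om_\beta|_K=0$), and that the nonvanishing of $Div(\mathbf X)(0)$ is genuinely used to cancel it and conclude the conormal cocycle equals $\{\lambda_{\alpha\beta}|_K\}$ rather than some multiple. Everything else is bookkeeping with Chern classes and the adjunction formula.
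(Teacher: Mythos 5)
Your proposal is correct and follows essentially the same route as the paper: differentiate the cocycle relation $\Om_{\alpha}=\lambda_{\alpha\beta}\Om_{\beta}$, restrict to $K$ where the term $d\lambda_{\alpha\beta}\wedge\Om_{\beta}$ dies, and use the nonvanishing of $d\Om_{\alpha}|_K$ (equivalently $Div(\mathbf{X})(0)\neq0$) to identify $\wedge^{n-k+1}\nu(K,M)^{\ast}\otimes L|_K$ as trivial, then conclude by adjunction and functoriality of $c_1$; you additionally spell out the Fano computation, which the paper leaves implicit, and it is correct. The only nitpick is a harmless frame-versus-coefficient convention slip in the phrase ``the transition functions of $\det\nu^{\ast}(K,M)$ coincide with $\{\lambda_{\alpha\beta}|_K\}$'' (the relation $e_{\alpha}=\lambda_{\alpha\beta}e_{\beta}$ on frames gives $\det\nu^{\ast}\cong L^{\ast}|_K$, as you in fact conclude).
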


\begin{proof} Denote by $K=K(\F)$ .
The cocycle condition
  $\Om_{\alpha}=\lambda_{\alpha\beta}\Om_{\beta},$ implies
\[
d\Om_{\alpha}=d\lambda_{\alpha\beta}\wedge
\Om_{\beta}+\lambda_{\alpha\beta}d\Om_{\beta}.
\]
Therefore, $d\Om_{\alpha}=\lambda_{\alpha\beta}d\Om_{\beta}$ in
$K\cap U_{\alpha\beta}$. It follows that $\{d\Om_{\alpha}|_K\}$
defines a never vanishing holomorphic section of the line bundle
\[
\wedge^{n-k+1}\nu(K,M)^{\ast}\otimes L|_K.
\]

Hence, this line bundle is trivial. This implies that
\[
\wedge^{n-k+1}\nu(K,M)\simeq  L|_K=\det(N_{\F})|_K.
\]

From the exact sequence of vector bundles over the Kupka set
\[
\mathbf{0}\rightarrow TK\rightarrow TM|_K \rightarrow
\nu(K,M)\rightarrow\mathbf{0},
\]
after taking the dual sequence and the exterior power we get

\[
\mathcal{K}_{M}|_K=\wedge^{n-k+1}\nu(K,M)^{\ast}\otimes
\mathcal{K}_K \Rightarrow \mathcal{K}_K=(\mathcal{K}_M\otimes L)|_K
\]

It follows from the definition of the first Chern class:
\[
c_1(\nu(K,M))=c_1(\wedge^{n-k+1}\nu(K,M))= \jmath^{\ast}c_1(L)
=\jmath^{\ast}c_1(N_{\F}).
\]
\end{proof}

We proceed with another proof of the first item of the theorem
above:

\begin{proof}
 We first observe that $(T_{\F})|_K=TK$. In fact, on the
coordinate system $(U,\phi)$ given by Theorem (\ref{mede}), the
tangent sheaf of the foliation, is generated by the vectors fields
\begin{eqnarray*}
T_{\F}(U) & = &\mathcal{O}_M(U)\cdot
\left\langle\frac{\partial}{\partial y_1},\dots,
\frac{\partial}{\partial y_{k-1}}
,\mathbf{X}(x)\right\rangle \\
 TK|_U & = & \mathcal{O}_K\cdot\left\langle\frac{\partial}{\partial
y_1},\dots, \frac{\partial}{\partial y_{k-1}} \right\rangle
\end{eqnarray*}

Now, we have that $\mathbf{X}(0)\equiv0$, this implies that
\[
\mathcal{O}_K \cdot T_{\F}=\mathcal{O}_K\cdot
\left\langle\frac{\partial}{\partial y_1},\dots,
\frac{\partial}{\partial y_{k-1}} ,\mathbf{X}(0)\right\rangle=
\mathcal{O}_K\cdot\left\langle\frac{\partial}{\partial y_1},\dots,
\frac{\partial}{\partial y_{k-1}} \right\rangle
\]

Therefore, the canonical bundle of the Kupka set is the
restriction of the canonical bundle of the foliation, that means
\[
\mathcal{K}_K=(\mathcal{K}_{\F})|_{K(\F)}=(L\otimes
\mathcal{K}_M)|_K,\quad\mbox{and}\quad
\wedge^{n-k+1}\nu(K,M)=L|_K.
\]
The second part follows in the same way.\end{proof}

Also, in the same coordinate system $(U,\phi)$, the normal bundle
of the Kupka set is generated in $V=U\cap K$ by
\[
\nu(K,M)|_V=\mathcal{O}_K(V) \cdot\left\langle \frac{\partial}{\partial x_0},
\dots,\frac{\partial}{\partial x_{n-k}}\right\rangle \simeq \mathcal{O}_K^{n-k+1}(V),
\]

The normal sheaf, is coherent and it is not locally free at the
singular set. It has the same generators, but it has the relation
given by the transversal vector field $\mathbf{X}$, because it is
tangent to the foliation. That is
\[
N_{\F}(U)=\frac{\mathcal{O}_M(U)\cdot\left\langle
\frac{\partial}{\partial x_0}, \dots,\frac{\partial}{\partial
x_{n-k}}\right\rangle}{\sum X_i(x)\frac{\partial}{\partial x_i}}
\]
explicitly, we have the exact sequence
\[
\mathcal{O}_M(U)\stackrel{\cdot
\mathbf{X}}{\longrightarrow}\mathcal{O}_M^{n-k+1}(U)\longrightarrow
N_{\F}(U)
\]

\begin{example}
Here we have some examples and applications of the above result.

\begin{enumerate}
  \item \label{tcubic} The Proposition (\ref{subcanonical}) andy the Gherardelli
   theorem, that states that any subcanonical and projectively normal curve of
   $\Po^3$ is a complete intersection, we have that $\Gamma:\Po^1\to\Po^3$, the rational
   normal curve of degree $3$, parameterized by
     \[
       \Gamma(s:t)=(s^3:s^2t:st^2:t^3),
     \]
   can't be the Kupka set of any codimension one foliation on
   $\Po^3$.

  \item Any projective space linearly embedded, appears as a Kupka set of a degree zero
   foliation. Namely the foliation are the fibers of a linear
   projection $\Po^n\dashrightarrow \Po^{n-k}$ andy the Kupka set is the set of
   indetermination points.

   On the other hand, any projective space $\Po^{k-1}$ can not be the
   Kupka set of a foliation $\F$ of dimension $k$ and degree
   $\deg(\F)\geq k$.
\end{enumerate}
\end{example}

\subsection{The normal bundle of the Kupka set.}
Now, We to show another relevant property of the normal
bundle of a Kupka component.

Let $\mathbf{X}=X_1+\dots$ be the transversal vector field of a
compact Kupka component. The linear part $X_1(x)\neq0$. After a
linear change of coordinates, we may assume that
\begin{equation}
  \label{eq:Jordan}
X_1(x)
= \begin{pmatrix}
J_{\lambda_1}&      0        & \dots    & 0     \\
     0       & J_{\lambda_2} & \dots    & 0     \\
   \vdots    & \ddots        & \ddots   &\vdots \\
     0       &      0        & \dots    & J_{\lambda_r}
\end{pmatrix}\cdot
\begin{pmatrix}
x_1 \\ x_2 \\ \vdots \\ x_r
\end{pmatrix}
\quad x_j\in \C^{n_j},\quad \sum_{j=1}^r n_j=n-k+1.
\end{equation}

We assume that $J_{\lambda_j}$ are the Jordan blocs with
$\lambda_i\neq \lambda_j$, but eventually, $J_{\lambda_i}$ could have
several Jordan subblocs.

The Jordan canonical form, induces a decomposition in direct sum
of invariant subspaces
\[ \C^{n-k+1}=\bigoplus_{i=1}^r V(\lambda_i),\quad dim(V_{\lambda_i})=n_i.\]

\begin{theorem}\label{normal}
Let $\F\in \K_k(M,L)$ with transversal type
$\mathbf{X}=X_1+\cdots$, assume that the linear part $X_1$ is in
Jordan canonical form as in \ref{eq:Jordan}.  Then the normal
bundle decomposes as a direct sum of the type
\[
\nu(K,M) =\bigoplus_{i=1}^r E(\lambda_i), \mbox{ with }
rk(E_{\lambda_i})=n_i \] or it is indecomposable and projectively
flat.
\end{theorem}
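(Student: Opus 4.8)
The plan is to transfer the Jordan-block decomposition of the transversal linear part $X_1$ into a decomposition of the normal bundle $\nu(K,M)$, exploiting the local product structure given by Theorem \ref{mede}. First I would work in the charts $(U_\alpha,\phi_\alpha)$ of Theorem \ref{mede}, where the foliation is represented by $\Om_\alpha=\phi_\alpha^{*}(\imath_{\mathbf{X}}dx_0\wedge\cdots\wedge dx_{n-k})$ and $K\cap U_\alpha=\{x_\alpha=0\}$. As recalled in the text, on each such chart $\nu(K,M)|_{U_\alpha\cap K}$ is freely generated by $\partial/\partial x^0_\alpha,\dots,\partial/\partial x^{n-k}_\alpha$. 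The key point is to understand how these frames transform on overlaps $U_{\alpha\beta}$: the coordinate change $\psi_{\alpha\beta}=\phi_\alpha\circ\phi_\beta^{-1}$ must send the germ of $\mathbf{X}$ (up to the unit $\lambda_{\alpha\beta}$) to itself, i.e. it conjugates the transversal vector field to a scalar multiple of itself. The hard part will be extracting from this conjugacy the statement that the linear part $D\psi_{\alpha\beta}(0)$, acting on the normal directions $x_\alpha$, must \emph{commute} (after rescaling) with the linear part $X_1$; this is where the hypothesis $\lambda_i\neq\lambda_j$ does its work, forcing the transition matrices to be block diagonal with respect to the eigenvalue decomposition $\C^{n-k+1}=\bigoplus_i V(\lambda_i)$.

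Concretely, I would argue as follows. Two germs of holomorphic vector fields with nontrivial diagonalizable-by-blocks linear parts that are conjugate by a biholomorphism $\psi$ fixing $0$ have conjugate linear parts: $D\psi(0)\,X_1\,D\psi(0)^{-1}=\mu\cdot X_1'$ for the relevant scalar $\mu$, and since here $\mathbf{X}$ is \emph{the same} germ in every chart (only rescaled by $\lambda_{\alpha\beta}$), we get $D\psi_{\alpha\beta}(0)$ conjugating $X_1$ to $\lambda_{\alpha\beta}(0)X_1$. Because the spectrum of $X_1$ is $\{\lambda_1,\dots,\lambda_r\}$ with the $\lambda_i$ pairwise distinct, the spectrum of $\lambda_{\alpha\beta}(0)X_1$ is $\{\lambda_{\alpha\beta}(0)\lambda_i\}$; matching spectra forces $\lambda_{\alpha\beta}(0)=1$ when $r\ge 2$ (two distinct nonzero numbers cannot be rescaled to the same pair unless the scalar is $1$), hence $D\psi_{\alpha\beta}(0)$ \emph{commutes} with $X_1$ and therefore preserves each generalized eigenspace $V(\lambda_i)$. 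Consequently the transition cocycle of $\nu(K,M)$, which is precisely $\{D\psi_{\alpha\beta}(0)|_{\{x=0\}}\}$ in the chosen frames, is block diagonal: it respects the splitting $\bigoplus_i V(\lambda_i)$. This yields the decomposition $\nu(K,M)=\bigoplus_{i=1}^r E(\lambda_i)$ with $\mathrm{rk}\,E(\lambda_i)=n_i$.

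The remaining case is $r=1$, i.e. $X_1$ has a single eigenvalue $\lambda$ (possibly with several Jordan subblocks). Then $\lambda_{\alpha\beta}(0)$ need no longer be $1$: matching spectra only gives $\lambda_{\alpha\beta}(0)=1$ automatically, but the subtlety is that $D\psi_{\alpha\beta}(0)$ must commute with $X_1=\lambda\,\mathrm{Id}+N$ where $N$ is nilpotent, so it only needs to commute with $N$. In the semisimple case $N=0$ the transition matrices are arbitrary in $GL(n-k+1,\C)$ up to a common scalar, which is exactly the statement that $\nu(K,M)$ is \emph{projectively flat} (its $PGL$-valued cocycle is locally constant); indecomposability then follows because any proper subbundle would force a proper invariant subspace stable under all transition matrices, contradicting that — after passing to the associated flat $PGL$-bundle on the simply-connected-or-not $K$ — the holonomy acts irreducibly, or more simply because a splitting would descend to a splitting of the ambient germ of foliation, which the single-eigenvalue hypothesis together with $d\Om(0)\ne 0$ excludes. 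When $N\ne 0$ one still gets a locally constant $PGL$-structure by the same conjugacy argument (the transition matrices commute with $N$ up to scalar, a closed condition defining a locally constant subgroup), giving projective flatness, and indecomposability follows from the single Jordan-eigenvalue structure preventing an invariant complement.

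I expect the main obstacle to be the rigorous passage from "the two germs are related by $\Om_\alpha=\lambda_{\alpha\beta}\Om_\beta$ under $\psi_{\alpha\beta}$" to the clean linear-algebra statement "$D\psi_{\alpha\beta}(0)$ conjugates $X_1$ to $\lambda_{\alpha\beta}(0)X_1$": one must check that the $(n-k)$-form relation, after applying the isomorphism $\imath_{(\cdot)}dx_0\wedge\cdots\wedge dx_{n-k}$, really does pin down the vector field up to the scalar $\lambda_{\alpha\beta}$ times $\det(D\psi_{\alpha\beta})$, and then track how that extra Jacobian determinant factor interacts with the linear parts — but on $K$ the relevant determinant restricted to the normal directions is exactly what must be absorbed, and the computation in Proposition \ref{subcanonical} of $\wedge^{n-k+1}\nu(K,M)=L|_K$ is the bookkeeping that makes this consistent. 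Once that translation is secured, the rest is the spectral-matching argument above.
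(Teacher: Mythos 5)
Your strategy is the same as the paper's: work in the Medeiros charts, note that the overlap maps conjugate the germ $\mathbf{X}$ to a unit multiple of itself, deduce that their Jacobians along $K$ conjugate $X_1$ to $a\,X_1$, and conclude that the normal-bundle cocycle respects the Jordan decomposition, the single-eigenvalue case being the projectively flat one. The splitting part of your argument is essentially the paper's, but two of the steps you add do not hold as stated. First, the parenthetical claim ``two distinct nonzero numbers cannot be rescaled to the same pair unless the scalar is $1$'' is false: the set $\{1,-1\}$ is preserved by $a=-1$, and any spectrum invariant under multiplication by a root of unity admits $a\neq 1$. What the conjugacy $D\psi(0)X_1D\psi(0)^{-1}=aX_1$ really gives is that multiplication by $a$ permutes the spectrum and $D\psi(0)$ maps the generalized eigenspace $V(\lambda_i)$ to $V(\lambda_i/a)$; to obtain subbundles $E(\lambda_i)$ labelled by the eigenvalues one must still exclude (or treat) a possible permutation of the blocks. (For the positive eigenvalues used later in the paper $a=1$ does follow, since $a\neq1$ would move the largest or smallest eigenvalue out of the set; the paper's proof is silent on this point, so here you are no worse off, but your stated justification is not valid in the generality of the theorem.)

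Second, and more seriously, your argument for projective flatness in the one-eigenvalue case is a non sequitur. When $X_1=\lambda\,\mathrm{Id}$ the commutation condition puts no constraint whatsoever on the transition matrices, and ``unconstrained in $GL(n-k+1,\C)$ up to a scalar'' is not the statement that the $PGL$-valued cocycle is locally constant; nothing in your argument produces a flat projective connection. Similarly, for $X_1=\lambda\,\mathrm{Id}+N$ with $N\neq0$ nilpotent, the centralizer of $N$ is a positive-dimensional subgroup, so ``a closed condition defining a locally constant subgroup'' is not correct either (in that case one only gets the invariant flag coming from the kernels of the powers of $N$). The mechanism that actually yields projective flatness in the diagonal one-eigenvalue case is the one the paper uses elsewhere (Theorem \ref{RadLoc}): the transversal vector field is linearizable (Poincar\'e domain, no resonances when all eigenvalues coincide and are nonzero), the local model is radial up to a scalar, and after blowing up along $K$ the strict transform of $\F$ is transverse to the exceptional divisor $\Po(\nu(K,M))$, hence defines a flat projective connection on that $\Po^{n-k}$-bundle. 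Your proof needs this (or an equivalent) argument in place of the cocycle count; the indecomposability discussion you sketch should likewise be tied to this flat structure rather than to the vacuous commutation condition.
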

\begin{proof}
Let $(U_{\alpha},\phi_{\alpha})$ be the charts of the Theorem
(\ref{mede}). The changes of coordinates
$\phi_{\alpha\beta}:=\phi_{\beta}\circ\phi_{\alpha}$ satisfy
$\phi_{\alpha\beta}^{\ast}\eta=\lambda\eta$ for some $\lambda\in
\mathcal{O}^{\ast}$ andy
$\eta=\imath_{\mathbf{X}}dx_0\wedge\dots\wedge dx_{n-k}$.

Put in coordinates
$\phi_{\alpha\beta}(x_{\beta},y_{\beta})=(x_{\alpha},y_{\alpha})$,
then for the linear part, we have
\[
\left(\frac{\partial \phi_{\alpha\beta} }{\partial x_{\beta}}\right)_{\ast}X_1=
\left(\frac{\partial \phi_{\alpha\beta} }{\partial
    x_{\beta}}\right)\cdot X_1 \cdot\left(
\frac{\partial \phi_{\alpha\beta} }{\partial x_{\beta}}\right)^{-1}= a
X_1\quad a\in\C^{\ast}.
\]
Then, the change of coordinates of the normal bundle preserve
the Jordan decomposition of the linear part.

Since a Jordan bloc has always an eigenvalue, the collection of
these eigenvalues provide a subbundle.

If the linear part is diagonal with only one eigenvalue, has not a
decomposition as a direct sum, but this case is projectively flat.
\end{proof}

\begin{remark}
The normal bundle
\[
\nu(K,M)=\bigoplus_{i=1}^r E_{\lambda_i}\Rightarrow c(\nu(K,M)=\sum
c_1(E_{\lambda_i})
\]

It would be interesting find relations between the Chern classes of
the vector bundles $E_{\lambda_i}$, we conjecture that
\[
\lambda_j n_i c_1(E_{\lambda_i}) = \lambda_i n_j
c_1(E_{\lambda_j})\quad n_i=\mbox{rank}(E(\lambda_i)),
\]
moreover, if $c_1(N_{\F})\neq0$ then
$J_{\lambda_i}=\lambda_i\mathbb{I}_{n_i\times n_i}$.

This holds for foliations of codimension 1 and 2 (\cite{kupka}).
\end{remark}
\begin{example}
Let $E \rightarrow M$ be an irreducible projectively flat vector
bundle. Let $L$ be a sufficiently holomorphic , ample line bundle
such that $E\otimes L$ has sufficiently many holomorphic sections.

Now, let $\sigma\in H^0(M,E\otimes L)$ be a section vanishing
transversely along a submanifold $K$. We consider the section
$\sigma$ as a meromorphic section on $\Po(E\otimes L)$, the
foliation is the pull--back of the flat connection form on
$\Po(E\otimes L)$.

\end{example}

Now, let $\F$ be a foliation with a compact Kupka component $K$.
Assume that the line bundle $L=\det(N_{\F})$ is ample and
transversal type of the Kupka component is the vector field
\[
\mathbf{X}(x_0,\dots,x_{n-k})=\sum_{i=0}^{n-k}(\lambda_i
x_{i}+\cdots )\frac{\partial}{\partial x_i}\qquad \lambda_i -
\lambda_j\neq0\neq \lambda_i \mbox{
  for all } i\neq j,
\]
in this case, we also have that $Div(\mathbf{X})_0=\sum
\lambda_i\neq0$.

In this situation, it is shown in \cite{kupka}, that the
eigendirections of the transversal vector field, defines
holomorphic line bundles $L_i$ over $K$ such that

\begin{equation}\label{chernnormal}
\nu(K,\Po^n)=\bigoplus_{i=0}^{n-k} L_i\Longrightarrow
c_1(\nu(K,\Po^n))=\sum c_1(L_i)=\jmath^{\ast}(c_1(N_{\F}))\neq0.
\end{equation}

On the other hand, the Chern classes $c_1(L_j)\in H^2
(K,\mathbb{Z})\quad j=0,\dots n-k$ satisfy the following
relations.

\begin{equation}\label{relchern}
\lambda_j\cdot c_1(L_i)- \lambda_i\cdot c_1(L_j)=0\in
H^2(K,\mathbb{Z}),\quad \forall\quad i,j=0,\dots n-k,
\end{equation}
it follows that the eigenvalues $(\lambda_0,\dots,\lambda_{n-k})$
of the linear part of the transversal type $\mathbf{X}$ may be
taken to be integers

A combination of the relations (\ref{chernnormal}) and
(\ref{relchern}) gives us that the Chern classes of the line
bundles $L_i$ are restriction of classes on the ambient manifold.
Namely

\[
\jmath^*(c_1(N_{\F}))=\sum_{i=0}^{n-k} c_1(L_i)=
  \left[\sum_{i=0}^{n-k}
    \left(\frac{\lambda_i}{\lambda_j}\right)\right]\cdot c_1(L_j),
\]
and then, we have
\begin{equation}
  \label{eq:chernclasses}
  H^2(K,\mathbb{Z})\ni
  c(L_i)=\frac{\lambda_i}{\sum_{i=0}^{n-k}\lambda_i} \jmath^{\ast}(c_1(N_{\F}))=
  \jmath^{\ast}\left(\frac{\lambda_i c_1(N_{\F})}{\sum_{i=0}^{n-k}\lambda_i}\right)
\end{equation}
We observe that the right side of the above equation, belongs to
$H^2(M,\mathbb{Q})$.

Now, assume that the line bundles $L_i$ are restriction of
holomorphic line bundles $\overline{L_i}\in Pic(M)$.

Then we, get a collection of $(n-k+1)$ integer classes
\[
c_i=\frac{\lambda_i c_1(N_{\F})}{\sum_{i=0}^{n-k}\lambda_i}\in
H^2(M, \mathbb{Z})\quad i=0,\dots,n-k
\]
such that,
\[
c_0 +\cdots +c_{n-k} = c_1(N_{\F}).
\]

We will find conditions for foliations on the projective
space, such that the line bundles on the Kupka set are
restrictions of line bundles on the projective space.

We give now other examples of submanifolds that can not
be the Kupka set of any foliation.

\begin{example}
Let
$\mathcal{S}_{p,q}:\Po^{p}\times\Po^q\hookrightarrow\Po^{(p+1)(q+1)-1}$
be the Segre embedding. We will see that for any
$c\in\mathbb{Z},$ the image
$\mathcal{S}_{d-1,1}(\Po^{d-1}\times\Po^1)=\Sigma_d\subset
\Po^{2d-1}$ cannot be the Kupka set of any foliation
$\F\in\F_{d+1}(2d-1,c)$.

We use the fact that the normal bundle
$\nu(\Sigma_d,\Po^{2d-1})$ is irreducible \cite{Bad}. Then, by the
Theorem (\ref{normal}), the normal bundle is projectively flat and
the linear transversal type must be radial. By the Poincar\'e
linearization theorem, the transversal type is actually radial.

The normal bundle $\nu(\Sigma_d,\Po^{2d-1})$ is projectively flat.
Then the bundle
\[ E=\nu(\Sigma_d,\Po^{2d-1})\otimes
\mathcal{O}_{\Sigma_d}(-c/d-1)\quad\mbox{has}\quad c(E)=1\in
H^{\ast}(\Sigma_d,\mathbb{Z})
\]
and by \cite[pages 114--115]{Kob}, it is flat. Since
$\Sigma_d$ is simply connected, the bundle $E$ is trivial. Therefore

\[
\nu(\Sigma_d,\Po^{2d-1})= E\otimes
\mathcal{O}_{\Sigma_d}(c/(d-1))=\bigoplus^{d-1}
\mathcal{O}_{\Sigma_d}(c/(d-1))
\]

In the codimension one case, i. e. $\Po^2\times\Po^1\simeq
\Sigma_3\subset\Po^5$, we have a direct proof without the Theorem
(\ref{normal}).

We will prove that, if $\Sigma_3\subset\Po^5$ is the Kupka set
of a codimension one holomorphic foliation, then its normal bundle
$\nu(\Sigma_3,\Po^5)\simeq L_0\oplus L_1$ for some holomorphic vector
bundles of rank one.

Recall that for codimension one foliations (see for instance
\cite{joseom}), the
transversal type is given by a linearizable and diagonal vector field

\[
\mathbf{X}(x_0,x_1)=\lambda_0 x_0\frac{\partial}{\partial
  x_0}+\lambda_1x_1\frac{\partial}{\partial x_1},
\]
where the eigenvalues are positive integers.

We have two cases:

First, if $\lambda_0\neq \lambda_1$ then $\nu(\Sigma_3,\Po^5)=L_0\oplus
L_1$ and the normal bundle splits.

If $\lambda_0=\lambda_1=1.$ In this case, we have that
$c_1(N_{\F})=c$ is even, since the degree of the Kupka set is
$\deg(K)=c^2/4$ (see \ref{degK}) and $\nu(\Sigma_3,\Po^5)$ is
projectively flat. Therefore the vector bundle
\[
E=\nu(\Sigma_3,\Po^5)\otimes \mathcal{O}_{\Sigma_3}(-c/2)
\]
has $c_1(E)=c_2(E)=0$. It follows from \cite{Kob} pag. 114--115,
that it is flat.

Since $\Sigma_3$ is simply connected, then
$E=\mathcal{O}_{\Sigma_3}\oplus\mathcal{O}_{\Sigma_3}$. Then
\[
\nu(\Sigma_3,\Po^5)=E\otimes\mathcal{O}_{\Sigma_3}(c/2)=
\mathcal{O}_{\Sigma_3}(c/2)\oplus\mathcal{O}_{\Sigma_3}(c/2).
\]

Again in this case the normal bundle splits.
\end{example}

The properties of the normal bundle imposses many restrictions on the
Kupka set.

We use the following result \cite{BP}.

\begin{theorem}\label{normsplit}
Let $X\subset\Po^4$ be a smooth surface. If the normal bundle is
decomposable i.e.  $\nu(X,\Po^4)=L_1\oplus L_2$ for some $L_1,L_2\in
Pic(X)$, then $X$ is a complete intersection.
\end{theorem}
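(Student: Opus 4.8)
The statement to prove is the following result of Badescu--Popescu type (cited here as \cite{BP}): if $X\subset\Po^4$ is a smooth surface whose normal bundle $\nu(X,\Po^4)$ splits as a direct sum of two line bundles $L_1\oplus L_2$, then $X$ is a complete intersection. The plan is to produce, from the splitting of the normal bundle, a section of a line bundle on $\Po^4$ cutting out a hypersurface that contains $X$, and then a second one, so that $X$ becomes the scheme-theoretic intersection of the two hypersurfaces.

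First I would record the numerical consequences of the splitting. Write $c_1(\nu) = c_1(L_1) + c_1(L_2)$ and $c_2(\nu) = c_1(L_1)\cdot c_1(L_2)$, and compare with the values forced on $\nu(X,\Po^4)$ by the normal bundle sequence $0\to TX\to T\Po^4|_X\to\nu(X,\Po^4)\to 0$: that is, $c_1(\nu) = \mathcal{K}_X + 5H|_X$ (with $H$ the hyperplane class) and a self-intersection formula for $c_2(\nu) = [X]\cdot[X]$ in terms of $\deg X$. Since $\mathrm{Pic}(X)$ need not be $\Z$ in general, the key input is that the $L_i$ can be taken to be restrictions of line bundles on $\Po^4$: by the Lefschetz-type/Barth theorems for small codimension (here codimension $2$ in $\Po^4$, so $\dim X = 2 \geq 2\cdot 2 - 2 = 2$ is the borderline Barth range — one uses that $H^1(X,\mathcal{O}_X)=0$ and a Noether--Lefschetz or deformation argument, or simply quotes the precise form in \cite{BP}), one concludes $L_i = \mathcal{O}_X(a_i)$ for integers $a_1,a_2$. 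Renaming, $\nu(X,\Po^4)=\mathcal{O}_X(a)\oplus\mathcal{O}_X(b)$.

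Next I would exploit the splitting to build the two hypersurfaces. From $\nu(X,\Po^4)=\mathcal{O}_X(a)\oplus\mathcal{O}_X(b)$ one gets a surjection $\mathcal{N}^\vee = \mathcal{O}_X(-a)\oplus\mathcal{O}_X(-b) \twoheadleftarrow$ onto each summand, equivalently two independent "first-order equations'' for $X$. The mechanism is: the conormal bundle $\mathcal{I}_X/\mathcal{I}_X^2 \cong \mathcal{O}_X(-a)\oplus\mathcal{O}_X(-b)$; twisting the conormal sequence $0\to\mathcal{I}_X/\mathcal{I}_X^2\to\Omega^1_{\Po^4}|_X\to\Omega^1_X\to 0$ and chasing cohomology, together with the vanishing $H^1(\Po^4,\mathcal{I}_X(a))=H^1(\Po^4,\mathcal{I}_X(b))=0$ (Serre vanishing / Barth--Lefschetz applied to $\mathcal{I}_X$), one lifts the projections to genuine hypersurface equations $F_a\in H^0(\Po^4,\mathcal{O}(a))$, $F_b\in H^0(\Po^4,\mathcal{O}(b))$ vanishing on $X$. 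A dimension/degree count using $c_2(\nu)=\deg X = ab$ then shows that the complete intersection $Y = \{F_a=F_b=0\}$ has the same degree as $X$; since $X\subseteq Y$ and $Y$ is pure of dimension $2$, and $X$ is smooth (hence reduced and, being in the Barth range, connected), one gets $X = Y$ as schemes. Alternatively, one invokes the Serre correspondence: a rank-$2$ bundle with a splitting normal bundle gives, via the Hartshorne--Serre construction, that $\mathcal{I}_X$ admits a resolution $0\to\mathcal{O}(-a)\oplus\mathcal{O}(-b)\to\mathcal{I}_X(\,\cdot\,)\to 0$ type sequence forcing a Koszul resolution, which is precisely the statement that $X$ is a complete intersection.

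The main obstacle is the passage from "$\nu(X,\Po^4)$ splits over $X$" to "the summands are restrictions of line bundles on $\Po^4$ and the corresponding sections of $\mathcal{I}_X$ actually exist": this is where the smallness of the codimension is essential and where the relevant cohomology vanishings $H^1(\Po^4,\mathcal{I}_X(k))=0$ must be justified (via Barth--Lefschetz for the ideal sheaf, or the hypotheses built into \cite{BP}). Once those vanishings are in hand, the lifting of the conormal projections to global hypersurface equations and the Koszul/degree argument identifying $X$ with the complete intersection are formal. I would therefore organize the write-up as: (i) numerical normalization and identification of the summands with $\mathcal{O}_X(a),\mathcal{O}_X(b)$; (ii) the cohomology vanishing and the resulting lift to $F_a,F_b\in H^0(\mathcal{I}_X(\cdot))$; (iii) the conclusion that $X$ is scheme-theoretically $\{F_a=F_b=0\}$, i.e. a complete intersection — citing \cite{BP} for the steps that require the detailed Barth-type input.
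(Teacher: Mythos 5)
Your plan (splitting $\Rightarrow$ summands are restrictions $\mathcal{O}_X(a),\mathcal{O}_X(b)$ $\Rightarrow$ lift the conormal projections to hypersurface equations $\Rightarrow$ degree count) breaks down at its first and most important step, which is precisely where the content of the theorem lies. A smooth surface in $\Po^4$ is \emph{not} in the Barth--Larsen range: the restriction $\mathrm{Pic}(\Po^n)\to\mathrm{Pic}(X)$ is an isomorphism only when $\dim X\ge (n+2)/2$, i.e. $\dim X\ge 3$ inside $\Po^4$, and the auxiliary facts you invoke actually fail for smooth surfaces in $\Po^4$: the Horrocks--Mumford abelian surface has $H^1(X,\mathcal{O}_X)\neq 0$ and $\mathrm{Pic}(X)\neq\mathbb{Z}$, and the projected Veronese surface is not even linearly normal, so $H^1(\Po^4,\mathcal{I}_X(1))\neq 0$. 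Hence neither ``$L_i=\mathcal{O}_X(a_i)$'' nor the vanishings $H^1(\Po^4,\mathcal{I}_X(a))=H^1(\Po^4,\mathcal{I}_X(b))=0$ can be quoted from Lefschetz/Barth-type theorems; deriving them (or circumventing them) from the splitting hypothesis is the heart of the matter, and your fallback of ``quoting the precise form in \cite{BP}'' is circular, since \cite{BP} \emph{is} the statement to be proved. There is a second, independent gap: even granting $\nu(X,\Po^4)=\mathcal{O}_X(a)\oplus\mathcal{O}_X(b)$, lifting a surjection $\mathcal{I}_X/\mathcal{I}_X^2\to\mathcal{O}_X(-a)$ (twisted, a section of $(\mathcal{I}_X/\mathcal{I}_X^2)(a)$) to an element of $H^0(\Po^4,\mathcal{I}_X(a))$ is obstructed in $H^1(\Po^4,\mathcal{I}_X^2(a))$, since one lifts through $\mathcal{I}_X\to\mathcal{I}_X/\mathcal{I}_X^2$, not merely in $H^1(\mathcal{I}_X(a))$; no argument is offered for that vanishing, and Serre vanishing is irrelevant because the twist $a$ is fixed. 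By contrast, the final step (if $F_a,F_b$ existed with independent differentials along $X$, then $\deg X=ab$ from $[X]^2=\int_X c_2(\nu)$ and $X=\{F_a=F_b=0\}$) is fine.

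For the comparison you were asked to make: the paper itself gives no proof of this statement; it is imported verbatim from Basili--Peskine \cite{BP}, whose argument is of a different and substantially more involved nature (built around double structures associated with sub-line bundles of the normal bundle, as the title of \cite{BP} indicates), precisely because the Barth--Lefschetz-type restriction results you want to lean on are unavailable for surfaces in $\Po^4$. So your sketch is not a proof, nor an outline of the known one, until the two missing inputs above are supplied.
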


With this result, we are able to prove:

\begin{theorem}
Set $\F\in \K_3(4,c)$. If the transversal type is not the radial
vector field or it is radial but $K$ is simply connected, then $K$
is a complete intersection and $\F$ has a meromorphic first
integral.
\end{theorem}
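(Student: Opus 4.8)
The plan is to reduce the statement to Theorem \ref{normsplit} together with the Cerveau--Lins Neto Theorem \ref{CL}. Since $\F\in\K_3(4,c)$ is a codimension one foliation, the Kupka component $K$ is a smooth compact connected surface in $\Po^4$ (indeed $\dim(\F)=\cod(\F)+2$, so $S_2(\F)=K(\F)=K$), and its transversal type is a germ of holomorphic vector field on $(\C^2,0)$. As recalled above (see \cite{joseom}), for a codimension one foliation this transversal type is linearizable and diagonal,
\[
\mathbf{X}(x_0,x_1)=\lambda_0 x_0\frac{\partial}{\partial x_0}+\lambda_1 x_1\frac{\partial}{\partial x_1},\qquad \lambda_0,\lambda_1\in\mathbb{N},
\]
and $\mathbf{X}$ is radial precisely when $\lambda_0=\lambda_1$. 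The key point will be to show that $\nu(K,\Po^4)$ is a decomposable rank two bundle: once this is known, Theorem \ref{normsplit} gives that $K$ is a complete intersection, and then, since $\F\in\K_{n-1}(n,c)$ with $n=4\ge3$, Theorem \ref{CL} produces a rational map $\Phi\colon\Po^4\dasharrow\Po^1$ whose fibers are the leaves of $\F$, that is, a meromorphic first integral.

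First I would treat the non--radial case. If $\lambda_0\ne\lambda_1$, the linear part of $\mathbf{X}$ has two distinct eigenvalues, each giving a $1\times1$ Jordan block, so Theorem \ref{normal} yields a splitting $\nu(K,\Po^4)=L_0\oplus L_1$ into holomorphic line bundles, and we are done.

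Suppose now $\mathbf{X}$ is radial and $K$ is simply connected. By Theorem \ref{normal} the rank two bundle $\nu(K,\Po^4)$ is projectively flat, so the associated $\Po^1$--bundle $\Po(\nu(K,\Po^4))$ admits a holomorphic flat connection and is therefore classified by a representation $\pi_1(K)\to PGL(2,\C)$. As $K$ is simply connected this representation is trivial, hence $\Po(\nu(K,\Po^4))\simeq K\times\Po^1$, equivalently $\nu(K,\Po^4)\simeq M\oplus M$ for some $M\in\text{Pic}(K)$. (Alternatively one can mimic the Segre computation above: Theorem \ref{degK} gives $\deg(K)=c^2/4$, so $c$ is even, and $\nu(K,\Po^4)\otimes\mathcal{O}_K(-c/2)$ is a projectively flat bundle with vanishing Chern classes, hence flat by \cite[pp.~114--115]{Kob} and, $K$ being simply connected, trivial.) In either case $\nu(K,\Po^4)$ is decomposable, Theorem \ref{normsplit} applies, $K$ is a complete intersection, and we conclude as above.

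The step I expect to be the main obstacle is the radial case, namely upgrading the projective flatness of $\nu(K,\Po^4)$ to an honest splitting; this is exactly where the hypothesis that $K$ is simply connected is needed, and it is precisely the obstruction that prevents the argument from covering the remaining (radial, $K$ not simply connected) configuration.
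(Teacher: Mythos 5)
Your proposal is correct and follows essentially the same route as the paper: distinct eigenvalues give the splitting of $\nu(K,\Po^4)$ via Theorem \ref{normal}, while in the radial simply connected case one twists by $\mathcal{O}_K(-c/2)$ (using $\deg(K)=c^2/4$ from Theorem \ref{degK} to see $c$ is even), invokes flatness from \cite{Kob} and triviality from $\pi_1(K)=1$, and then applies Theorem \ref{normsplit}. Your alternative flat $\Po^1$--bundle/representation argument is only a cosmetic variant, and your explicit final appeal to Theorem \ref{CL} for the meromorphic first integral merely spells out what the paper leaves implicit.
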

\begin{proof}
The transversal type is $\mathbf{X}_{pq}$ for some integers $1\leq p< q$ or $p=q=1$.
In the first case, the normal bundle splits in a sum of line bundles.

If $p=q=1$, then $\nu(K,\Po^4)$ is projectively flat and the foliation has even first Chern class andy
$\nu(K,\Po^4)(-c/2)$ is flat. Since $\pi_1(K,\ast)=1$, it is trivial and
$\nu(K,\Po^4)=\mathcal{O}_K(c/2)\oplus \mathcal{O}_K(c/2).$ Anyway, the conclusion follows from (\ref{normsplit}).
\end{proof}

Recall that in the codimension one case, for a compact, conected component of the Kupka set, there exists a
rank two holomorphic vector bundle $E$, with a section $\sigma$, and the exact sequence
\[
0\rightarrow\mathcal{O}\stackrel{\sigma}{\rightarrow}E\rightarrow\mathcal{J}_K(c)\rightarrow0.
\]
The Kupka set is a complete intersection if and only if $E$ splits \cite{CSoares}.

As a consequence of the Horrocks criteria for splitting of
holomorphic vector bundles on the projective space, we have

\begin{corollary}
Set $\F\in \K_{n-1}(n,c)\quad n\geq 5$, then $K$ is a complete intersection.
\end{corollary}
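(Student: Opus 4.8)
The goal is to deduce the splitting of $K$ in $\Po^n$ for $n\geq 5$ from the codimension one structure discussed just above, via Horrocks' criterion.

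The plan is to use the exact sequence recalled in the text,
\[
0\rightarrow\mathcal{O}\stackrel{\sigma}{\rightarrow}E\rightarrow\mathcal{J}_K(c)\rightarrow0,
\]
which exists for any compact connected Kupka component of a codimension one foliation $\F\in\K_{n-1}(n,c)$, where $E$ is a rank two holomorphic vector bundle on $\Po^n$. By the result quoted from \cite{CSoares}, $K$ is a complete intersection if and only if $E$ splits as a direct sum of two line bundles. So the entire task reduces to proving that $E$ splits. For this I would invoke the Horrocks splitting criterion: a holomorphic vector bundle $E$ on $\Po^n$ splits as a sum of line bundles if and only if $H^i(\Po^n, E(t))=0$ for all $t\in\Z$ and all $1\leq i\leq n-1$. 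Since $E$ has rank two and we may twist freely, it suffices to check the intermediate cohomology vanishing for $E$.

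First I would extract the relevant cohomology from the defining sequence. Twisting by $\mathcal{O}(t)$ and taking the long exact sequence in cohomology, the groups $H^i(\Po^n,E(t))$ are controlled by $H^i(\Po^n,\mathcal{O}(t))$ and $H^i(\Po^n,\mathcal{J}_K(c+t))$. The first of these vanishes for $1\leq i\leq n-1$ and all $t$ by Bott's formula. For the ideal sheaf term, I would use the sequence $0\to\mathcal{J}_K\to\mathcal{O}_{\Po^n}\to\mathcal{O}_K\to 0$: the cohomology of $\mathcal{J}_K(m)$ in degrees $1\leq i\leq n-2$ is governed by $H^{i-1}(K,\mathcal{O}_K(m))$ and $H^i(\Po^n,\mathcal{O}(m))$, while in degree $i=n-1$ it is governed by $H^{n-1}(\Po^n,\mathcal{O}(m))=0$ and $H^{n-2}(K,\mathcal{O}_K(m))$. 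Here one uses that $K$ is a smooth submanifold of dimension $n-2$ (Theorem \ref{mede}); since $\F$ has small codimension in the relevant range $K=S_{n-2}(\F)$ is connected. The key input is that $K$, being the Kupka set of a codimension one foliation, is an $(n-2)$-dimensional submanifold which is arithmetically Cohen--Macaulay, or more directly that its intermediate cohomology $H^j(K,\mathcal{O}_K(m))=0$ for $1\leq j\leq n-3$ and all $m$, which holds because $K$ is subcanonically embedded (Proposition \ref{subcanonical}) and, by the results already cited (Brunella, Calvo-Andrade), is a complete intersection — but to avoid circularity one instead argues directly from the projective normality and the structure of $E$, noting that for $n\geq 5$ the codimension of $K$ is $2$ so the relevant Horrocks range $1\leq i\leq n-1$ only involves cohomology groups that one can kill using Serre duality on $K$ together with the adjunction $\mathcal{K}_K=(\mathcal{K}_M\otimes L)|_K$.

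The main obstacle is the verification that all the intermediate cohomology of $\mathcal{J}_K$ (equivalently of $\mathcal{O}_K$) vanishes in the full Horrocks range; this is exactly where the hypothesis $n\geq 5$ is used, since for $n=3,4$ the surface or curve case requires the separate arguments (Gherardelli, Theorem \ref{normsplit}) given earlier, and the general-position or arithmetically-Cohen--Macaulay property of $K$ is not automatic in low dimension. Once that vanishing is in hand, the long exact sequences assemble to give $H^i(\Po^n,E(t))=0$ for $1\leq i\leq n-1$ and all $t$, Horrocks' theorem yields $E=\mathcal{O}(a)\oplus\mathcal{O}(b)$, and the cited criterion of \cite{CSoares} then shows $K$ is a complete intersection of two hypersurfaces; combining with Theorem \ref{CL} one recovers the rational first integral, completing the proof.
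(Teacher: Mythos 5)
Your reduction of the statement to the splitting of the rank two bundle $E$ via Horrocks' criterion is fine as far as it goes, but the heart of your argument is missing. For a subcanonical codimension two submanifold $K\subset\Po^n$ sitting in the Serre sequence $0\to\mathcal{O}\to E\to\mathcal{J}_K(c)\to 0$, the vanishing $H^i(\Po^n,E(t))=0$ for $1\leq i\leq n-1$ and all $t$ is, by exactly the long exact sequences you write down, \emph{equivalent} to $K$ being arithmetically Cohen--Macaulay, which for such a $K$ is in turn equivalent to $K$ being a complete intersection. So your proposal reduces the corollary to an equivalent reformulation and then asserts the decisive vanishing rather than proving it. The tools you invoke at that point do not produce it: subcanonicity (Proposition \ref{subcanonical}) plus Serre duality and adjunction only relate $H^j(K,\mathcal{O}_K(m))$ to $H^{n-2-j}(K,\mathcal{O}_K(m'))$, they do not kill anything (the Horrocks--Mumford abelian surface in $\Po^4$, mentioned in the paper, is subcanonical and far from ACM); ``projective normality'' of $K$ is nowhere established and is itself part of what has to be shown; and the appeal to Brunella/Calvo--Andrade is, as you yourself note, circular in this context. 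Moreover, nothing in your argument actually uses $n\geq 5$, the Kupka structure, or the transversal type --- you only say that this ``is exactly where the hypothesis is used'' without an argument.

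The paper's proof goes a different way and the input it uses is what your sketch lacks. For $n=5$: since $\dim K=n-2=3$ and $2\dim K\geq n+1$, Barth--Larsen gives $\pi_1(K)=1$, and by Lefschetz a generic hyperplane section of $K$ is again simply connected; restricting $\F$ to a generic $\Po^4\subset\Po^5$ one gets a foliation in $\K_3(4,c)$ whose Kupka surface is either of non-radial type or radial with simply connected Kupka set, so the preceding theorem (resting on the Basili--Peskine splitting theorem for surfaces in $\Po^4$, Theorem \ref{normsplit}) shows the section is a complete intersection; hence $E|_{\Po^4}$ splits, and by the Horrocks-type restriction principle $E$ splits on $\Po^5$, so $K$ is a complete intersection by the criterion of \cite{CSoares}; the case $n>5$ follows by iterating hyperplane sections. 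If you want to salvage your direct cohomological approach, you must supply an actual proof of the intermediate cohomology vanishing for $\mathcal{J}_K$ using the foliation (for instance via the normal form near $K$ or a hyperplane-restriction induction), not merely the formal Serre/adjunction bookkeeping.
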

\begin{proof}
Let $\F\in \K_4(5,c)$ with Kupka set $K$. Then $K$ is simply
connected and by Lefschetz, it hyperplane section also is a simply
connected. Let $\ell:\Po^4\to \Po^5$ a linear embedding and
$\ell^{\ast}\F$ has a complete intersection Kupka set
$\ell^{-1}K$. Hence $K$ is a complete intersection.
\end{proof}

\begin{example}
It is shown in \cite{CSoares}, that the complex Torus
$\C^2/ \Lambda\subset \Po^4,$ associated to
the Horrocks--Munford bundle, can not be the Kupka set for a
foliations on $\Po^4$.

The Horrocks--Munford bundle is stable, but this property never holds for
the rank two holomorphic vector bundle associated to a Kupka component.
\end{example}

\subsection{Normal Forms}

Through this subsection, we assume that the
transversal type of a compact Kupka component is a linear vector
field of the type
\[
\mathbf{X}= \sum_{i=0}^{n-k} \lambda_i z_i\frac{\partial}{\partial z_i},
\quad \lambda_i\neq 0\neq \lambda_i-\lambda_j,\quad i,j=0\dots n-k
\]
and $\Om = \imath_{\mathbf{X}}dz_0\wedge \cdots\wedge dz_{n-k}$.

We consider such a vector field, as a vector in $\Lambda\in \C^{n+k+1},$
where $\Lambda=(\lambda_0,\dots ,\lambda_{n-k}).$

\begin{definition}
$\Lambda=(\lambda_0,\dots ,\lambda_{n-k})$ is \textit{ resonant }
if among the eigenvalues there exists integer relations of the
type
\[\lambda_s=\langle \mathbf{m},\Lambda\rangle=\sum_{j=0}^{n-k} m_j\lambda_j\]
where $\mathbf{m}=(m_0,\dots,m_{n-k}),\quad m_j\geq0,\quad \sum
m_j\geq2.$ Such a relation is called a \textit{resonance}. The
number $|\mathbf{m}|=\sum m_j$ is called the \textit{order} of the
resonance.
\end{definition}

Resonances are the obstructions for formal linearization of vector
fields of the type $\mathbf{Y}=\mathbf{X}+\cdots$. For us, an
interpretation of the meaning of the resonance is the following.

For a linear vector field $\mathbf{X}$, for all $j=0,\dots n-k$,
the hyperplanes $\{z_j=0\}$ are keeping invariant. If there is a
resonance of the type $\lambda_k=\langle
\mathbf{m},\Lambda\rangle$ the families of hypersurfaces $z_k+ c
z_0^{m_0}\cdots z_{n-k}^{m_{n-k}}= z_k+c Z^{\mathbf{m}}$ are
also invariants by $\mathbf{X}$. Namely
\[
\mathbf{X}(z_k+ c Z^{\mathbf{m}})= \lambda_k z_k+ c\mathbf{X}
(Z^{\mathbf{m}})=\lambda_k z_k +c \sum m_j\lambda_j Z^m=
\lambda_k(z_k + c Z^{\mathbf{m}}).
\]

Observe that the smooth hypersurfaces
$\{(z_k+cZ^{\mathbf{m}})=0\}$ and $\{z_k=0\}$ are tangent at the
origin.

The vector polynomials $Z^{\mathbf{m}}\mathbf{e}_j$ are resonant
if there is a resonance of the type $\lambda_j=\langle
\mathbf{m},\Lambda\rangle$. It follows that
\[
\mathbf{X} (Z^{\mathbf{m}}\mathbf{e}_j)=[\langle
\mathbf{m},\Lambda\rangle-\lambda_j] Z^{\mathbf{m}}\mathbf{e}_j
\]

In the sequel, we consider the forms
$\Om=\imath_{\mathbf{X}}dz_0\wedge\cdots \wedge dz_{n-k}$ with the
assumption that $(\lambda_0,\dots,\lambda_{n-k})$ are positive
integers.

We divide our analysis in three cases. The non--resonant, the
resonant and the radial as transversal type of a Kupka component.

\subsubsection{Non--resonant case}

\begin{theorem}
Let $K$ be a compact connected component of the Kupka set with transversal type
\[\mathbf{X}= \sum_{i=0}^{n-k} \lambda_i z_i\frac{\partial}{\partial z_i}\quad
\mbox{where}\quad \lambda_i\in\mathbb{N}
\]
and no--resonant. Then there exists
a neighborhood $K\subset U$ such that $\F|_U$ is defined by a
meromorphic closed $(n-k)$ form with poles along an invariant
divisor.
\end{theorem}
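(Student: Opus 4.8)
The plan is to exploit the local product structure of the Kupka set together with the absence of resonances to linearize the transversal type in a \emph{compatible} way on all the charts $(U_\alpha,\phi_\alpha)$ of Theorem~\ref{mede}, and then to build the meromorphic closed $(n-k)$-form by gluing an explicit model. First I would recall the transversal model
\[
\Om = \imath_{\mathbf{X}}dz_0\wedge\cdots\wedge dz_{n-k},\qquad
\mathbf{X}=\sum_{i=0}^{n-k}\lambda_i z_i\frac{\partial}{\partial z_i},
\]
and observe that for this $\mathbf{X}$ one has the explicit integrating factor $G=z_0z_1\cdots z_{n-k}$, since a direct computation gives
\[
\frac{\Om}{z_0\cdots z_{n-k}}
=\sum_{i=0}^{n-k}(-1)^i\lambda_i\,\frac{dz_0}{z_0}\wedge\cdots
\widehat{\frac{dz_i}{z_i}}\cdots\wedge\frac{dz_{n-k}}{z_{n-k}},
\]
which is manifestly closed (it is a $\mathbb{C}$-linear combination of closed logarithmic forms) and has poles only along the invariant divisor $\{z_0\cdots z_{n-k}=0\}$. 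So on each chart the pulled-back form $\Om_\alpha=\phi_\alpha^{*}\Om$ already admits a local closed meromorphic representative $\eta_\alpha:=\phi_\alpha^{*}(\Om/G)$ with poles along $D_\alpha:=\phi_\alpha^{-1}\{z_0\cdots z_{n-k}=0\}$.

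The substance of the proof is to show these local data glue. On an overlap $U_{\alpha\beta}$ the coordinate changes $\phi_{\alpha\beta}=\phi_\beta\circ\phi_\alpha^{-1}$ satisfy $\phi_{\alpha\beta}^{*}\eta=\lambda_{\alpha\beta}\eta$ and, by the argument in the proof of Theorem~\ref{normal}, conjugate the linear part $X_1$ to a scalar multiple of itself, so they preserve the eigenspace decomposition. Here is where non-resonance enters: by the Poincar\'e--Dulac theorem, since $\mathbf{X}=X_1$ is already linear and diagonal with eigenvalues $\lambda_i\in\mathbb{N}$ admitting no resonance $\lambda_s=\langle\mathbf{m},\Lambda\rangle$, every biholomorphism that sends the germ of $\mathbf{X}$ to a multiple of itself is, up to scaling, linear and \emph{monomial} in each coordinate — i.e.\ $\phi_{\alpha\beta}$ maps each hyperplane $\{z_i=0\}$ to another such hyperplane (it cannot mix them by non-resonance, and cannot bend them because the only invariant smooth hypersurfaces through $0$ tangent to $\{z_i=0\}$ are $\{z_i=0\}$ itself when $\lambda_i$ is non-resonant). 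Consequently the invariant divisors $D_\alpha$ patch to a global divisor $D\subset U$, and each pole coordinate $z_i\circ\phi_\alpha$ transforms, across overlaps, by multiplication by nowhere-vanishing holomorphic functions (a cocycle in $\mathcal{O}^{*}$). The logarithmic forms $dz_i/z_i$ are then invariant up to \emph{exact} terms $d\log(\text{unit})$, and a short check shows the closed forms $\eta_\alpha$ differ on overlaps only by the natural $\lambda$-twisting, hence define a global closed meromorphic section of $\Om^{n-k}(L)$ on a neighborhood $U$ of $K$, with polar set the invariant divisor $D$; this closed form represents $\F|_U$ because $\eta_\alpha$ represents $\F|_{U_\alpha}$ and is a multiple of $\Om_\alpha$ by construction.

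The main obstacle I expect is precisely the rigidity step: proving that the transition maps $\phi_{\alpha\beta}$ respect the coordinate hyperplanes globally enough that the poles glue. One subtlety is that Poincar\'e--Dulac gives \emph{formal} normalization and, in the Poincar\'e domain (eigenvalues in a half-plane, which holds since $\lambda_i>0$), convergence — but one must still argue that the \emph{normalizing coordinates themselves can be chosen coherently on overlaps}, not merely that each $\phi_\alpha$ can be individually normalized. The clean way around this is to not re-normalize at all: the $\phi_\alpha$ are \emph{already} given by Theorem~\ref{mede} with the linear transversal type, so the only thing to check is that the $\phi_{\alpha\beta}$, which by Theorem~\ref{normal}'s proof conjugate $X_1$ to $aX_1$, must permute the eigen-hyperplanes — and this is a purely linear-algebraic statement about automorphisms commuting (up to scalar) with a diagonal matrix having distinct eigenvalues, so each $\phi_{\alpha\beta}$ is diagonal-plus-permutation on the linear part, forcing the nonlinear terms to be monomial by non-resonance. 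Once that is in hand, the cocycle computation for the glued logarithmic form is routine, and one obtains the closed meromorphic $(n-k)$-form with poles along the invariant divisor as claimed.
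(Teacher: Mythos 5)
Your local model and integrating factor $G=z_0\cdots z_{n-k}$, and the observation that non-resonance forces the transition maps to respect the coordinate hyperplanes (so that $z^j_\alpha=g^j_{\alpha\beta}z^j_\beta$ with $g^j_{\alpha\beta}\in\mathcal{O}^{*}$ and the polar divisors glue), are in line with the paper; the paper gets the hyperplane-matching more directly by remarking that in the non-resonant case the hyperplanes $\{z_j=0\}$ are the \emph{only} smooth invariant hypersurfaces of the transversal model, so your detour through Poincar\'e--Dulac is unnecessary but not wrong.

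The genuine gap is in the final gluing step, which is exactly where the content of the theorem lies. From $\Om_\alpha=\lambda_{\alpha\beta}\Om_\beta$ and $z^j_\alpha=g^j_{\alpha\beta}z^j_\beta$ you get
\[
\eta_\alpha=c_{\alpha\beta}\,\eta_\beta,\qquad
c_{\alpha\beta}=\frac{\lambda_{\alpha\beta}}{g^0_{\alpha\beta}\cdots g^{n-k}_{\alpha\beta}}\in\mathcal{O}^{*}(U_{\alpha\beta}),
\]
and the theorem asserts the existence of a single global closed meromorphic $(n-k)$-form on a neighborhood of $K$, i.e.\ that this residual cocycle is trivial, in fact $c_{\alpha\beta}\equiv 1$. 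Your ``short check'' does not establish this; worse, your stated conclusion --- a ``closed meromorphic section of $\Om^{n-k}(L)$ differing on overlaps by the natural $\lambda$-twisting'' --- is not the theorem's conclusion at all: a nontrivially twisted family of local forms is just (a rescaling of) the datum $\Om$ you started with, and it is the untwisted global form that is needed later (e.g.\ to extend the integrating factor and the closed forms from $U$ to $\Po^n$ in the proof of Theorem 1). The paper closes this gap as follows: since $\eta_\alpha$ and $\eta_\beta$ are both closed, taking $d$ of $\eta_\alpha=c_{\alpha\beta}\eta_\beta$ gives $dc_{\alpha\beta}\wedge\eta_\beta=0$, so $c_{\alpha\beta}$ is a holomorphic first integral of the transversal vector field; because the eigenvalues are positive (Poincar\'e domain), all orbits accumulate on the singular locus and such a first integral must be constant; finally, comparing the two forms along their common polar divisor (they have the same poles and residues) forces the constant to equal $1$, whence $\eta_\alpha=\eta_\beta$ and the forms patch into the desired global closed meromorphic form. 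Without an argument of this kind your construction does not produce the object claimed in the statement.
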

\begin{proof}
Consider $(U_{\alpha},\phi_{\alpha})$ a covering by charts of the
Kupka set andy
 $\Om_{\alpha}=\phi_{\alpha}^{\ast}\Om$.

In this case, we have that for all $j=0,\dots,n-k$, the
hyperplanes $\{z_j=0\}$ are the only smooth invariant
hypersurfaces, then we get
\[
\Om_{\alpha}=\sum_{j=0}^{n-k} \lambda_j z_{\alpha}^j dz_{\alpha}^{j+1}\wedge \cdots \wedge
dz_{\alpha}^{j-1}
\]
moreover, for all $j=0,\dots,n-k$, we have
$z_{\alpha}^j=g_{\alpha\beta}^jz_{\beta}^j,\quad\mbox{for
some}\quad g_{\alpha\beta}^j\in
\mathcal{O}^{\ast}(U_{\alpha\beta})$.

Consider the meromorphic closed forms
\[
\eta_{\alpha}=\frac{\Om_{\alpha}}{z_{\alpha}^0\cdots
z_{\alpha}^{n-k}}=
\frac{\lambda_{\alpha\beta}}{g^0_{\alpha\beta}\cdots
g^{n-k}_{\alpha\beta}} \frac{\Om_{\beta}}{z_{\beta}^0\cdots
z_{\beta}^{n-k}}=\frac{\lambda_{\alpha\beta}}{g^0_{\alpha\beta}\cdots
g^{n-k}_{\alpha\beta}}\eta_{\beta}.
\]

We have the following cocycles
\[c_{\alpha\beta}=\frac{\lambda_{\alpha\beta}}{g^0_{\alpha\beta}\cdots g^{n-k}_{\alpha\beta}}
\in \mathcal{O}^{\ast}(U_{\alpha\beta}).
\]

We claim that $c_{\alpha\beta}\equiv 1$. In fact, since
$\eta_{\alpha}$ and $\eta_{\beta}$ are closed andy
$\eta_{\alpha}=c_{\alpha\beta}\eta_{\beta}$, after taking exterior
derivative we have $0=dc_{\alpha\beta}\wedge\eta_{\beta}$. It
follows that $c_{\alpha\beta}$ is a holomorphic first integral of
the vector field $\mathbf{X}$, since the eigenvalues are positive,
any solution of the form accumulate at $0$, therefore
$c_{\alpha\beta}$ is a constant.

On the other hand, the forms $\eta_{\alpha}$ and $\eta_{\beta}$
have the same poles, it follows that the constant
$c_{\alpha\beta}=1$.

Therefore, $\eta_{\alpha}=\eta_{\beta}$ andy then, there exists a
meromorphic closed $(n-k)$ form $\eta$ defined in
$U=\cup U_{\alpha}\supset K$, with poles on an invariant divisor,
and representing the foliation.
This proof the non--resonant case.
\end{proof}

\subsubsection{Resonant Case}

Again, consider the diagonal vector field
\[
\mathbf{X}= \sum_{i=0}^{n-k} \lambda_i z_i\frac{\partial}{\partial
z_i},\quad \lambda_i\in \mathbb{N},
\]
and the $n-k$ form

\[
\Om= \sum_{j=0}^{n-k} (-1)^j\lambda_j x_j
dx_0\wedge \cdots \wedge\widehat{dx_j}\wedge\dots \wedge dx_{n-k}.
\]

Now, assume that the subset of eigenvalues
$\Lambda_{NR}=\{\lambda_0<\lambda_1<\dots <\lambda_{\ell}\}$ is
non--resonant and maximal with this property. It is always non
empty $\lambda_0\in\Lambda_{NR}$. The set
$\Lambda_{R}=\{\lambda_{\ell+1}<\dots < \lambda_{n-k}\}$ have
resonances, that always may be found of the type
\[
\lambda_{\ell +j} =
\lambda_0 m_0^{\ell +j}+\cdots +\lambda_{\ell} m_{\ell}^{{\ell}+j},\quad |\mathbf{m}|\geq2
\]
involving only non--resonant eigenvalues.

In order to see that, we observe first that $\lambda_{\ell+1}$ is
the smaller of all resonant eigenvalues andy it has an expression
involving only non--resonant eigenvalues.

We proceed by induction over $k$, with the index $\ell+k$ andy prove for $\ell+k+1$.
Assume that the statement is true for $\ell+k$ andy now, we have
\[
\lambda_{\ell + k+1}=\sum_{j=0}^{s}m_j^{k+1}\lambda_{j},
\]
since it involves only terms $\lambda_j<\lambda_{\ell+k+1}$. In
particular, in this sum only could appear resonant eigenvalues
$\lambda_{\ell+t}$ for $1\leq t\leq k.$ Then, these
eigenvalues may be replaced by the non--resonant terms.
Now, for any $s=1,\dots n-k-\ell$, consider all elements
\[
R(s)=\{ \textbf{m}\in \mathbb{N}^{\ell} \, |\,
\langle\textbf{m},\Lambda_{NR}\rangle=\lambda_{\ell+s}\}
\]
 And for any $\textbf{m}=(m_0,\dots,m_{\ell})\in R(s)$,
we consider the rational functions
\[
\varphi_{\textbf{m}}^{\ell+s} = \frac{x_{\ell+s}}{x_0^{m_0}\dots x_{\ell}^{m_{\ell}}}
=\frac{x_{\ell+s}}{x^{\mathbf{m}^s}}\quad
\lambda_{\ell+s}=m_0^s\lambda_0+\cdots + m_{\ell}^s\lambda_{\ell}
= \langle \mathbf{m}^s,\Lambda_{NR}\rangle.
\]

 And for any $\textbf{m}=(m_0,\dots,m_{\ell})\in R(s)$,
we consider the rational functions
\[
\varphi_{\textbf{m}}^{\ell+s} = \frac{x_{\ell+s}}{x_0^{m_0}\dots x_{\ell}^{m_{\ell}}}
=\frac{x_{\ell+s}}{x^{\mathbf{m}^s}}\quad
\lambda_{\ell+s}=m_0^s\lambda_0+\cdots + m_{\ell}^s\lambda_{\ell}
= \langle \mathbf{m}^s,\Lambda_{NR}\rangle.
\]

We will use the short notation. Define the monomial
\[
h^{\ell+s}_{\textbf{m}}(x_0,\dots,x_{\ell})=x_0^{m_0^s}\dots
x_{\ell}^{m_{\ell}^s},\quad \mathbf{m}=(m_0,\dots,m_{\ell})\in
R(s),
\]
then we have
\[
x_{\ell+s}=h^{\ell+s}_{\mathbf{m}} \cdot\varphi^{\ell+s}_{\mathbf{m}},\qquad
dx_{\ell+s}=\varphi_{\mathbf{m}}^s\cdot d h_{\mathbf{m}}^s +
h_{\mathbf{m}}^{\ell+s} \cdot d\varphi_{\mathbf{m}}^{\ell+s}
\]

The function $h^{\ell+s}_{\textbf{m}}$ is a resonant monomial associated to the
eigenvalue $\lambda_{\ell+s}$ then we have, for instance
$s=n-k-\ell$ that

\begin{eqnarray*}
\Om & = & \sum_{j=0}^{n-k} (-1)^i\lambda_i x_i
dx_0\wedge \dots \wedge\widehat{dx_i}\wedge\dots\wedge dx_{n-k} \\
    & = & \varphi^{n-k}_{\textbf{m}} \sum_{j=0}^{n-k-1} (-1)^i\lambda_i x_i
    dx_0\wedge \dots \widehat{dx_i}\dots\wedge dh^{n-k}_{\textbf{m}} \\
    & + & (-1)^{n-k}\lambda_{n-k}x_{n-k}dx_0\wedge\dots\wedge dx_{n-k-1} \\
    & + & h^{n-k}_{\textbf{m}}  \sum_{j=0}^{n-k-1} (-1)^i\lambda_i x_i
    dx_0\wedge \dots \wedge\widehat{dx_i}\wedge\dots\wedge d\varphi^{n-k}_{\textbf{m}}.
\end{eqnarray*}

Since $x_{n-k}=h^{n-k}_{\mathbf{m}}\varphi^{n-k}_{\mathbf{m}}$,
and the resonance implies that
\begin{eqnarray*}
&&\varphi^{n-k}_{\textbf{m}} \sum_{j=0}^{n-k-1} (-1)^i\lambda_i x_i
dx_0\wedge \dots \widehat{dx_i}\dots \wedge
dx_{n-k-1}\wedge dh^{n-k}_{\textbf{m}} \\
 && +
(-1)^{n-k}\lambda_{n-k}(\varphi_{\mathbf{m}}^{n-k}h^{n-k}_{\textbf{m}})
\,dx_0\wedge\dots\wedge dx_{n-k-1}=0.
\end{eqnarray*}

We finally we get
\begin{equation}\label{firstintegral}
   \Om =  h^k_{\mathbf{m}}\cdot\left(  \sum_{j=0}^{n-k-1} (-1)^ix_i
   dx_0\wedge \dots \widehat{dx_i}\dots dx_{n-k-1}\wedge d\varphi^{n-k}_{\mathbf{m}}\right)
\end{equation}

This expression may be written in the form
\[
\Om = h^{n-k}_{\mathbf{m}}\Om_1\wedge
d\varphi^{n-k}_{\mathbf{m}}\quad \Om_1
 =\sum_{j=0}^{n-k-1}(-1)^i \lambda_i x_i dx_0\wedge\dots\widehat{dx_i}\dots \wedge dx_{n-k-1}
\]

By this procedure, we are able to get inductively the expression
\begin{eqnarray*}
  \Om & = &  h^{\ell+1}_{\mathbf{m}_{\ell+1}}\dots h^{n-k}_{\mathbf{m}_{n-k}}
  \cdot \left( \Om_{\ell}\wedge d\varphi^{\ell+1}_{\mathbf{m}_{\ell+1}}\wedge\dots\wedge
  d\varphi^{n-k}_{\mathbf{m}_{n-k}} \right)   \\
   & = & h^{\ell+1}_{\mathbf{m}_{\ell+1}}\dots h^{n-k}_{\textbf{m}_{n-k}}\cdot
   \left(\Om_{\ell}\wedge
   d\left(\frac{x_{\ell+1}}{h^{\ell+1}_{\mathbf{m}_{\ell+1}}}\right)\wedge \dots \wedge
   d\left(\frac{x_{n-k}}{h^{n-k}_{\mathbf{m}_{n-k}}}\right)\right).
\end{eqnarray*}

Now, we define the non--resonant part as the $\ell$--form
\[
\Om_{NR}= \sum_{j=0}^{\ell} (-1)^j\lambda_j x_j
dx_0\wedge \cdots \wedge\widehat{dx_j}\wedge\dots \wedge dx_{\ell}
\]
and the \textit{logarithmic non--resonant part}
\[
\eta_{NR}=\frac{\Om_{NR}}{x_0\dots x_{\ell}}=
\sum_{j=0}^{\ell} (-1)^j\lambda_j \frac{dx_0}{x_0} \wedge \cdots \wedge\widehat{\frac{dx_j}{x_j}}\wedge\dots \wedge
\frac{dx_{\ell}}{x_{\ell}}.
\]

The above calculations implies the following theorem.

\begin{theorem}[Local Resonant Normal Form]\label{resonatFF}
Consider
\[
\Om= \sum_{j=0}^{n-k} (-1)^i \lambda_i x_i
dx_0\wedge\dots\widehat{dx_i}\dots \wedge dx_{n-k}, \quad
\lambda_i\in \mathbb{N},\quad \lambda_i\neq \lambda_j\mbox{ for }
i\neq j
\]
Let $\Lambda_{NR}=\{\lambda_0<\dots<\lambda_{\ell}\}$ and
$\Lambda_{R}=\{\lambda_{\ell+1}<\dots<\lambda_{n-k}\}$ be the non--resonant
part and the resonant part respectively.
Then there exists an integrating factor:
\[
\frac{\Om}{x_0\dots x_{\ell}H_{\textbf{M}}(x_0,\dots x_{\ell})}=
\eta_{NR}\wedge
d\varphi^{\ell+1}_{\textbf{m}_{\ell+1}}\wedge\dots\wedge
d\varphi^{n-k}_{\textbf{m}_{n-k}}.
\]
where $H_{\textbf{M}}=h^{\ell+1}_{\textbf{m}_{\ell+1}}\dots
h^{n-k}_{\textbf{m}_{n-k}}$, where
$\textbf{M}=(\textbf{m}_{\ell+1},\dots, \textbf{m}_{n-k})$ and
$\textbf{m}_{\ell+s}\in R(s)$.
\end{theorem}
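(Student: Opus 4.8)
Given the diagonal vector field $\mathbf{X}=\sum_{i=0}^{n-k}\lambda_i z_i\,\partial/\partial z_i$ with $\lambda_i\in\mathbb N$ distinct, and $\Om=\sum_j(-1)^i\lambda_i x_i\,dx_0\wedge\dots\widehat{dx_i}\dots\wedge dx_{n-k}$, split the eigenvalues into $\Lambda_{NR}=\{\lambda_0<\dots<\lambda_\ell\}$ (maximal non-resonant) and $\Lambda_R=\{\lambda_{\ell+1}<\dots<\lambda_{n-k}\}$. Find an integrating factor $x_0\cdots x_\ell H_{\mathbf M}$ so that $\Om/(x_0\cdots x_\ell H_{\mathbf M})=\eta_{NR}\wedge d\varphi^{\ell+1}_{\mathbf m_{\ell+1}}\wedge\dots\wedge d\varphi^{n-k}_{\mathbf m_{n-k}}$.

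Let me think about this carefully.
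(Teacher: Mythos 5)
Your submission contains no proof: after restating the hypotheses and the desired conclusion you stop at ``Let me think about this carefully,'' so there is nothing to check against the paper's argument. Every substantive step is missing. In particular, the heart of the paper's proof is the elimination of the resonant variables one at a time: for a resonant eigenvalue $\lambda_{\ell+s}=\langle\mathbf m^{s},\Lambda_{NR}\rangle$ one writes $x_{\ell+s}=h^{\ell+s}_{\mathbf m}\,\varphi^{\ell+s}_{\mathbf m}$ with $h^{\ell+s}_{\mathbf m}=x_0^{m_0^{s}}\cdots x_\ell^{m_\ell^{s}}$, substitutes $dx_{\ell+s}=\varphi^{\ell+s}_{\mathbf m}\,dh^{\ell+s}_{\mathbf m}+h^{\ell+s}_{\mathbf m}\,d\varphi^{\ell+s}_{\mathbf m}$ into $\Om$, and uses the resonance relation (equivalently $\mathbf X(h^{\ell+s}_{\mathbf m})=\lambda_{\ell+s}h^{\ell+s}_{\mathbf m}$) to show that the terms containing $dh^{\ell+s}_{\mathbf m}$ cancel exactly against the term $(-1)^{\,\ell+s}\lambda_{\ell+s}x_{\ell+s}\,dx_0\wedge\dots\wedge\widehat{dx_{\ell+s}}\wedge\dots$, leaving
\[
\Om=h^{\ell+s}_{\mathbf m}\;\Om'\wedge d\varphi^{\ell+s}_{\mathbf m},
\]
where $\Om'$ is the analogous contraction form in the remaining variables. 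None of this cancellation computation appears in your proposal.

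Beyond that single step you would still need: (i) the iteration (or induction on the number of resonant eigenvalues) that peels off all of $\lambda_{\ell+1},\dots,\lambda_{n-k}$, producing the factor $H_{\mathbf M}=h^{\ell+1}_{\mathbf m_{\ell+1}}\cdots h^{n-k}_{\mathbf m_{n-k}}$ and the wedge of the differentials $d\varphi^{\ell+j}_{\mathbf m_{\ell+j}}$; this uses the fact, also unproved in your text, that every resonant eigenvalue admits a resonance relation involving only the non-resonant eigenvalues $\lambda_0,\dots,\lambda_\ell$ (the paper proves this by induction on the size of the eigenvalue, replacing any resonant term by its own non-resonant expression); and (ii) the final division of the non-resonant part $\Om_{NR}$ by $x_0\cdots x_\ell$ to obtain the logarithmic form $\eta_{NR}$, which is what makes $x_0\cdots x_\ell H_{\mathbf M}$ an integrating factor in the stated sense. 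As it stands there is no argument to evaluate, so the proposal must be counted as incomplete.
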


This expression gives an integrability condition with meromorphic closed
forms. Observe that the function
\[
G_{\mathbf{M}}(x_0,\dots,x_{\ell})=x_0\dots
x_{\ell}H_{\mathbf{M}}(x_0,\dots,x_{\ell}),
\]
is a holomorphic integrating factor of $\Om$.

Also observe that if $\mathbf{m\neq n}\in R(s)$, the differential
of rational function
$\varphi_{\mathbf{m}}^{\ell+s}/\varphi_{\mathbf{n}}^{\ell+s}$
divides the form $\Om_{NR}$.

Now, we need to globalize this expression to a neighborhood of a
Kupka set with transversal type $\Om$.

We continue with the notation of the Theorem
(\ref{resonatFF}), but we fix (and omit) the
subindexes $\mathbf{m}\in R(s)$ and $\mathbf{M}$, for the
functions $h^{l+s},\, H,$ and $G$.

First, we observe that the pole of the logarithmic non--resonant
part, is invariant by the foliation. Then, for all
$j=0,\dots,\ell,\quad
\{x_{\alpha,j}=g_{\alpha\beta}^jx_{\beta,j}\}$. Also, we denote by
$\zeta_{\alpha}=(x_{\alpha,0},\dots,x_{\alpha,\ell})$, then:
\[
h^{\ell+k}(\zeta_{\alpha})=h^{\ell+k}_{\alpha}\qquad
H(\zeta_{\alpha})=H_{\alpha}\qquad G(\zeta_{\alpha})=G_{\alpha}
\]

Set $\gamma_{\alpha\beta}=(g_{\alpha\beta}^0,\dots
,g_{\alpha\beta}^\ell)$. If we consider $\gamma_{\alpha\beta}$ as
a diagonal matrix, then we have the equation
$\zeta_{\alpha}=\gamma_{\alpha\beta}\zeta_{\beta}.$ We also
consider $\gamma_{\alpha\beta}$ as a vector andy we get the
equations:
\[
  h^{\ell+k}(\gamma_{\alpha\beta}): =  h_{\alpha\beta}^{\ell+k} \qquad
  H(\gamma_{\alpha\beta}) : =  H_{\alpha\beta}
  \qquad G(\gamma_{\alpha\beta}) : = G_{\alpha\beta}.
\]
and with this convention, we get the equations
\[
h^{\ell+k}_{\alpha}=h^{\ell+k}_{\alpha\beta}\cdot h^{\ell+k}_{\beta}\qquad
H_{\alpha}=H_{\alpha\beta}\cdot H_{\beta}\qquad
G_{\alpha}=G_{\alpha\beta}\cdot G_{\beta}.
\]
Thus, we have
\[
\frac{\Om_{\alpha}}{G_{\alpha}}=\frac{\lambda_{\alpha\beta}}{G_{\alpha\beta}}
\cdot\frac{\Om_{\beta}}{G_{\beta}}.
\]

Since the forms are closed and in the Poincar\'e domain, it
follows that the never vanishing holomorphic function
\[
\frac{\lambda_{\alpha\beta}}{G_{\alpha\beta}}=c_{\alpha\beta}\in\C^{\ast},
\]
but unfortunately not necessarily $1$.
With the same notation, we can prove:

\begin{theorem}\label{intfact} Let $K$ be a compact, connected Kupka component
with transversal type $\Om$ as described above. Then. There exists a neighborhood
of the Kupka set such that the foliation is represented by a meromorphic $n-k$
form of the type.
\[
 \eta=
\eta_{NR} \wedge \theta_{\ell +1}\wedge
\dots \wedge \theta_{n-k},
\]
where $\theta_j,\quad j=1,\dots ,n-k-\ell$ are closed meromorphic 1--forms.
\end{theorem}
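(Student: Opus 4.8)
The plan is to promote the local identity from Theorem \ref{resonatFF} to a neighborhood of the whole Kupka component by correcting the constant cocycle $\{c_{\alpha\beta}\}$ that obstructs the naive gluing. Starting from the charts $(U_\alpha,\phi_\alpha)$ of Theorem \ref{mede}, we have already written $\Om_\alpha/G_\alpha = (\lambda_{\alpha\beta}/G_{\alpha\beta})\,\Om_\beta/G_\beta$ and identified, via the Poincar\'e-domain argument (the positive eigenvalues force any first integral of $\mathbf X$ to be constant, since leaves accumulate at $0$), that $\lambda_{\alpha\beta}/G_{\alpha\beta}=c_{\alpha\beta}\in\C^\ast$. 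So $\{c_{\alpha\beta}\}\in \check H^1(\mathcal U,\underline{\C^\ast})$ is a locally constant cocycle on a neighborhood $U=\bigcup U_\alpha$ of $K$. Since $K$ is a deformation retract of a suitable tubular neighborhood $U$, we have $\check H^1(U,\underline{\C^\ast})\cong \check H^1(K,\underline{\C^\ast})\cong \mathrm{Hom}(\pi_1(K),\C^\ast)$, so $\{c_{\alpha\beta}\}$ is the monodromy representation of a flat line bundle.

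First I would isolate the non-resonant logarithmic factor $\eta_{NR,\alpha}=\Om_{NR,\alpha}/(x_{\alpha,0}\cdots x_{\alpha,\ell})$. Because the pole divisor $\{x_{\alpha,j}=0\}$ is $\F$-invariant, on overlaps $\eta_{NR,\alpha}=\big(\sum_{j}\lambda_j g^0_{\alpha\beta}\cdots\widehat{g^j_{\alpha\beta}}\cdots g^\ell_{\alpha\beta}\big)\big/\big(g^0_{\alpha\beta}\cdots g^\ell_{\alpha\beta}\big)\,\eta_{NR,\beta}$; but each $\eta_{NR,\alpha}$ is closed, so the transition factor is a first integral of the transversal field, hence constant, and since the two forms have the same poles with the same residues along each component, that constant is $1$. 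Thus $\{\eta_{NR,\alpha}\}$ already glues to a single closed meromorphic $\ell$-form $\eta_{NR}$ on $U$. The remaining factors $\varphi^{\ell+s}_\alpha = x_{\alpha,\ell+s}/h^{\ell+s}(\zeta_\alpha)$ transform, using $x_{\alpha,\ell+s}=a^{\ell+s}_{\alpha\beta}x_{\beta,\ell+s}$ on the invariant hypersurface $\{x_{\ell+s}=0\}$ and $h^{\ell+s}(\zeta_\alpha)=h^{\ell+s}_{\alpha\beta}h^{\ell+s}(\zeta_\beta)$, by $d\varphi^{\ell+s}_\alpha = (a^{\ell+s}_{\alpha\beta}/h^{\ell+s}_{\alpha\beta})\,d\varphi^{\ell+s}_\beta$ up to an exact/invariant-divisor correction; the resonance relation $\lambda_{\ell+s}=\langle\mathbf m^s,\Lambda_{NR}\rangle$ forces $a^{\ell+s}_{\alpha\beta}/h^{\ell+s}_{\alpha\beta}$ to be a constant $c^{\ell+s}_{\alpha\beta}$, and $\prod_s c^{\ell+s}_{\alpha\beta}=c_{\alpha\beta}$ matches the obstruction above.

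Next I would kill the constants. Since each $c^{\ell+s}_{\alpha\beta}$ is a locally constant $\C^\ast$-cocycle on $U\simeq K$, and such a class is realized by a flat (hence holomorphic, non-vanishing after passing to a finite refinement) line bundle, one can choose $u^{\ell+s}_\alpha\in\mathcal O^\ast(U_\alpha)$ — in fact locally constant — with $u^{\ell+s}_\alpha = c^{\ell+s}_{\alpha\beta}u^{\ell+s}_\beta$ when the class is trivial; in general one absorbs the monodromy into the $1$-form itself by setting $\theta_{\ell+s,\alpha}:=(1/u^{\ell+s}_\alpha)\,d\varphi^{\ell+s}_\alpha$ and checking $\theta_{\ell+s,\alpha}=\theta_{\ell+s,\beta}$ on overlaps. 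More honestly: the product $\theta_{\ell+1}\wedge\cdots\wedge\theta_{n-k}$ only needs to be well-defined up to the scalar $c_{\alpha\beta}$ that is exactly compensated by $\Om_\alpha/G_\alpha = c_{\alpha\beta}\,\Om_\beta/G_\beta$, so defining $\eta|_{U_\alpha}:=\Om_\alpha/G_\alpha = \eta_{NR}\wedge d\varphi^{\ell+1}_\alpha\wedge\cdots\wedge d\varphi^{n-k}_\alpha$ and observing that the two sides of the local formula in Theorem \ref{resonatFF} transform identically, the forms $\eta_\alpha$ patch to a global meromorphic $(n-k)$-form $\eta$ on $U$; setting $\theta_{\ell+s}:=d\varphi^{\ell+s}$ read in any chart (they differ by the harmless constants across charts, which can be normalized away since only a rank-one twist is involved and $H^1$ of the additive sheaf $\C$ on $K$ controls it) yields closed meromorphic $1$-forms with $\eta=\eta_{NR}\wedge\theta_{\ell+1}\wedge\cdots\wedge\theta_{n-k}$. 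Finally $\eta$ represents $\F$ because $\Om_\alpha$ does and they differ by the nowhere-zero meromorphic factor $G_\alpha$.

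The main obstacle is precisely the cocycle $\{c_{\alpha\beta}\}$: the local integrating factor $G_\alpha$ is intrinsic only up to a locally constant scalar, so the naive glued object is a meromorphic section of a flat line bundle rather than an honest form. The essential point is that this flat class is pulled back from $H^1(K,\underline{\C^\ast})$ and that the \emph{same} class appears in the transition of $\Om_\alpha/G_\alpha$, so it cancels in the foliated object; making the bookkeeping of the resonant monomials $h^{\ell+s}$ consistent across charts — i.e. verifying that $h^{\ell+s}(\zeta_\alpha)$ and $a^{\ell+s}_{\alpha\beta}$ conspire via the resonance relation to leave only a constant factor in $d\varphi^{\ell+s}$ — is the one computation that must be done carefully rather than waved through.
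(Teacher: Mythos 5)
Your setup (gluing the local normal form of Theorem \ref{resonatFF} over the Kupka charts and tracking the locally constant cocycle $c_{\alpha\beta}=\lambda_{\alpha\beta}/G_{\alpha\beta}$) is the same as the paper's, but the decisive step --- eliminating that cocycle --- is where your argument has a genuine gap. From $\Om_\alpha/G_\alpha=c_{\alpha\beta}\,\Om_\beta/G_\beta$ with $c_{\alpha\beta}\in\C^\ast$ not identically $1$, the assignment $\eta|_{U_\alpha}:=\Om_\alpha/G_\alpha$ does \emph{not} patch to a global form: the fact that both sides of the local formula transform by the same constant means precisely that you only obtain a section of a flat line bundle, which you yourself concede in your closing paragraph. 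Your proposed remedies --- choosing units $u_\alpha$ with $u_\alpha=c_{\alpha\beta}u_\beta$, or ``normalizing away'' the constants because ``$H^1$ of the additive sheaf $\C$ on $K$ controls it'' --- presuppose that the class of $\{c_{\alpha\beta}\}$ in $H^1(K,\underline{\C^{\ast}})$, i.e.\ the representation $\pi_1(K)\to\C^\ast$, is trivial; nothing in the hypotheses guarantees this, so in general no such $u_\alpha$ exist and no global $(n-k)$--form is produced. Note also that the statement only asks for \emph{some} meromorphic form of the product shape representing $\F$, not for $\Om/G$ itself --- this is exactly the freedom you need but never use.

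The paper's proof exploits that freedom: it proceeds by induction on the number of resonant eigenvalues, and at each step, after using the division property of $\eta_{NR}$ (or of the already constructed $\Theta=\eta_{NR}\wedge\theta_{\ell+1}\wedge\cdots\wedge\theta_{\ell+s}$) to deduce $d\varphi_\alpha=c_{\alpha\beta}\,d\varphi_\beta$ and hence $\varphi_\alpha=c_{\alpha\beta}\varphi_\beta+b_{\alpha\beta}$, it replaces $d\varphi_\alpha$ by the logarithmic derivative $d\log\varphi_\alpha$. This is insensitive to the multiplicative constants $c_{\alpha\beta}$ (and, once the additive constants are disposed of, the local $1$--forms agree on overlaps), so the closed forms $\theta_{\ell+s}$ glue on a neighborhood of $K$ with no topological hypothesis on $K$. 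The resulting $\eta_{NR}\wedge\theta_{\ell+1}\wedge\cdots\wedge\theta_{n-k}$ is only locally proportional to $\Om/G$ (by the meromorphic factor $\varphi_{\ell+1}\cdots\varphi_{n-k}$), but it still represents the foliation, which is all the theorem claims. To repair your write-up you would need either this logarithmic-derivative substitution or an independent proof that the monodromy class $\{c_{\alpha\beta}\}$ vanishes; the latter is not available in this generality.
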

\begin{proof}
The proof will be by induction on the number of ressonant eigenvalues.

Let $\mathcal{U}=\{U_{\alpha}\}$ a covering of the theorem (\ref{mede}) andy write
$\Om_{\alpha}$ in the local normal form.
\[
\Om_{\alpha}=\eta_{\alpha}\wedge d\varphi_{\alpha}^{\ell+1}\wedge \cdots \wedge d\varphi_{\alpha}^{n-k},
\]
where $\eta_{\alpha}$ is logarithmic with poles on the divisor associated to the non--resonant eigenvalues.

Assume that $\Lambda_{NR}=\{\lambda_0,\dots,\lambda_{n-k-1}\}$. Then for each $\alpha,\beta$ we have
\[
\eta_{\alpha}\wedge d\varphi_{\alpha}= c_{\alpha\beta}\eta_{\beta}\wedge d\varphi_{\beta}\quad c_{\alpha\beta}\in\C^{\ast}.
\]

Since $\eta_{\alpha}=\eta_{\beta}$, we get
\[
\eta_{\alpha}\wedge (d\varphi_{\alpha}-c_{\alpha\beta}d\varphi_{\beta})=0.
\]

The form $\eta_{\alpha}$ has the division property, then we get
\[
d\varphi_{\alpha}=c_{\alpha\beta}d\varphi_{\beta}\Rightarrow \varphi_{\alpha}=c_{\alpha\beta}\varphi_{\beta}+b_{\alpha\beta}
\Rightarrow \frac{d\varphi_{\alpha}}{\varphi_{\alpha}}=\frac{d\varphi_{\beta}}{\varphi_{\beta}}
\]

Let  $\theta_{\alpha}=d\log\varphi_{\alpha}$, we get a meromorphic form
$\theta$ defined on the neighborhood $\displaystyle U=\bigcup_{\alpha} U_{\alpha}$ of $K$.  We get
\[
\frac{\Om}{G}=\eta_{NR}\wedge \theta.
\]

Now, we assume that the result holds, if the number of resonant eigenvalues is $s$ and prove the theorem for $s+1$.

The local normal form is
\[
\eta_{\alpha}\wedge d\varphi_{\alpha}^{\ell+1}\wedge\cdots \wedge d\varphi_{\alpha}^{\ell+s+1}.
\]
with $\eta_{\alpha}=\eta_{\beta}$ the logarithmic part. The form
\[
\eta_{\alpha}\wedge d\varphi_{\alpha}^{\ell+1}\wedge\cdots\wedge d\varphi_{\alpha}^{\ell+s+1}=
c_{\alpha\beta}\cdot \eta_{\beta}\wedge d\varphi_{\beta}^{\ell+1}\wedge\cdots\wedge d\varphi_{\beta}^{\ell+s+1}\quad
c_{\alpha\beta}\in\C^{\ast}.
\]
By the induction hypotheses, we are able to write
\[
\eta_{\alpha}\wedge d\varphi_{\alpha}^{\ell+1}\wedge\cdots\wedge d\varphi_{\alpha}^{\ell+s}=
a_{\alpha\beta}\cdot \eta_{\beta}\wedge d\varphi_{\beta}^{\ell+1}\wedge\cdots\wedge d\varphi_{\beta}^{\ell+s}
\quad  a_{\alpha\beta}\in\C^{\ast},
\]
we replaced by a form of the type
\[
\eta\wedge \theta_{\ell+1}\wedge \cdots \wedge \theta_{\ell+s}\qquad\mbox{with}\qquad d\theta_j=0\qquad \ell+1\leq j\leq \ell+s.
\]

Then, we have a new meromorphic local model
\[
\omega_{\alpha}=\Theta\wedge d\varphi_{\alpha}^{\ell+s+1}
\]
where $\Theta=\eta\wedge \theta_{\ell+1}\wedge \cdots \wedge \theta_{\ell+s}$ and
with $\eta$ a logarithmic $\ell$--form. Therefore
\[
\omega_{\alpha}=\Theta\wedge d\varphi^{l+s+1}_{\alpha}=
a_{\alpha\beta}'\cdot \Theta\wedge d\varphi^{l+s+1}_{\beta}\quad a_{\alpha\beta}'\in\C^{\ast},
\]
As in the case of a single resonat eigenvalue, we have
\[
\Theta\wedge (d\varphi_{\alpha}^{\ell+s+1}-a_{\alpha\beta}'\cdot d\varphi_{\beta}^{\ell+s+1})=0\Rightarrow
\varphi_{\alpha}^{\ell+s+1}=a_{\alpha\beta}'\varphi_{\beta}^{\ell+s+1}+b_{\alpha\beta}\quad b_{\alpha\beta}\in\C.
\]

Then $d\log(\varphi_{\alpha}^{\ell+s+1})=d\log(\varphi_{\beta}^{\ell+s+1})$ and we define the form
\[
(\theta_{\ell+s+1})|_{U_{\alpha}}=\frac{d\varphi_{\alpha}^{\ell+s+1}}{d\varphi_{\beta}^{\ell+s+1}}=
\frac{d\varphi_{\beta}^{\ell+s+1}}{d\varphi_{\beta}^{\ell+s+1}}=(\theta_{\ell+s+1})|_{U_{\alpha}}.
\]
And finally, we get $\eta=\eta_{NR}\wedge \theta_{\ell+1}\wedge \cdots \wedge \theta_{\ell+s+1}$ with $d\theta_j=0$.
\end{proof}

As a conclusion, we have show the following result.

\begin{theorem}\label{NFK}
Let $\F$ be a foliation with a Kupka component with transversal
type
\[
\mathbf{X}=\sum_{j=0}^{n-k} \lambda_i z_i \frac{\partial}{\partial
z_i},\qquad \lambda_j\neq \lambda_i,\quad \lambda_i\in \mathbb{N}.
\]
Then there exists a neighborhood $U$ of the Kupka set such that:
\begin{enumerate}
  \item There exists a meromorphic, closed and decomposable $n-k$
  form defining $\F$ outside and invariant divisor
  \item The reduced invariant divisor $\mathcal{D}_{red}$ is a normal
  crossing divisor and it is the pole of the logarithmic
  non--resonant part.
\end{enumerate}

\end{theorem}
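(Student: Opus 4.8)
The plan is to read the statement off from the local normal form of Theorem~\ref{resonatFF} together with its globalization in Theorem~\ref{intfact}, so the work is entirely in verifying the three adjectives \emph{meromorphic}, \emph{closed} and \emph{decomposable} and in describing the polar divisor. First I would fix a covering $\{(U_\alpha,\phi_\alpha)\}$ of a neighborhood $U=\bigcup_\alpha U_\alpha\supset K$ by the charts of Theorem~\ref{mede} and split the eigenvalues into the non--resonant part $\Lambda_{NR}=\{\lambda_0<\cdots<\lambda_\ell\}$ and the resonant part $\Lambda_{R}=\{\lambda_{\ell+1}<\cdots<\lambda_{n-k}\}$. By Theorem~\ref{intfact} the foliation $\F|_U$ is then represented by the globally defined meromorphic form
\[
\eta=\eta_{NR}\wedge\theta_{\ell+1}\wedge\cdots\wedge\theta_{n-k},\qquad d\theta_j=0,
\]
where $\eta_{NR}=\Om_{NR}/(x_0\cdots x_\ell)$ is the logarithmic non--resonant part. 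Meromorphy is immediate, and $\eta$ is closed because it is a wedge of closed forms: every $\theta_j$ is closed by construction, while $\eta_{NR}$ is a sum, with constant coefficients, of the closed logarithmic $\ell$--forms $\frac{dx_0}{x_0}\wedge\cdots\widehat{\frac{dx_i}{x_i}}\cdots\wedge\frac{dx_\ell}{x_\ell}$.

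For decomposability the only point needing an argument is that $\eta_{NR}$ itself is decomposable. I would observe that $\eta_{NR}$ involves only $x_0,\dots,x_\ell$, hence is the pull--back under $(x,y)\mapsto(x_0,\dots,x_\ell)$ of an $\ell$--form on $\C^{\ell+1}$; an $\ell$--form in $\ell+1$ variables is always decomposable, so $\eta_{NR}=c\,\omega_1\wedge\cdots\wedge\omega_\ell$ for suitable $1$--forms, and by straightening the weighted Euler field $R=\sum_{i=0}^{\ell}\lambda_i x_i\frac{\partial}{\partial x_i}$ in the logarithmic coordinates $u_i=\log x_i$ one may take the $\omega_i$ to be closed logarithmic $1$--forms. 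Consequently $\eta=c\,\omega_1\wedge\cdots\wedge\omega_\ell\wedge\theta_{\ell+1}\wedge\cdots\wedge\theta_{n-k}$ is a wedge of $n-k$ closed meromorphic $1$--forms, which gives assertion (1).

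For assertion (2) I would note that every polar component of $\eta$ lies along one of the hyperplanes $\{x_j=0\}$; these are invariant by the linear model $\mathbf{X}$, hence by $\F$, and since the transition maps of Theorem~\ref{mede} rescale the coordinates $x_j$ (as in the proof of Theorem~\ref{normal}) they glue along $U$ to smooth divisors $D_j$, whose union is the invariant divisor $\mathcal{D}$; locally $\mathcal{D}_{red}$ is a union of coordinate hyperplanes, so it is a normal crossing divisor. Finally, the pole of $\eta_{NR}$ is exactly $D_0\cup\cdots\cup D_\ell$, the normal crossing divisor attached to the non--resonant eigenvalues, which is the last claim. The step I expect to cost the most bookkeeping is precisely this last one: checking that the local hyperplanes really do glue to a \emph{global} normal crossing divisor, and keeping precise track of which hyperplanes occur as poles of the globalized form and with which orders; this reduces to local computations, in particular to the identity $\eta_{NR}\wedge\frac{dx_i}{x_i}=(-1)^{\ell}\lambda_i\,\frac{dx_0}{x_0}\wedge\cdots\wedge\frac{dx_\ell}{x_\ell}$ valid for $0\le i\le\ell$, which controls the order of the poles created when the resonant factors $\theta_j$ are wedged in.
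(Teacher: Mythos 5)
Your proposal is correct and takes essentially the paper's route: Theorem \ref{NFK} is presented there as an immediate consequence of Theorems \ref{resonatFF} and \ref{intfact}, and your assembly of $\eta=\eta_{NR}\wedge\theta_{\ell+1}\wedge\cdots\wedge\theta_{n-k}$, its closedness, and the normal--crossing description of the polar divisor is exactly what the paper intends. One caution: your last step, asserting that $\eta$ is \emph{globally} a wedge of $n-k$ closed meromorphic $1$--forms, needs the chartwise factors $\omega_i$ of $\eta_{NR}$ to glue, and they are built from $dx^j_\alpha/x^j_\alpha$, which change by $d\log g^j_{\alpha\beta}$ across charts; so the decomposability should be read locally (pointwise), as the paper uses it.
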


\begin{example}
If the component has transversal type
\[
\mathbf{X}=\sum_{j=0}^{n-k}\lambda_j x_j\frac{\partial}{\partial
x_j} \quad \lambda_0=1,\quad \lambda_j<\lambda_{j+1}\in\mathbb{N}
\]
then $\Lambda_{NR}=\{\lambda_0\}$ andy the forma may be written as
\[
\Om= x_0^{N} d\left(\frac{x_1}{x_0^{\lambda_1}}\right)\wedge \dots \wedge
d\left(\frac{x_{n-k}}{x_0^{\lambda_{n-k}}}\right)\quad N=\sum_{j=0}^{n-k}\lambda_j
\]
\end{example}

\subsubsection{The radial case}
Now, we consider the case
\[
\mathbf{X}=\sum_{j=0}^{n-k} z_i\frac{\partial}{\partial z_i}
\]

Recall that a $(n-k)$ non singular foliation on a manifold $M$ has
\textit{a transversal projective structure}, if it is defined by
an Atlas of submersions $f_{\alpha}:U_{\alpha}\to \Po^{n-k}$ such
that $f_{\alpha}=\phi_{\alpha\beta}\circ f_{\beta}$ and
$\phi_{\alpha\beta}$ are restriction of projective transformations
on $\Po^{n-k}$.

The most simple example of transversally projective foliations, is
given by fibrations $F:M\to \Po^{n-k}$, another example are
generated by suspensions of a representation $\pi_1(M)\to
\mathbf{PGL}(n-k,\C)$.

The more general transversally projective foliations are a mixture
of these examples. It is given by the develop of the structure.

A singular holomorphic foliation has a projective transversal
structure if outside the singular set, there exists an invariant
divisor $D\subset M$ such that $\F|_{M-D-S(\F)}$ has a projective
transversal structure, see  \cite{Scardua}.


\begin{theorem}\label{RadLoc}
Let $K$ be a Kupka component with radial transversal type.
Then there exists a neighborhood $K\subset U$ such that $\F$
has a projective transversal structure.
\end{theorem}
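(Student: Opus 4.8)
The plan is to use the local structure theorem (Theorem \ref{mede}) together with the Poincar\'e--Dulac type linearization already available for the radial vector field, and then to glue the resulting local submersions into $\Po^{n-k}$ along transition maps lying in $\mathbf{PGL}(n-k+1,\C)$. First I would fix a covering $\{(U_\alpha,\phi_\alpha)\}$ of a neighborhood of $K$ as in Theorem \ref{mede}, so that on each $U_\alpha$ the foliation is represented by $\Om_\alpha=\phi_\alpha^*(\imath_{\mathbf X}\,dx_0\wedge\cdots\wedge dx_{n-k})$ with $\mathbf X=\sum_{i=0}^{n-k} z_i\,\partial/\partial z_i$ the radial field. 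The radial field is linearizable (indeed it is already linear in the transversal chart), and the leaves of $\imath_{\mathbf X}\,dx_0\wedge\cdots\wedge dx_{n-k}$ near $0\in\C^{n-k+1}$, punctured at the origin, are exactly the fibers of the linear projection $\pi:\C^{n-k+1}\setminus\{0\}\to\Po^{n-k}$. Hence each $U_\alpha$ (shrunk and with $K$ removed) carries a submersion $f_\alpha=\pi\circ x_\alpha:U_\alpha\setminus K\to\Po^{n-k}$ whose fibers are the leaves of $\F$.

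Next I would analyze the transition maps. On $U_{\alpha\beta}$ the changes of coordinates $\phi_{\alpha\beta}=\phi_\beta\circ\phi_\alpha^{-1}$ satisfy $\phi_{\alpha\beta}^*\eta=\lambda_{\alpha\beta}\eta$ for $\eta=\imath_{\mathbf X}dx_0\wedge\cdots\wedge dx_{n-k}$, and — exactly as in the proof of Theorem \ref{normal} — the induced action on the normal bundle preserves the radial field up to a scalar, $(\partial\phi_{\alpha\beta}/\partial x_\beta)_*\mathbf X = a_{\alpha\beta}\mathbf X$. A map whose linear part commutes with the radial field up to scalar, and which fixes the origin, must (after descending to $\Po^{n-k}$) induce a projective transformation; more precisely, $\pi\circ x_\alpha = g_{\alpha\beta}\circ\pi\circ x_\beta$ on $U_{\alpha\beta}\setminus K$ for some $g_{\alpha\beta}\in\mathbf{PGL}(n-k+1,\C)$. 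The cocycle identity $g_{\alpha\beta}g_{\beta\gamma}g_{\gamma\alpha}=\mathrm{id}$ follows from uniqueness of the germ of transversal type. This produces precisely the Atlas of submersions $f_\alpha:U_\alpha\setminus K\to\Po^{n-k}$ with projective transition maps, i.e. a transversal projective structure on $\F|_{U\setminus K}$, where $U=\bigcup_\alpha U_\alpha$; since $K$ is itself an invariant (singular) set of codimension $n-k+1\ge 2$, this is a projective transversal structure in the sense of \cite{Scardua}.

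The main obstacle I anticipate is controlling the higher-order terms of the transversal type: Theorem \ref{mede} only guarantees a germ $\mathbf X$, and a priori $\mathbf X$ is radial only as a linear part, so one must know that the full germ is analytically linearizable. For the radial linear part this is the classical Poincar\'e linearization theorem (the eigenvalues $1,\dots,1$ lie in the Poincar\'e domain and the only resonances are $\lambda_j=\sum m_i\lambda_i$ with $\sum m_i=1$, which do not obstruct linearization to the radial field), so the germ $\mathbf X$ is conjugate to the radial field and the above argument applies verbatim after that conjugation. A secondary technical point is checking that the locally defined submersions $f_\alpha$ extend continuously/meromorphically across $K$ or, more cleanly, simply accepting the structure on the complement of the invariant divisor $K$ as in the cited definition; I would take the latter route, so that no extension across $K$ is needed. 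The verification that $g_{\alpha\beta}$ is genuinely projective (not merely a biholomorphism of $\Po^{n-k}$) is immediate since $\mathrm{Aut}(\Po^{n-k})=\mathbf{PGL}(n-k+1,\C)$.
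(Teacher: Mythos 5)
Your overall strategy is viable and is genuinely different in presentation from the paper's: the paper blows up $M$ along $K$, computes $\sigma^{*}\eta=-(x^{0})^{n-k+1}\,dt^{1}\wedge\cdots\wedge dt^{n-k}$, so the strict transform is transverse to the exceptional divisor $E=\Po(\nu(K,M))$, a $\Po^{n-k}$--bundle over $K$ with compact fibers, and then the induced foliation is the suspension of a representation $\pi_{1}(K,\ast)\to\mathbf{PGL}(n-k+1,\C)$, which hands you a coherent projective atlas for free. You instead glue the leaf--space submersions $f_{\alpha}=\pi\circ x_{\alpha}$ directly. The two arguments rest on the same fact (the local leaf space of the radial model at a point of $K$ is all of $\Po^{n-k}$), and your treatment of the linearization issue via Poincar\'e (no resonances of order $\geq 2$ for $\lambda_{0}=\cdots=\lambda_{n-k}=1$) matches what the paper uses implicitly elsewhere. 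However, there are two genuine gaps in your gluing step.

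First, the justification you give for projectivity of the transitions is empty in the radial case: the identity $(\partial\phi_{\alpha\beta}/\partial x_{\beta})_{*}\mathbf{X}=a_{\alpha\beta}\mathbf{X}$ from the proof of Theorem \ref{normal} says that the linear part of the transition commutes with the radial matrix up to a scalar, and \emph{every} invertible matrix does that, so this constraint carries no information. The correct argument is the leaf--space one: at a point $p\in K\cap U_{\alpha\beta}$ every leaf of the local model passes through any neighborhood of $p$, so the local leaf space is the whole $\Po^{n-k}$, and a foliation--preserving germ $\phi_{\alpha\beta}$ induces on it the projectivization of the linear part of its normal component, hence an element of $\mathrm{Aut}(\Po^{n-k})=\mathbf{PGL}(n-k+1,\C)$; you gesture at this but do not prove it, and it is the actual content. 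Second, and more seriously, your claim that $f_{\alpha}=g_{\alpha\beta}\circ f_{\beta}$ holds on all of $U_{\alpha\beta}\setminus K$ with a single projective $g_{\alpha\beta}$ is only justified on connected components of $U_{\alpha\beta}$ that meet $K$ (there the identity theorem propagates the relation found near $K$). On a component of $U_{\alpha\beta}$ disjoint from $K$, the Medeiros charts are unconstrained: away from the singular locus the radial model is just a regular foliation, and the induced identification of leaf spaces is an arbitrary biholomorphism between open subsets of $\Po^{n-k}$, which need not be projective. To close this you must either choose the covering adapted to a tubular fibration of $K$ (so that every nonempty overlap component meets $K$), or pass to the blow--up as the paper does, where compactness of the fibers of $E\to K$ and the holonomy representation make the coherence of the projective atlas automatic.
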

\begin{proof}
Let $K$ be a compact connected Kupka
component of radial type andy let $\sigma:\widetilde{M}_K\rightarrow M$ be
the blow-up along $K$. Denote by $E:=\sigma^{-1}K$
the exceptional divisor. It is well known that $E=\Po(\nu(K,M))$, the
projectivization of the normal bundle of $K\subset M$.

Consider now the charts $\{U_\alpha,\phi_\alpha)\}$ of the theorem \ref{mede}, with coordinates
\[
\phi_{\alpha}: U_{\alpha}\rightarrow \C^{n-k+1}\times\C^{k-1}\quad \phi_{\alpha}(p)=(x_\alpha^0,x_\alpha^{1},\ldots,x_\alpha^{n-k},y_\alpha^{1},\ldots,y_\alpha^{k-1})
\]
of the adapted covering of $K$. The $(n-k)$--form $\Om_{\alpha}=\phi_{\alpha}^{*}(\eta)$, where
\[
\eta=\sum^{n-k}_{i=0}(-1)^{i}x_{i}dx^{0}\wedge\ldots\wedge\widehat{dx^{i}}
\wedge\ldots\wedge
dx^{n-k}
\]
as a vector field, the transversal type is
\[
\mathbf{X}=\sum^{n-k}_{i=0}x_{i}\frac{\partial}{\partial{x}_{i}}
\]

We consider the covering of the exceptional divisor
\[
U_{\alpha}\times \Po^{n-k}=\{(x_{\alpha},y_{\alpha},T):=(x_\alpha^0,\dots ,x_\alpha^{n-k},y_\alpha^1,\dots, y_\alpha^{k-1},[t^0:\cdots:t^{n-k}])\},
\]
and the blowing--up, in these local coordinates, is given by
\[
\widetilde{U_{\alpha}}=\{(x_\alpha,y_\alpha,T)\, |\,  x_\alpha^i t^j-x_\alpha^j t^i=0\quad \mbox{ for } i,j=0\dots ,n-k\}.
\]

Then, we have for example
\[
\sigma(x_\alpha^0,t^0,t^1,\ldots,t^{n-k},y_\alpha^{1},\ldots,y_\alpha^{k-1})=(x_\alpha^0,x_\alpha^0 t^1,x_{\alpha}^0t^2,
\ldots,x_\alpha^0t^{n-k},y_\alpha^{1},\ldots,y_\alpha^{k-1}),
\]
and then
\[
\sigma^{*}(\eta) = -(x_\alpha^0)^{n-k+1}\widetilde{\eta},\qquad
\widetilde{\eta}=dt^1\wedge \cdots\wedge dt^{n-k}
\]
where $\{x_\alpha^0=0\}$ respresented the local equation of the
exceptional divisor $E$ and $\widetilde{\eta}$ induces $\widetilde{\F}$, the strict
transformation of the foliation.

From this equation, we see that the strict transformed foliation $\widetilde{\F}$
is transversal to the exceptional divisor andy it defines the projective flat
connection on $\Po(\nu(K,M))$ andy the foliation
$\widetilde{\F}$ on the open set
\[
\widetilde{U}=\bigcup_{\alpha}\widetilde{U_{\alpha}}
\]
in non singular and defined by a representation
$\rho:\pi_{1}(K,\ast)\rightarrow \mathbf{PGL}(n-k,\C).$
So, it has a projective transverse structure.
\end{proof}

Transversally projective foliations, the structure is defined by the
Maurer--Cartan forms.
Therefore, on a neighborhood $U$ we have a family of meromorphic
1--forms for this structure.

Following \cite[pag- 132]{Kobayashi}, as a homogeneous space, the projective space $\Po^{n-k}$, is
$\mathcal{L}/\mathcal{L}_0$ where
\begin{eqnarray*}
\mathcal{L}   & = &\mathbf{PGL}(n-k,\C)=\mathbf{SL}(n-k+1,\C)/\mbox{center} \\
\mathcal{L}_0 & = & \left\{
\begin{pmatrix}
\mathbb{A}&0 \\ \mathbf{v} &a
\end{pmatrix}
\in \mathbf{SL}(n-k+1,\C)\right\}/\mbox{center},
\end{eqnarray*}
where $\mathbb{A}\in \mathbf{GL}(n-k,\C),\quad \mathbf{v}\in \C^{n-k}$ is a row $(n-k)$--vector,
$tr(\mathbb{A})=-a$,
\[
\mathcal{L}_1\left\{
\begin{pmatrix}
\mathbb{I}_{n}  & 0 \\ \mathbf{v} & 1
\end{pmatrix}
\| \mathbf{v}\in\C^n \mbox{ row vector}\right\}.
\]

The graded Lie Algebra $\mathfrak{l}=\mathfrak{g}_{-1}+\mathfrak{g}_0+\mathfrak{g}_1$
with this $\mathcal{L}/\mathcal{L}_0$, given by $\mathfrak{l}=\mathfrak{sl}(n+1,\C)$ andy
\[
\mathfrak{g}_{-1}=\left\lbrace \begin{pmatrix}
0 & \mathbf{v} \\ \mathbf{0} & 0
\end{pmatrix}   \right\rbrace
\quad
\mathfrak{g}_{0}=\left\lbrace \begin{pmatrix}
\mathbb{A} & \mathbf{0} \\ \mathbf{0} & a
\end{pmatrix}   \right\rbrace
\quad
\mathfrak{g}_{1}=\left\lbrace \begin{pmatrix}
\mathbf{0} & \mathbf{0} \\ \mathbf{v} & 0
\end{pmatrix}   \right\rbrace
\]

The invariant forms, are denoted by
\[
\{ \omega^i,\omega_i^j,\omega_j \| i,j= 1,\dots n-k \}
\]
and the structure equations are
\begin{eqnarray*}
   d\omega^i & = & -\sum \omega_k^i\wedge \omega^k   \\
 d\omega_j^i & = & -\sum \omega_k^i\wedge \omega_j^k-\omega^i\wedge \omega_j +
 \delta_j^i\sum \omega_k\wedge \omega^k      \\
   d\omega_j & = & -\sum \omega_k\wedge \omega_j^k,
\end{eqnarray*}

For the projective space $\Po^1$, the above equation looks like this
\begin{eqnarray*}
d\omega_{1} & = & \omega_{0} \wedge \omega^{1}, \\
d\omega_{0} & = & \omega^{1} \wedge \omega_{1}, \\
d\omega^{1} & = & \omega_{1} \wedge \omega_{0}.
\end{eqnarray*}

Therefore, on a neighborhood of the Kupka set, there is a family of meromorphic 1--forms
\[
\{\Om_i,\Om_{i}^{j},\Om^{j}\| i,j=1,\dots ,n-k\}
\]
obtained by the projection of the holomorphic 1--forms on the neighborhood of the exceptional divisor.

\vglue 10pt
\section{Baum-Bott theory.}
\label{sec:BB}

Let us recall the results of \cite{baum} and \cite{marcio} in a
general context. We need some preliminary notions.

A \textit{polynomial invariant function}  $F:M_{n\times n}(\C)\to \C$, is a function
which can be expressed as a complex polynomial in the entries of the
matrix, and satisfies
\[
F(\mathbb{A})=F(\mathbf{g}\cdot\mathbb{A}\cdot \mathbf{g}^{-1})
\]
for all $\mathbb{A}\in M_{n\times n}(\C)$ and $\mathbf{g}\in
GL(n,\C)$. It follows that $F$ define a function on $End(V)\to
\C$, for some complex vector space of dimension $n$.

The main example of a polynomial invariant functions, are the coefficients of
\[
 det(\mathbb{I}+t\mathbb{A} )=1+c_1(\mathbb{A})t+\cdots+c_n(\mathbb{A})t^n,
\]

Observe that $c_1(\mathbb{A})=tr(\mathbb{A})$ and $c_n(\mathbb{A})=det(\mathbb{A})$.

Now, Let
$\varphi\in\C[X_{1},\ldots,X_{n}]$ be a symmetric and
homogeneous of degree $n$. It defines an invariant function if it can be
expressed as
\[
\varphi(X_1,\dots,X_n)=\widetilde{\varphi}(c_1(X),\dots,c_n(X)),\quad
X=(X_1,\dots,X_n)
\]
for some polynomial $\widetilde{\varphi}$ and considering the
variables as the spectrum of a matrix.

Now, consider a 1 dimensional holomorphic foliation $\F$ on a compact complex manifold $M$ of complex dimansion $n$.
Such a foliation, is represented by a class $\mathbf{X}\in H^0(M,\Theta(L))$, where $L$ is a line bundle on $M$.

In this case, the tangent sheaf of $\F$ is the line bundle $L^{\ast}$ and the normal sheaf $N_{\F}$ may be considered as the
virtual bundle $TM-L$.

Assume that the singular set $S(\F)$ is a finite set of points. Now,
for any singular point $p\in S(\F)$, we take $z=(z_1,\dots,z_n)$ a coordinate
system around a singular point $p\in S(\F)$ such that $z(p)=0$, and
\[
\mathbf{X}(z)=\sum^{n}_{i=1}X_{i}(z)\frac{\partial}{\partial{z_{i}}},
\quad X_i(0)=0,
\quad J_{\mathbf{X}}(z)=\left(\frac{\partial X_i}{\partial z_j}(z)\right)
\]
be a holomorphic vector field representing $\F$ in a neighborhood
of $z(p)=0$.

The
Grothendieck point residue
\[
\mbox{Res}(\varphi(J_{\mathbf{X}}),\F,p)=\left(\frac{1}{2\pi
  i}\right)^n
\int_{\Gamma_{\epsilon}}\frac{\varphi(J_{\mathbf{X}})dz_{1}\wedge\ldots\wedge
  dz_{n}}{X_{1}(z)\ldots X_{n}(z)},
\]
where integration is over the $n$-cycle
\[
 \Gamma_{\epsilon}=\{z:|X_{i}(z)|=\varepsilon_{i}, 1\leq i\leq n \},\quad
 \epsilon=(\varepsilon_1,\ldots,\varepsilon_n)\in\mathbb{R}^{n}_{>0},\quad
 |\epsilon|<<1,
\]
oriented by declaring positive the form
$d(\ar{f_{1}})\wedge\ldots\wedge d(\ar{f_{n}})$.

It is not difficult see that this residue independent of the vector
field representing
$\F$ around $p$. We have the following.

\begin{theorem}[Baum-Bott \cite{baum}]\label{baum_teo}
Let $M$ be a compact complex manifold of complex dimension $n$. Let $\F$ be a holomorphic foliation by curves on $M$, represented by a meromorphic vector field $\mathbf{X}$. Then
\[
\varphi(N_{\F})[M]=\sum_{p\in  S(\F)}\mbox{Res}(\varphi(J_{X}),\F,p)
\]
\end{theorem}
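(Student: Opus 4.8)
The plan is to deduce the formula from Chern--Weil theory together with the Bott vanishing theorem for the normal bundle of a foliation, localizing the resulting top--degree characteristic form at the singular set $S(\F)$.

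First I would recall that, as virtual bundles on $M$, one has $N_{\F}=TM-L^{\ast}$ with $L^{\ast}=T_{\F}$ the tangent line bundle of the foliation by curves; hence $\varphi(N_{\F})[M]=\int_M\varphi(K)$ for the curvature $K$ of any $C^{\infty}$ connection on $N_{\F}$, and the integral is independent of the chosen connection. On the open set $M\setminus S(\F)$ the vector field $\mathbf{X}$ is nowhere zero, so the sequence $0\to T_{\F}\to TM\to N_{\F}\to 0$ splits smoothly there and $N_{\F}$ inherits the Bott partial connection along the one--dimensional leaf direction spanned by $\mathbf{X}$. I would extend it to a full connection $\nabla_0$ on $N_{\F}|_{M\setminus S(\F)}$ whose partial curvature along the leaves vanishes.

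The key input is Bott vanishing: since $\varphi$ is symmetric and homogeneous of degree $n$, and $n>\operatorname{cod}(\F)=n-1$, the Chern--Weil form $\varphi(K_0)$ of $\nabla_0$ vanishes identically on $M\setminus S(\F)$. Consequently $\varphi(N_{\F})[M]$ localizes at the singularities: taking disjoint coordinate balls $B_p$ around each $p\in S(\F)$, an arbitrary connection $\nabla_p$ on $B_p$, and gluing $\nabla_0$ with the $\nabla_p$ by a partition of unity, a \v Cech--de Rham (transgression) argument gives
\[
\varphi(N_{\F})[M]=\sum_{p\in S(\F)}\left(\frac{1}{2\pi i}\right)^{n}\int_{\partial B_p}\Delta_p ,
\]
where $\Delta_p$ is the $(2n-1)$--dimensional transgression form between $\nabla_0$ and $\nabla_p$ on $B_p\setminus\{p\}$, and only the adapted connection $\nabla_0$ contributes in the limit of small radius.

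Finally I would identify $\int_{\partial B_p}\Delta_p$ with the Grothendieck point residue $\mbox{Res}(\varphi(J_{\mathbf{X}}),\F,p)$. In coordinates $z$ with $\mathbf{X}(z)=\sum X_i(z)\,\partial/\partial z_i$, $X_i(0)=0$, the adapted connection on $N_{\F}=TM/\langle\mathbf{X}\rangle$ can be written explicitly in terms of $\mathbf{X}$ and $J_{\mathbf{X}}$; expanding $\varphi$ in elementary symmetric functions and using Newton's identities reduces the transgression integral to a combination of Bochner--Martinelli--type kernels built from $X_1,\dots,X_n$, which by the Cauchy/Grothendieck residue formula collapses exactly to the integral over the cycle $\Gamma_{\epsilon}=\{|X_i|=\varepsilon_i\}$ in the statement; the already noted independence of the residue from the local representative vector field ensures the local models patch consistently. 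The main obstacle is precisely this last step: the explicit local Chern--Weil computation identifying the transgression form of the Bott connection with the Grothendieck residue kernel is the technical heart of the Baum--Bott theorem, everything else being a formal consequence of Bott vanishing and the localization principle.
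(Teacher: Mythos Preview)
The paper does not give its own proof of this statement: Theorem~\ref{baum_teo} is quoted from Baum--Bott \cite{baum} as a black box and is immediately followed by the higher--dimensional residue theorem, also quoted, and then applied in Theorem~\ref{degK}. So there is nothing in the paper to compare your argument against.

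That said, your outline is the standard strategy of \cite{baum}: Bott vanishing for a basic connection on $N_{\F}|_{M\setminus S(\F)}$ forces the characteristic class $\varphi(N_{\F})$ in degree $n>\cod(\F)$ to localize at $S(\F)$, and a transgression/\v{C}ech--de Rham computation over small balls identifies each local contribution with a Grothendieck point residue. Your sketch is faithful to that approach and you correctly flag the genuinely hard step, namely the explicit identification of the local transgression integral with the Cauchy--Grothendieck residue over $\Gamma_{\epsilon}$. One small point: you should assume $S(\F)$ is a finite set (isolated singularities), as the paper does just before stating the theorem; otherwise the sum over $p\in S(\F)$ and the use of disjoint coordinate balls do not make sense, and one must instead pass to the more general localization of Theorem~\ref{baum_teo2}.
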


For higher dimensional holomorphic foliations, there is also a
residue theorem: let $\F$ be a $k$-dimensional holomorphic
foliation on an $n$ dimensional complex manifold $M$. Let $Z$ be a
connected component of the singular set $S(\F)$. We consider the
sequence of homomorphism

\begin{align}
H_{j}(Z,\C)\overset{\jmath_{*}}\rightarrow
H_{j}(M,\C)\overset{\rho}\rightarrow
H^{2n-j}(M,\C)\qquad j=0,1,\ldots,2n,
\end{align}
where $\jmath_{*}$ is induced by inclusion and $\rho$ is the Poincar\'e
duality isomorphism. Set $\alpha_{*}=\rho\circ \jmath_{*}$.

\begin{theorem}[Baum-Bott \cite{baum}]\label{baum_teo2}
Let $\F$ be a foliation of $\dim(\F)=k$ on a manifold $M$ of
complex dimension $n$. Let
$\varphi\in\mathbb{C}[X_{1},\ldots,X_{n}]$ be symmetric and
homogenous of degree $r$, with $n-k<r\leq n$, and $Z$ be a
connected component of $S(\F)$.Then there exists a homology class
$\mbox{Res}(\varphi,\F,Z)\in H_{2n-2r}(Z,\mathbb{C})$ such that
\begin{enumerate}
\item $\mbox{Res}(\varphi,\F,Z)$ depends only on $\varphi$ and on the
  local behavior of the leaves of $\F$ near $Z$.
\item $\varphi(N_{\F})=\sum_{Z}\alpha_{*}\mbox{Res}(\varphi,\F,Z)$.
\end{enumerate}
\end{theorem}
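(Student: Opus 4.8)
The plan is to follow the Chern--Weil localization strategy of Bott. Over $M\setminus S(\F)$ the normal sheaf $N_{\F}$ is a genuine holomorphic vector bundle of rank $\cod(\F)=n-k$, and the foliation equips it with a partial holomorphic connection along the leaves, the \emph{Bott connection}: for $\mathbf{Y}\in T_{\F}$ and a local section $s$ of $N_{\F}$ one sets $\nabla^{B}_{\mathbf{Y}}s=\pi([\mathbf{Y},\widetilde{s}])$, where $\widetilde{s}\in\Theta$ is any lift and $\pi:\Theta\to N_{\F}$ is the projection in the exact sequence \eqref{tangent}. Closedness of $T_{\F}$ under the Lie bracket makes $\nabla^{B}$ well defined and flat in the leaf directions. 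First I would extend $\nabla^{B}$ to a full $C^{\infty}$ connection $\nabla_{0}$ on $N_{\F}|_{M\setminus S(\F)}$ agreeing with $\nabla^{B}$ along $T_{\F}$.

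The decisive input is Bott's vanishing theorem. Since the curvature of an adapted connection annihilates the tangent directions of $\F$, it takes values in a subbundle of corank $\cod(\F)$, so any invariant polynomial $\varphi$ of degree $r>\cod(\F)=n-k$ satisfies $\varphi(\nabla_{0})\equiv 0$ on $M\setminus S(\F)$. This is precisely where the hypothesis $n-k<r$ enters. Next comes the localization: fix an arbitrary smooth connection $\nabla_{1}$ on a smooth global model of $N_{\F}$ over all of $M$ and cover $M$ by $U_{0}=M\setminus S(\F)$ and a neighborhood $U_{1}$ of $S(\F)$. On $U_{0}\cap U_{1}$ form the transgression form $\Delta\varphi(\nabla_{0},\nabla_{1})$, which satisfies $d\,\Delta\varphi(\nabla_{0},\nabla_{1})=\varphi(\nabla_{1})-\varphi(\nabla_{0})=\varphi(\nabla_{1})$.

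The pair $\bigl(\varphi(\nabla_{1}),\Delta\varphi(\nabla_{0},\nabla_{1})\bigr)$ is then a cocycle in the \v{C}ech--de Rham (mapping cone) complex computing the relative cohomology $H^{2r}(M,M\setminus S(\F);\C)$, and its class is independent of all choices. Writing $S(\F)=\bigsqcup_{Z}Z$ and localizing the cocycle near each connected component, the Alexander--Lefschetz duality isomorphism $H^{2r}(M,M\setminus Z;\C)\cong H_{2n-2r}(Z,\C)$ produces the residue class $\R(\varphi,\F,Z)\in H_{2n-2r}(Z,\C)$. Assertion (1) is then built in: $\nabla_{0}$ and the transgression form depend only on the germ of $\F$ along $Z$ and on $\varphi$, and a standard homotopy argument shows independence of $\nabla_{1}$ and of the chosen extension of $\nabla^{B}$. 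Assertion (2) is the statement that the absolute class $[\varphi(\nabla_{1})]=\varphi(N_{\F})\in H^{2r}(M,\C)$ is the image of the relative class under the restriction $H^{2r}(M,M\setminus S(\F))\to H^{2r}(M)$; tracing this map through Poincar\'e duality identifies it, component by component, with $\alpha_{*}=\rho\circ\jmath_{*}$, yielding $\varphi(N_{\F})=\sum_{Z}\alpha_{*}\R(\varphi,\F,Z)$.

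The main obstacle is the failure of $N_{\F}$ to be locally free across $S(\F)$: the Bott connection and its vanishing live only on $M\setminus S(\F)$, so one must fix a smooth global model for the virtual normal bundle $TM-T_{\F}$, verify that the resulting relative class is genuinely supported on $S(\F)$ and splits over its connected components, and check that none of the residue data depends on these auxiliary choices. Controlling the extension of $\nabla^{B}$ across the singular set compatibly with the degree-$r$ vanishing, and matching the duality pairing with the composite $\alpha_{*}$ introduced before the statement, is the technical heart of the argument.
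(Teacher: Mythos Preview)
The paper does not give a proof of this theorem: it is quoted from Baum and Bott \cite{baum} as a background result (note the attribution in the theorem heading) and is used as a black box in the proof of Theorem~\ref{degK}. So there is no ``paper's own proof'' to compare against.

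That said, your sketch is a faithful outline of the classical argument in \cite{baum}: Bott partial connection on $N_{\F}$ over the regular locus, Bott vanishing in degrees $r>n-k$, transgression between an adapted connection on $M\setminus S(\F)$ and a global smooth connection, interpretation of the resulting cocycle as a relative class in $H^{2r}(M,M\setminus S(\F);\C)$, and Alexander--Lefschetz duality to land in $H_{2n-2r}(Z,\C)$. The one point that deserves more care is your handling of the normal sheaf across $S(\F)$: rather than ``fixing a smooth global model for the virtual normal bundle $TM-T_{\F}$'', the standard route is to work with the virtual bundle in $K$-theory or, equivalently, to compute $\varphi(N_{\F})$ as $\varphi$ applied to $TM-T_{\F}$ via the splitting principle; the global connection $\nabla_{1}$ can then be taken on $TM$ and on a smooth extension of $T_{\F}$ separately. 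Your final paragraph correctly identifies this as the delicate step. None of this affects the present paper, which only invokes the statement.
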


We will use the above theorem in the case in that
$\deg(\varphi)=n-k+1$. The fact that
$\dim_{\C}S(\F)\leq k-1$, implies that only the
components of dimension $k-1$ of $S(\F)$ intervene. This is
because, since $\R(\varphi,\F,Z)\in  H_{2k-2}(Z,\C)$,
components of dimension smaller than $k-1$ contribute nothing.

\begin{theorem}\label{degK}
Let $\F\in \K_k(M,L)$ be a foliation with a compact connected
component $K$ and transversal type $\mathbf{X}$. Then
\[
[K]\cdot \mbox{Res}(c_1(J_{\mathbf{X}}))^{\cod(\F)+1},\mathbf{X},0)
=c_1(N_{\F})^{\mbox{cod}(\F)+1}.
\]
\end{theorem}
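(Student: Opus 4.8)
The strategy is to apply the Baum--Bott residue theorem for higher dimensional foliations (Theorem~\ref{baum_teo2}) with the invariant polynomial $\varphi = c_1^{\cod(\F)+1}$, which is symmetric and homogeneous of degree $r = \cod(\F)+1 = n-k+1$, so that $n-k < r \le n$ as required. By the remark following Theorem~\ref{baum_teo2}, only the components of $S(\F)$ of dimension exactly $k-1$ contribute to $\varphi(N_{\F})$, and by hypothesis $K$ is (a connected component that is) such a component. Thus $\varphi(N_{\F}) = \sum_{Z} \alpha_{*}\mathrm{Res}(\varphi,\F,Z)$ reduces, after pairing with the appropriate cohomology, to the contribution of $K$ (and possibly other $(k-1)$-dimensional components, which one treats identically or which are absent in the situations of interest). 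The left-hand side $\varphi(N_{\F}) = c_1(N_{\F})^{\cod(\F)+1}$ is exactly the right-hand side of the claimed formula.

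The heart of the matter is the computation of the local residue $\mathrm{Res}(\varphi,\F,K)\in H_{2k-2}(K,\C)$ and the identification of $\alpha_{*}\mathrm{Res}(\varphi,\F,K)$ with $[K]\cdot\mathrm{Res}(c_1(J_{\mathbf{X}})^{\cod(\F)+1},\mathbf{X},0)$. First I would use the local product structure of the Kupka set (Theorem~\ref{mede}): on a chart $(U_\alpha,\phi_\alpha)$ the foliation is $\phi_\alpha^{*}(\imath_{\mathbf{X}}dx_0\wedge\cdots\wedge dx_{n-k})$, and the leaves near $K$ look like $\mathcal{G}_{\mathbf{X}}\times\C^{k-1}$. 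Since the residue depends only on the local behavior of the leaves near $K$, the $\C^{k-1}$-factor is inert: the residue of the $k$-dimensional foliation along $K$ is the product of the fundamental class $[K]$ with the Grothendieck point residue $\mathrm{Res}(c_1(J_{\mathbf{X}})^{\cod(\F)+1},\mathbf{X},0)$ of the transversal vector field $\mathbf{X}$ at its isolated singularity $0\in\C^{n-k+1}$ — this is the Baum--Bott residue of the transversal type. Concretely, one writes the integral defining $\mathrm{Res}(\varphi,\F,K)$ over the cycle $\Gamma_\epsilon$, factors the integrand using the product coordinates $(x,y)$, and performs the transversal integration to obtain the scalar $\mathrm{Res}(c_1(J_{\mathbf{X}})^{\cod(\F)+1},\mathbf{X},0)$, leaving the residual class $[K]\in H_{2k-2}(K,\C)$.

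The main obstacle is making the product-decomposition argument for the residue rigorous: one must check that the Baum--Bott localization of $\varphi(N_{\F})$ along $K$ genuinely splits as the cup/cap product of the transversal point residue with the fundamental class of $K$, rather than picking up curvature contributions from the (possibly nontrivial) normal bundle $\nu(K,M)$ in the $K$-directions. The resolution is that $\varphi$ has degree $\cod(\F)+1 = n-k+1 = \mathrm{rk}\,\nu(K,M)$, so the residue lives in top transversal degree and the only surviving term is the one computed purely from the transversal data $\mathbf{X}$; the normal bundle enters only through $c_1(N_{\F})^{\cod(\F)+1}$ on the left. I would then push forward by $\alpha_{*}=\rho\circ\jmath_{*}$, use that $\jmath_{*}[K]\in H_{2k-2}(M,\C)$ Poincar\'e-dualizes to the fundamental cohomology class of $K\subset M$, and conclude
\[
c_1(N_{\F})^{\cod(\F)+1} = [K]\cdot\mathrm{Res}(c_1(J_{\mathbf{X}})^{\cod(\F)+1},\mathbf{X},0),
\]
which is the assertion.
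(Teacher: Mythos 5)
Your proposal is correct and follows essentially the same route as the paper: apply the higher--dimensional Baum--Bott theorem (Theorem \ref{baum_teo2}) with $\varphi=c_1^{\cod(\F)+1}$, note that only the $(k-1)$--dimensional components of $S(\F)$ contribute, and use the local product structure of Theorem \ref{mede} to identify $\mbox{Res}(c_1^{\,n-k+1},\F,K)$ with the Grothendieck point residue of the transversal vector field $\mathbf{X}$ at $0$, so that $\alpha_{*}$ of the residue class becomes $[K]\cdot\mbox{Res}(c_1(J_{\mathbf{X}})^{\cod(\F)+1},\mathbf{X},0)$. In fact you are somewhat more explicit than the paper about why the transversal factorization of the residue holds; the paper simply writes down the resulting Grothendieck integral (under the standing assumption $S_{k-1}(\F)=K(\F)$, matching your caveat about other $(k-1)$--dimensional components).
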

\begin{proof}
Suppose that $S_{k-1}(\F)=K(\F)$, where
$K(\F)$ compact connected Kupka set of transversal
type
\[
\mathbf{X}(x_0,\dots,x_{n-k})=
\sum^{n-k}_{i=0}X_i(x_0,\dots,x_{n-k})\frac{\partial}{\partial{x}_{i}},\quad
\cod(\F)=n-k.
\]

Set $\varphi=c^{n-k+1}_1(N_{\F})$. We have
\begin{equation}
\label{eq:equa1}
c^{n-k+1}_{1}(N_{\F})=\mbox{Res}(c^{n-k+1}_{1},\F,K(\F))[K(\F)],
\end{equation}
by Theorem \ref{baum_teo2}, but
\begin{eqnarray*}
 \mbox{Res}(c^{n-k+1}_{1},\F,K(\F))=
\left(\frac{1}{2\pi i}\right)^{n-k+1}
\int_{\Gamma_{\epsilon}}\frac{c^{n-k+1}_{1}(J_{\mathbf{X}}(0))dx_{0}
\wedge\ldots\wedge  dx_{n-k}}{X_0(x)\cdots X_{n-k}(x)},
\end{eqnarray*}
where integration is over the $n-k+1$-cycle
$\Gamma_{\epsilon}=\{z:|X_{i}(z)|=\varepsilon_{i}, 1\leq i\leq n \}$,
$\varepsilon=(\varepsilon_1,\ldots,\varepsilon_n)\in\mathbb{R}^{n}_{>0}$,
$|\varepsilon|<<1$.
\end{proof}

\vglue 10pt

\section{Holomorphic foliations on the projective space.}
\label{sec:Proj}

\subsection{The projective space.}
Recall the Euler sequence
\[
0\longrightarrow \mathcal{O}_n(-1) \longrightarrow
\bigoplus^{n+1}\mathcal{O}_n \longrightarrow
\Theta_n(-1)\longrightarrow 0,
\]
after taking the dual the $(n-k)$ exterior power, we get the exact
sequence
\[
0\longrightarrow
\Om^{n-k}_n(n-k)\longrightarrow
\bigwedge^{n-k}\left(\bigoplus^{n+1}\mathcal{O}_n\right)
\thickapprox\bigoplus^{\binom{n+1}{n-k}}\mathcal{O}_n
\longrightarrow \Om_n^{n-k-1}(n-k)\longrightarrow 0
\]

As a consequence, a global section $\Om$ of the sheaf
$\Om^{n-k}_{n}(c)$, may be thought as a polynomial $(n-k)$--form
on $\C^{n+1}$ with homogeneous coefficients of degree
$\deg_h(\Om)=c-n+k$, which we will still denote by $\Om$ and
satisfying
\[
\imath_{\mathcal{R}}\Om=0,\quad\text{ where }\quad
\mathcal{R}=x_{0}\frac{\partial}{\partial{x_{0}}}+
\dots+x_{n}\frac{\partial}{\partial{x_{n}}}
\]
is the radial vector field on $\C^{n+1}$.
Consequently, the space of sections $H^0(\Po^n,\Om_n^{n-k}(c)))$,
has dimension
\[
h^0(\Po^n,\Om_n^{n-k}(c))=\binom{c+k}{c}\cdot
\binom{c-1}{n-k},\quad c\geq n-k+1
\]
and $0$ otherwise. Therefore, only the classes $c\geq n-k+1$ could
carried foliations of dimension $k$.


Now, we introduce the notion of degree of a foliation
of the projective space. Let $\F$ be represented by a section
$\Om\in H^{0}(\Po^n,\Om^{n-k}_n(c)).$ We also assume that
$\dim_{\C}S(\F)\leq k-1$. Let $\ell:\Po^{n-k}\hookrightarrow\Po^n$
be a linear generic immersion with respect to $\F$, in particular,
$\ell(\Po^{n-k})$ does not hit the singular set. Now, consider the
pullback form $\ell^{*}(\Om)\in
H^{0}(\Po^{n-k},\Om^{n-k}_{n-k}(c))$. We observe that
$\Om^{n-k}_{n-k}(c)=\K_{n-k}(c),$ is a holomorphic line bundle, so
that $\K_{n-k}(c)=\mathcal{O}_{n-k}(d)$ for some $d\in\mathbb{Z}$.
Since $\K_{n-k} =\mathcal{O}_{n-k}(-(n-k+1))$, we get
$d=c-(n-k+1)$.

The zero divisor $\{\ell^{\ast}(\Om)=0\}$, reflects the set of
tangency between $\F$ and the linear subspace
$\ell(\Po^{n-k})\simeq\Po^{n-k}\subset\Po^n$. The \textit{degree}
of the foliation $\F$ denoted by $\deg(\F)$, is the degree of such
tangency divisor. By definition

\begin{equation}
  \label{eq:deg}
  \deg(\F)=c_1(N_{\F})-\cod(\F)-1=c_1(N_{\F})+\dim(\F)-n-1.
\end{equation}

Therefore, for foliations on the projective space, we have three
discrete invariants: The first Chern class of the normal bundle
$c_1(N_{\F})$ (or topological degree), the degree $\deg(\F)$ and
the homogeneous degree $\deg_h(\F)$ andy related by the formulaes
 \begin{eqnarray*}
     c_1(N_{\F}) & = & \deg(\F)+\cod(\F)+1, \\
   \deg_h(\F) & = & c_1(N_{\F})-\cod(\F)=\deg(\F)+1,
 \end{eqnarray*}

For instance, for any $n\geq2$ and for any codimension one
holomorphic foliation $\F$ on the projective space $\Po^n$, we
have the formula $c_1(N_{\F})=d_h(\F)+1=\deg(\F)+2$.

The example of a foliation $\F\in \F_k(n,n-k)$ and degree $\deg(\F)=0$ is
\[
\Om=\sum_{i=0}^{n-k} (-1)^ix_idx_0\wedge \cdots\wedge
\widehat{dx_i}\wedge \cdots\wedge dx_{n-k}=\imath_{\mathcal
R}dx_0\wedge\cdots \wedge dx_{n-k}
\]
whose leaves are the fibers of the linear fibration
\[
\Phi(z_0:\dots:z_n)=[z_0:\dots : z_{n-k}]:\Po^n\dashrightarrow\Po^{n-k}.
\]


It is well known that the singular set of a codimension one
foliation $\F$ in $\Po^n,$ has at least an irreducible component
in $S_{n-2}(\F)$ (see \cite{livro}). Recently, Corr\^ea Jr-
Fern\'andez P\'erez \cite{mauricio} proved that it is valid for
foliations of arbitrary dimension on projective manifolds.

\begin{theorem}\label{sing}
Let $\F\in\F_k(M,L)$ be a holomorphic foliation
such that
$\dim_{\C} S(\F)\leq  k-1$. Suppose that
$L$ is ample, then $S_{k-1}(\F)\neq\emptyset$
\end{theorem}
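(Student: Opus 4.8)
The plan is to reduce the statement to the classical fact that an ample line bundle on a projective manifold has positive self-intersection, using the Baum--Bott residue machinery developed in Section~\ref{sec:BB}. First I would argue that it suffices to treat the case $\dim(\F)=k$ with $\dim_{\C}S(\F)\le k-1$ exactly; shrinking the singular set only helps, so assume $S_{k-1}(\F)$ is the union of the top-dimensional components. Suppose for contradiction that $S_{k-1}(\F)=\emptyset$, so that $\dim_{\C} S(\F)\le k-2$.

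The key step is to apply Theorem~\ref{baum_teo2} with an invariant polynomial $\varphi$ of degree $r=n-k+1$, for instance $\varphi=c_1^{\,n-k+1}$, so that $\varphi(N_{\F})=c_1(N_{\F})^{n-k+1}\in H^{2(n-k+1)}(M,\C)$. The residue $\mathrm{Res}(\varphi,\F,Z)$ attached to a connected component $Z$ of $S(\F)$ lives in $H_{2n-2r}(Z,\C)=H_{2k-2}(Z,\C)$. Since every component $Z$ has complex dimension at most $k-2$, we have $H_{2k-2}(Z,\C)=0$, hence every residue vanishes and therefore
\[
c_1(N_{\F})^{n-k+1}=\sum_{Z}\alpha_{*}\mathrm{Res}(\varphi,\F,Z)=0 \quad\text{in } H^{2(n-k+1)}(M,\C).
\]
On the other hand $N_{\F}=\Theta/T_{\F}$ has $\det N_{\F}=L$, so $c_1(N_{\F})=c_1(L)$, and when $L$ is ample $c_1(L)$ is represented by a positive $(1,1)$-form; its $(n-k+1)$-st power is a nonzero class (it pairs positively against the class of a generic $(k-1)$-dimensional complete-intersection subvariety, e.g. by the hard Lefschetz / positivity of ample classes). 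This contradicts the vanishing above, so $S_{k-1}(\F)\neq\emptyset$.

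The main obstacle is making the residue-support argument airtight: one must check that the full Baum--Bott localization of Theorem~\ref{baum_teo2} applies even though the sheaves $T_{\F}$, $N_{\F}$ are not locally free along $S(\F)$, and that the chosen $\varphi$ of degree $n-k+1$ is an admissible symmetric polynomial (it is, since $n-k<n-k+1\le n$ as soon as $k<n$, which is assumed throughout). A secondary point is the positivity of $c_1(L)^{n-k+1}$ on $M$: rather than invoking anything delicate, I would note that an ample $L$ restricted to a general $(n-k+1)$-dimensional linear-type complete intersection (which exists by Bertini, since $M$ is projective via $L$) gives an ample bundle on a compact $(n-k+1)$-fold, whose top self-intersection is a strictly positive integer; pushing forward shows $c_1(L)^{n-k+1}\neq 0$ in $H^{2(n-k+1)}(M,\C)$. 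This completes the argument.
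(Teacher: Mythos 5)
Your argument is correct, and it is essentially the expected one: the paper itself does not prove Theorem \ref{sing} (it is quoted from \cite{mauricio}), but the localization mechanism you use is precisely the one the paper spells out immediately after Theorem \ref{baum_teo2}, namely that for $\varphi$ of degree $n-k+1$ the residues lie in $H_{2k-2}(Z,\C)$, so components of $S(\F)$ of dimension $\leq k-2$ contribute nothing; assuming $S_{k-1}(\F)=\emptyset$ then forces $c_1(N_{\F})^{n-k+1}=c_1(L)^{n-k+1}=0$, contradicting ampleness. Two small remarks: the positivity step can be done more directly, since $L$ ample on the compact manifold $M$ gives $\int_M c_1(L)^n>0$ and $c_1(L)^n=c_1(L)^{n-k+1}\smile c_1(L)^{k-1}$, so $c_1(L)^{n-k+1}\neq 0$ without any Bertini or complete-intersection argument; and the only hypothesis you are really leaning on is that the Baum--Bott residue theorem holds for $N_{\F}$ merely coherent (not locally free along $S(\F)$) and for $M$ compact -- both are built into the statement of Theorem \ref{baum_teo2} as the paper (following \cite{baum}) uses it, so no extra work is needed there.
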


\begin{remark}
The hypotheses of the ampleness of the line bundle $L=\det(N_
{\F})$ is not restrictive. This hypothesis is always satisfy for
foliations on the projective space.
\end{remark}

\subsection{Foliations with a Kupka component}
We give the main examples of $k$ dimensional
holomorphic foliations $\F$ with a compact Kupka component on the
projective space.

\begin{example}\label{ex:1}
Let $\{f_0,\dots ,f_{n-k}\},\quad 1<k<n,$ be a collection of
homogenous polynomials  in $\C^{n+1}$ of degree $\deg(f_j)=d_j$.

We assume that $\{f:=f_0\cdots f_{n-k}=0\}$ is a
normal crossings divisor of degree $c=d_0+\dots +d_{n-k}$. In
other words, for all $j=0,\dots n-k,$ we have holomorphic sections
$f_j\in H^0(\Po^n,\mathcal{O}_n(d_j))$ andy $f\in
H^0(\Po^n,\mathcal{O}_n(c))$.

Consider the homogeneous $(n-k)$--form in $\C^{n+1}$
defined by

\begin{eqnarray*}
\Om(z_0,\dots,z_n)& = &\sum_{j=0}^{n-k} (-1)^j d_j\cdot f_j(z_0,\dots,z_n)
df_0\wedge\cdots\wedge\widehat{df_j} \wedge \cdots\wedge df_{n-k} \\
& = & (f_0\dots f_{n-k}) \, \sum_{j=0}^{n-k} (-1)^j d_j
\left\{\frac{df_0}{f_0}
\wedge\cdots\widehat{\frac{df_j}{f_j}}\cdots
\wedge\frac{df_{n-k}}{f_{n-k}}\right\}.
\end{eqnarray*}

The homogeneous $(n-k)$--form $\Om$ has degree $c-n+k$. It
vanishes the radial vector field $\mathcal{R}$. In fact, we
observe that $\Om=\imath_{\mathcal{R}}df_0\wedge \cdots df_{n-k}$,
the contraction by the radial vector field $\mathcal{R}.$
Moreover, the $(n-k)$--form $\Om$, defines a foliation $\F\in
\F_k(n,c),$ whose leaves are the fibers of the rational ramified
fibration

\begin{eqnarray*}
\Phi: \Po^{n} & \dashrightarrow & \Po^{n-k} \\
   z & \mapsto & [f_0^{m_0}(z):\cdots:f_{n-k}^{m_{n-k}}(z)]
\end{eqnarray*}
where $\{m_0,\dots ,m_{n-k}\}$ are integers relatively prime
satisfying the equations
\[
m_0d_0=\cdots =m_{n-k}d_{n-k}=d\quad
\mbox{for some}\quad d\in \mathbb{N}.
\]

The foliation belongs to the set $\K_k(n,c).$ The Kupka set is
the complete intersection $K=\{f_0=\cdots=f_{n-k}=0\}$. This is
also the indetermination locus of the rational map
$\Phi:\Po^n\dasharrow \Po^{n-k}$.

It is clear that the set $\{f_0=\cdots=f_{n-k}=0\}\subset S(\Om)$.
Now, consider any $x\in\{f_0=\cdots=f_{n-k}=0\}$ and a direct
calculation gives
\[
d\Om(x)=\left(\sum_{j=0}^{n-k}d_j\right)df_0\wedge\cdots\wedge
df_{n-k}(x) = c\,df_0\wedge\cdots\wedge df_{n-k}(x)\neq0,
\]
this last inequality, follows from the transversally condition.
The submanifold $K$ has degree $\deg(K)=d_0\cdots d_{n-k}.$

Finally, we observe that the transversal type of the Kupka component
is the linear vector field
\[
\mathbf{X}(x_0,\dots,x_{n-k})=
\sum_{j=0}^{n-k} d_j\cdot x_j\frac{\partial}{\partial x_j}.
\]
\end{example}

The foliations described above are called \textit{rational
  components}. They are irreducible components of the space of
foliations if $2\leq k<n$ \cite{CPV}. We denote
them  by
\[ \mathcal{R}_k(n:d_0,\dots,d_{n-k})\subset
\F_k(n,c),\quad\mbox{where}\quad c=\sum_{j=0}^{n-k}d_j.\]

Observe that the rational component has $n-k+1$ discrete
parameters, the degree of the divisors involved
$(d_0,\dots,d_{n-k})\in\mathbb{N}^{n-k+1}$, which are related with
the first Chern class of the normal sheaf of the foliation and the
degree of its Kupka component.

In the codimension one case, there is a complete classification of
the set $K_{n-1}(n,c)$ when $n\geq3.$ (see Brunella
\cite{brunella} and Calvo--Andrade \cite{joseom, omegar}).

\begin{theorem} The set
$\K_{n-1}(n,c,d)=\{\F\in \K_{n-1}(n,c)| \mbox{deg}(K)=d \}\neq
\emptyset$ if and only if the system of equations
\[
  x \cdot y = d \quad  x + y = c
\]
has positive integers as solution, say
$(a,b)\in\mathbb{N}\times\mathbb{N}$. In this case, we have
\[
\K_{n-1}(n,c,d)=\mathcal{R}_{n-1}(n:a,b).
\]
\end{theorem}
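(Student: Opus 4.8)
The plan is to deduce the statement from the classification of codimension--one foliations on $\Po^n$, $n\geq 3$, admitting a compact Kupka component. Recall that this classification follows by combining Theorem \ref{CL} of Cerveau--Lins Neto with the affirmative solution of their complete intersection conjecture obtained by Brunella \cite{brunella} and Calvo--Andrade \cite{joseom, omegar}: for any $\F\in\K_{n-1}(n,c)$ the Kupka component $K$ is a complete intersection, and hence, by Theorem \ref{CL}, $\F$ lies in a rational component $\mathcal{R}_{n-1}(n:a_0,a_1)$ in the sense of Example \ref{ex:1}. I will take this as given; the rest is bookkeeping with degrees.

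For the implication $\Leftarrow$, suppose $(a,b)\in\mathbb{N}\times\mathbb{N}$ satisfies $ab=d$ and $a+b=c$. Choosing homogeneous polynomials $f_0,f_1$ of degrees $a$ and $b$ with $\{f_0f_1=0\}$ a normal crossing divisor (a generic choice works), Example \ref{ex:1} produces the foliation defined by the $1$--form
\[
\Om = a\, f_0\, df_1 - b\, f_1\, df_0,
\]
which belongs to $\mathcal{R}_{n-1}(n:a,b)\subset\F_{n-1}(n,c)$ and whose Kupka set is the complete intersection $K=\{f_0=f_1=0\}$. Thus $\deg(K)=ab=d$ and $c_1(N_{\F})=a+b=c$, so $\F\in\K_{n-1}(n,c,d)$; in particular this set is nonempty and $\mathcal{R}_{n-1}(n:a,b)\subseteq\K_{n-1}(n,c,d)$.

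For the implication $\Rightarrow$ together with the identification of the set, let $\F\in\K_{n-1}(n,c,d)$ with compact connected Kupka component $K$. By the classification recalled above, $K=\{f_0=f_1=0\}$ for reduced irreducible homogeneous polynomials $f_0,f_1$ of degrees $a_0,a_1$, and $\F\in\mathcal{R}_{n-1}(n:a_0,a_1)$. Since $K$ is a complete intersection of two hypersurfaces of those degrees, $\nu(K,\Po^n)\simeq\mathcal{O}_K(a_0)\oplus\mathcal{O}_K(a_1)$, so Proposition \ref{subcanonical} gives $c=c_1(N_{\F})=a_0+a_1$ (using that the hyperplane class restricts nontrivially to $K$ for $n\geq 3$), while $\deg(K)=a_0a_1=d$. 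Hence $(a_0,a_1)$ is a positive integer solution of $xy=d$, $x+y=c$, which proves the equivalence. Moreover $a_0$ and $a_1$ are exactly the two roots of $t^2-ct+d=0$, so $\{a_0,a_1\}=\{a,b\}$ and $\mathcal{R}_{n-1}(n:a_0,a_1)=\mathcal{R}_{n-1}(n:a,b)$. Combining this with the inclusion obtained in the $\Leftarrow$ step yields $\K_{n-1}(n,c,d)=\mathcal{R}_{n-1}(n:a,b)$.

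The only substantial input is the external classification theorem --- equivalently, the complete intersection property of $K$ together with Theorem \ref{CL}; granting it, what remains is elementary, the key points being the degree relations $\deg(K)=a_0a_1$, $c_1(N_{\F})=a_0+a_1$ and the fact that $t^2-ct+d$ determines its pair of roots. The one place that deserves care is the identification $\F\in\mathcal{R}_{n-1}(n:a_0,a_1)$ in the necessity step: starting from the rational first integral $\Po^n\dashrightarrow\Po^1$ furnished by Theorem \ref{CL}, one must normalize it as a ramified pencil $[f_0^{m_0}:f_1^{m_1}]$ with $f_0,f_1$ reduced irreducible and $m_0,m_1$ coprime, $m_0a_0=m_1a_1$, so that $\F$ genuinely matches the description in Example \ref{ex:1}.
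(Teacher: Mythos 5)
Your proposal is correct and follows exactly the route the paper intends: the theorem is stated there as a recollection of the codimension--one classification, obtained by combining Theorem \ref{CL} of Cerveau--Lins Neto with the complete intersection property of $K$ due to Brunella and Calvo--Andrade, plus the degree bookkeeping $\deg(K)=a_0a_1$, $c_1(N_{\F})=a_0+a_1$ that you carry out. Your flagging of the normalization of the rational first integral as a ramified pencil is the right point of care, and nothing further is needed.
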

For any $\mathbb{Z}\ni c\geq 2$ the set $\K_{n-1}(n,c)$ has
$[c/2]$ irreducible components.
In the codimension one case, the transversal type is always of the
type
\[
\mathbf{X}_{pq}=px\frac{\partial}{\partial x}+qy\frac{\partial}{\partial
  y}\quad p,q\in\mathbb{N},\quad\mbox{where}
\quad \begin{cases} \mbox{radial} \quad &p=q=1\\ \mbox{non--resonant}\quad &(p,q)=1 \\ \mbox{resonant}\quad & 1=p<q
\end{cases}
\]
and we get $a=pc/p+q$ and $b=qc/p+q.$

An important intermediate problem, towards the classification of
the $k$ dimensional foliations with a Kupka component on the
projective space, is the following.

\begin{problem} Given $\F\in \K_k(n,c)$. Classify all the possible
transversal type of its Kupka component.
\end{problem}

\vglue 10pt

\section{Rational Fibrations. }
\label{sec:maintheorem}

\subsection{Geometrical Preliminary results}

We begin this section, with some results on the projective space.

There are two classes of theorems which are of our interest about submanifolds of a given manifold $M$:

Given $X\subset M$ a smooth submanifold and let $X\subset
U\subset M$ be an open neighborhood of $X$. The property that we need is
that any meromorphic object defined in $U$, may be extended to an
object on $M$.

The first results in this direction is the following.

\begin{theorem}[\cite{barth}, \cite{rossi}]\label{extension}
Let $Y$ be a connected analytic subset of $\Po^{n},\quad n\geq 2$,
with $\dim{Y}\geq 1$. Then any meromorphic function in a connected
neighborhood of $Y$ extends to a meromorphic function on all of
$\Po^{n}$.
\end{theorem}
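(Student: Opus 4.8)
The plan is to reduce the statement to a comparison between a genuine meromorphic function near $Y$ and its formal expansion along $Y$, and then to exploit the positivity of $\mathcal{O}_{\Po^n}(1)$. First I would record that, by Chow's theorem, every meromorphic function on $\Po^n$ is rational, so $\mathcal{M}(\Po^n)=\C(\Po^n)$; the content of the theorem is therefore that a meromorphic germ defined near $Y$ is already rational. I would also note the elementary fact that, since $\dim Y\geq 1$ and $n\geq 2$, the projective dimension theorem forces $Y$ to meet every hyperplane of $\Po^n$. This already explains why the hypothesis $\dim Y\geq 1$ is indispensable: near a single point the local ring $\mathcal{O}_{\C^n,0}$ contains holomorphic, hence meromorphic, germs such as $e^{z_1}$ that are not rational, so the conclusion fails for $\dim Y=0$.

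Second, I would reduce to the case that $Y$ is an irreducible curve $C$. Choosing an at least one-dimensional irreducible component of $Y$ and cutting it by generic hyperplanes produces (by Bertini) an irreducible curve $C\subset Y$. Since $U$ is a connected neighborhood of $Y\supseteq C$, it is in particular a connected neighborhood of $C$, and any global meromorphic extension of $f$ produced from the data near $C$ will agree with $f$ on a neighborhood of $C$ and hence, by the identity theorem on the connected set $U$, on all of $U$. Thus it suffices to extend a meromorphic function defined near a connected curve $C\subset\Po^n$.

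Third, which is the heart of the argument, I would compare $f$ with its formal expansion $\hat f$ along $C$. Writing $f=g/h$ locally and passing to the formal completion $\widehat{\mathcal{O}}_{\Po^n,C}$, the germ $\hat f$ is a ratio of formal sections of powers of $\mathcal{O}(1)$. Because $\mathcal{O}_{\Po^n}(1)|_C$ is ample, Barth's formal-principle comparison (equivalently, the theorem on formal functions together with Kodaira-type vanishing) gives that the restriction map $H^0(\Po^n,\mathcal{O}(d))\to \varprojlim_m H^0\big(\Po^n,\mathcal{O}(d)\otimes\mathcal{O}/\mathcal{I}_C^{\,m}\big)$ is an isomorphism for every $d$. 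Hence the formal numerator and denominator of $\hat f$ are the formal expansions of genuine global sections, so $\hat f$ is the expansion of a rational function $\tilde f\in\C(\Po^n)$. Finally $f$ and $\tilde f$ are two meromorphic functions near $C$ with the same infinite jet along $C$, so they have the same germ at a smooth point of $C$ (the completion of a local ring being injective), and therefore coincide near $C$; thus $\tilde f$ is the desired global extension.

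The main obstacle is precisely the formal comparison isomorphism of the third step: this is where the positivity of the normal behavior of $C$, encoded by the ampleness of $\mathcal{O}(1)$ together with $\dim C\geq 1$, must genuinely be used, and it is the substance of the Barth--Rossi results. An alternative, more analytic route would be to construct $\tilde f$ directly by Hartogs--Levi continuation, extending $f$ across compact pieces of $\Po^n\setminus U$ while using the fact that $C$ reaches every hyperplane to ensure that the continuation remains single-valued and ultimately covers all of $\Po^n$.
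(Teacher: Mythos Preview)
The paper does not prove this statement; it is cited from Barth and Rossi and used as a black box later (in the proof of Theorem~1). There is no proof in the paper to compare your proposal against.

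As for the sketch itself: the reduction to an irreducible curve is fine, and the overall plan---compare $f$ with a formal object along $C$ and invoke positivity---is closer to the algebraic approach of Hironaka--Matsumura and SGA2 than to the actual proofs of Barth (via $q$-convexity and Andreotti--Grauert finiteness) or Rossi (via continuation of the graph of $f$ as an analytic subset). That is a legitimate alternative route, but your third step contains a real gap. You assert that ``the germ $\hat f$ is a ratio of formal sections of powers of $\mathcal{O}(1)$''. This does not follow from writing $f=g/h$ \emph{locally}: local holomorphic numerators and denominators do not patch to sections of any line bundle on $U$, nor to formal sections of a fixed $\mathcal{O}(d)$ along $C$; the obstruction lies in $\mbox{Pic}(U)$ (equivalently in the formal Picard group along $C$), and controlling it is precisely the content of the theorem. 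The comparison map $H^0(\Po^n,\mathcal{O}(d))\to\varprojlim_m H^0\big(\mathcal{O}(d)\otimes\mathcal{O}/\mathcal{I}_C^{\,m}\big)$ that you invoke is indeed an isomorphism, but it only helps \emph{after} one has manufactured a formal section of some $\mathcal{O}(d)$ out of $f$, and that step is missing. When you then say this comparison ``is the substance of the Barth--Rossi results'' the argument becomes circular; a non-circular formal-geometric proof must establish directly that the field of formal-rational functions along $C$ coincides with $\C(\Po^n)$ (the G3 condition), which requires more than the section-level comparison you wrote down.
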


On the other hand, given a smooth submanifold $X\subset \Po^n$,
the normal bundle $\nu(X,\Po^n)$ impose some conditions on the
embedding of $X \hookrightarrow \Po^n$. In this direction, there
are some results that we are going to use.

\begin{theorem}[\cite{Bad}]\label{NBSM}
Let $X\subset \Po^n$ be a smooth submanifold. Assume that
$\dim(X)=d\geq 3$ and $n=2d-1$. If the normal bundle
$\nu(X,\Po^n)$ decompose as a direct sum of line bundles, then
$Pic(X)=\Z\cdot [\mathcal{O}_X(1)]$
\end{theorem}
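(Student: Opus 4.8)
The plan is to prove that the restriction map $Pic(\Po^n)=\Z\cdot\mathcal{O}(1)\to Pic(X)$ is an isomorphism; it is injective because $\mathcal{O}_X(1)$ is ample, hence non-torsion, so only surjectivity needs to be shown. Write $d=\dim X\ (\geq 3)$; then $X\subset\Po^n$ has codimension $d-1$ and $n=2d-1$, i.e. $2\dim X\geq n+1$. I would factor the restriction through the formal completion $\widehat{\Po^n}_{/X}$ of $\Po^n$ along $X$,
\[
Pic(\Po^n)\longrightarrow Pic(\widehat{\Po^n}_{/X})\longrightarrow Pic(X),
\]
and argue that each arrow is an isomorphism.

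For the right-hand arrow I would use the infinitesimal neighbourhoods $X_m$ of $X$ (defined by $\mathcal{I}^{m+1}$, with $\mathcal{I}$ the ideal sheaf), so that $Pic(\widehat{\Po^n}_{/X})=\varprojlim_m Pic(X_m)$. Since $X$ is smooth, $\mathcal{I}/\mathcal{I}^2=N^{\ast}$ with $N:=\nu(X,\Po^n)$ locally free and $\mathcal{I}^m/\mathcal{I}^{m+1}\simeq S^mN^{\ast}$; the exponential-type exact sequence $0\to S^mN^{\ast}\to\mathcal{O}^{\ast}_{X_m}\to\mathcal{O}^{\ast}_{X_{m-1}}\to 1$ of sheaves of abelian groups gives
\[
H^1(X,S^mN^{\ast})\to Pic(X_m)\to Pic(X_{m-1})\to H^2(X,S^mN^{\ast}).
\]
Now $N$ is a quotient of $T\Po^n|_X$, itself a quotient of $\mathcal{O}_X(1)^{\oplus(n+1)}$ by the Euler sequence, so $N$ is ample; under the hypothesis $N=\bigoplus_{i=1}^{d-1}L_i$ each $L_i$ is a quotient of $N$, hence ample, so $S^mN^{\ast}$ is a direct sum of line bundles $L_1^{-a_1}\otimes\cdots\otimes L_{d-1}^{-a_{d-1}}$ with $\sum a_i=m$, each the inverse of an ample line bundle. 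Kodaira vanishing together with Serre duality give $H^q(X,A^{-1})=0$ for every ample $A$ and every $q<\dim X=d$, and since $d\geq 3$ this annihilates $H^1$ and $H^2$ of every $S^mN^{\ast}$. Hence each transition map $Pic(X_m)\to Pic(X_{m-1})$ is an isomorphism, and passing to the inverse limit (the $\varprojlim^1$-term vanishing by Mittag--Leffler, again because these $H^1$'s vanish) yields $Pic(\widehat{\Po^n}_{/X})\simeq Pic(X)$. It is precisely here that decomposability of $\nu(X,\Po^n)$ is indispensable: without it $S^mN^{\ast}$ has rank growing with $m$ and no uniform vanishing of $H^1,H^2$ is available, and the hypothesis $\dim X\geq 3$ is exactly what lets Kodaira vanishing reach cohomological degrees $1$ and $2$.

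For the left-hand arrow, $X$ is a smooth — in particular local complete intersection — subvariety of $\Po^n$ with $2\dim X\geq n+1$, so I would invoke the formal Lefschetz theorem in this range (Faltings; see also \cite{Bad}): the restriction $Pic(\Po^n)\to Pic(\widehat{\Po^n}_{/X})$ is an isomorphism. Composing the two isomorphisms identifies $Pic(X)$ with $Pic(\Po^n)=\Z$, generated by $\mathcal{O}_X(1)$. I expect the genuine obstacle to be exactly this last input: the comparison of the Picard group of $\Po^n$ with that of the formal completion along $X$ is where the numerical condition $n=2d-1$ is used, and one must be careful to extract a true isomorphism rather than merely an injection. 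Everything else is a fairly routine computation with infinitesimal neighbourhoods combined with Kodaira vanishing.
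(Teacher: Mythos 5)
The paper does not actually prove this statement: Theorem \ref{NBSM} is quoted verbatim from B\u{a}descu \cite{Bad}, so there is no in-paper argument to compare with, and what you wrote is a reconstruction of the cited result. Your inner step is correct and complete: with $X_m$ the infinitesimal neighbourhoods one has $0\to S^mN^{\ast}\to\mathcal{O}^{\ast}_{X_m}\to\mathcal{O}^{\ast}_{X_{m-1}}\to 1$; each summand $L_i$ of $N=\nu(X,\Po^n)$ is ample (a direct summand, hence quotient, of the ample bundle $T\Po^n|_X$ restricted and quotiented), so Kodaira vanishing $H^q(X,A^{-1})=0$ for $q<d$ kills $H^1$ and $H^2$ of every $S^mN^{\ast}$, giving $Pic(\widehat{\Po^n}_{/X})\simeq Pic(X)$; this is precisely where the splitting and $d\geq 3$ are used (for $d=2$, $n=3$ the normal bundle is a line bundle, so it always "splits", and the conclusion fails, e.g.\ for a quadric surface), and the $\varprojlim^1$ issue is disposed of as you indicate, since $H^0(X,S^mN^{\ast})=0$ and $H^1(X,S^mN^{\ast})=0$ make the system $H^0(X_m,\mathcal{O}^{\ast}_{X_m})$ Mittag--Leffler.

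The genuine issue is the outer arrow, which you invoke rather than prove, and it is not a routine input: the bijectivity of $Pic(\Po^n)\to Pic(\widehat{\Po^n}_{/X})$ for a local complete intersection $X$ with $2\dim X\geq n+1$ is a deep algebraization theorem of Faltings (Formale Geometrie und homogene R\"aume, Invent.\ Math.\ 64 (1981); treated systematically in B\u{a}descu's book on projective and formal geometry), and the case you need, $n=2d-1$, sits exactly at the boundary of its range: for $2\dim X\geq n+2$ Barth--Larsen already yields $Pic(X)=\Z\cdot[\mathcal{O}_X(1)]$ with no hypothesis on the normal bundle, so the entire content of the theorem lives at $2\dim X=n+1$. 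You must therefore make sure that what you quote is the surjectivity (Leff/algebraization) statement at this boundary, not merely an injectivity or Lef-type statement; the extension theorem for meromorphic functions used elsewhere in this paper (Theorem \ref{extension}) is far too weak to supply it. Granting that input your reduction does close up --- note that once a formal line bundle is algebraized on some Zariski neighbourhood $U\supset X$ it automatically comes from $\Po^n$, since $\mathrm{Cl}(\Po^n)\to \mathrm{Cl}(U)$ is surjective --- and this two-step passage through the formal completion is, as far as one can tell, essentially the route of the cited source, so your proposal is a faithful outline whose hardest ingredient is left as a citation rather than a different proof.
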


It follows from this result, that the normal bundle extends to
$\Po^n$ and $X$ is a complete intersection.

\begin{maintheorem}\label{maintheo}
Let $\F\in \K_k(n,c)$ be a holomorphic foliation with
$S_{k-1}(\F)=K(\F)$ and transversal type
\[
\mathbf{X}=\sum^{n-k}_{i=0}\lambda_{i}x_{i}\frac{\partial}{\partial{x}_{i}},
\quad\lambda_i\in\mathbb{N}\quad \lambda_{i}\neq \lambda_j, \mbox{
for all } i\neq j
\]
Then the leaves of the foliation are the fibers of a rational
fibration and it is represented by a $(n-k)$--form of the type
\[
\Om(z_0,\dots,z_n)=\sum^{n-k}_{i=0}(-1)^i \lambda_{i}\cdot
f_{i}(z_0,\dots,z_n) df_{0}
\wedge\ldots\wedge\widehat{df_{i}}\wedge\ldots\wedge df_{n-k}.
\]
\end{maintheorem}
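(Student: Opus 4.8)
The plan is to use the local normal form of $\F$ near its Kupka component to produce a meromorphic first integral on a neighbourhood of $K=K(\F)$, extend it to all of $\Po^n$ by the Barth--Rossi theorem, and then reconstruct the announced polynomial form from the resulting rational map. So I first localize near $K$. By hypothesis $K$ is compact and connected, and by Theorem \ref{mede} it is a smooth submanifold of dimension $\dim(\F)-1=k-1\ge 1$ (here $k>1$ is used), with transversal type the diagonal field $\mathbf X=\sum_{i=0}^{n-k}\lambda_i x_i\,\partial/\partial x_i$, the $\lambda_i$ distinct positive integers. Hence Theorems \ref{intfact} and \ref{NFK} and formula \eqref{FNVK} apply: there is a connected neighbourhood $U\supset K$ on which $\F$ is represented by a closed, decomposable meromorphic $(n-k)$--form whose polar divisor is reduced and has normal crossings, of the shape $\eta_{NR}\wedge\theta_{\ell+1}\wedge\cdots\wedge\theta_{n-k}$ with $\eta_{NR}$ the logarithmic non--resonant part and $d\theta_j=0$. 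Because the residues of $\eta_{NR}$ along its polar hypersurfaces are the \emph{integers} $\lambda_0,\dots,\lambda_\ell$, the logarithmic part admits single--valued meromorphic first integrals (the monomial ratios $x_i^{\lambda_0}/x_0^{\lambda_i}$ of the transversal model glue on $U$); combining these with the functions $\varphi^{\ell+1},\dots,\varphi^{n-k}$ of \eqref{FNVK}, for which $\theta_{\ell+j}=d\log\varphi^{\ell+j}$, exactly as in the codimension--one case (\cite{cerveau}, \cite{joseom}), one obtains a meromorphic first integral $F_U\colon U\dashrightarrow\Po^{n-k}$ whose fibres are the leaves of $\F|_U$.

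Next I extend $F_U$ globally. Since $K$ is a connected analytic subset of $\Po^n$ of dimension $\ge 1$, Theorem \ref{extension} (Barth, Rossi) extends each meromorphic coordinate of $F_U$ to a meromorphic function on all of $\Po^n$, so $F_U$ extends to a rational map $\Phi\colon\Po^n\dashrightarrow\Po^{n-k}$. The identities expressing that $F_U$ is a first integral of $\F$ on $U$ are identities of meromorphic objects; since $\Om$ is a global holomorphic section of $\Om^{n-k}_n(c)$ and the coordinates of $\Phi$ are rational, the identity principle propagates these relations from $U$ to all of $\Po^n$. Thus $\Phi$ is a global meromorphic first integral of $\F$; a generic fibre of $\Phi$ has dimension $n-(n-k)=k=\dim(\F)$, hence is a finite union of leaves, and after replacing $\Phi$ by its Stein factorization we may assume the generic fibre is a single leaf. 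Therefore $\F$ is a rational fibration.

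Finally I read off the normal form. Clearing common factors in homogeneous coordinates, $\Phi=[f_0^{m_0}:\cdots:f_{n-k}^{m_{n-k}}]$ with the $f_i$ homogeneous, the divisor $\{f_0\cdots f_{n-k}=0\}$ reduced and, by Theorem \ref{NFK}(2), of normal crossings, and $m_i\deg(f_i)$ independent of $i$. Then $\F$ is also defined on $\C^{n+1}$ by the polynomial $(n-k)$--form $\imath_{\mathcal R}\,df_0\wedge\cdots\wedge df_{n-k}$, and the transversal type of this fibration along $\{f_0=\cdots=f_{n-k}=0\}$ is, exactly as computed in Example \ref{ex:1}, the diagonal field $\sum_i(\deg f_i)\,x_i\,\partial/\partial x_i$. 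Since the transversal type is a foliation invariant it coincides with $\mathbf X$, which (using that the $\lambda_i$ are pairwise distinct) forces $\deg(f_i)=\lambda_i$ after relabelling. Euler's identity $\imath_{\mathcal R}df_i=(\deg f_i)f_i=\lambda_i f_i$ then expands the form into
\[
\Om(z_0,\dots,z_n)=\sum_{i=0}^{n-k}(-1)^i\lambda_i\,f_i(z_0,\dots,z_n)\,df_0\wedge\cdots\wedge\widehat{df_i}\wedge\cdots\wedge df_{n-k}
\]
up to a nonzero constant, which is the assertion.

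The main obstacle is the first step: producing a genuinely single--valued meromorphic first integral on a full neighbourhood of $K$, rather than merely a transversely affine or projective structure. This is exactly what the normal--form machinery of \S2 (Theorems \ref{intfact}--\ref{NFK}) is designed to supply --- its proofs use that the cocycles gluing the local transversal models lie in the Poincar\'e domain, hence are constant, and that the non--resonant polar divisor glues to a normal crossings divisor near $K$ --- and the hypothesis $\lambda_i\in\mathbb N$ is indispensable there, since integrality of the residues is what makes the logarithmic part integrable by single--valued functions. The subsidiary points --- that clearing common factors and taking a Stein factorization preserve reducedness and normal crossings of the polar divisor, and that the transversal--type computation pins $\deg f_i=\lambda_i$ --- are where $S_{k-1}(\F)=K(\F)$ and the distinctness of the eigenvalues re-enter.
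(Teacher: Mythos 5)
Your overall architecture coincides with the paper's: its proof in Section \ref{sec:maintheorem} also runs through the normal forms of Theorems \ref{intfact} and \ref{NFK} near $K$, the Barth--Rossi extension Theorem \ref{extension}, and the assembly of an explicit rational fibration. The genuine gap is at your first and decisive step: the claim that already on the neighbourhood $U$ of $K$ the foliation has a single-valued meromorphic first integral $F_U$ because ``the residues of $\eta_{NR}$ are integers, so the monomial ratios $x_i^{\lambda_0}/x_0^{\lambda_i}$ glue''. That is not what Theorems \ref{intfact} and \ref{NFK} deliver: they give only a closed, decomposable meromorphic $(n-k)$--form on $U$. The separatrix coordinates glue by non-constant units, $x^j_{\alpha}=g^j_{\alpha\beta}x^j_{\beta}$, so the ratio $(x^i)^{\lambda_0}/(x^0)^{\lambda_i}$ changes by the unit $(g^i_{\alpha\beta})^{\lambda_0}(g^0_{\alpha\beta})^{-\lambda_i}$; the eigendirection bundles $L_j$ on $K$ have $c_1(L_j)=\lambda_j\,c/\sum\lambda_i\neq 0$ by (\ref{eq:chernclasses}), and relation (\ref{relchern}) only says the relevant combination has vanishing first Chern class, which does not by itself trivialize the cocycle. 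Likewise the resonant data glue only up to affine maps $\varphi_{\alpha}=c_{\alpha\beta}\varphi_{\beta}+b_{\alpha\beta}$, which is exactly why the global objects produced in Theorem \ref{intfact} are the closed forms $\theta_{\ell+j}$ and not functions. Passing from closed forms to actual functions is the heart of the theorem, and integrality of the eigenvalues alone does not provide it on $U$.

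The paper sidesteps precisely this point: it never builds a first integral on $U$, but extends to $\Po^n$ the integrating factor $G$, the components of the invariant divisor (obtaining $f_j\in H^0(\Po^n,\mathcal{O}(d_j))$) and the rational functions $\overline{\varphi}^{\ell+j}$, and only then writes down the fibration on $\Po^n$, absorbing the affine ambiguity into the constants $c_{\ell+j}$. To salvage your route you would have to show that the unit cocycles above can be normalized to constants (additional work of the kind done in \cite{kupka} and in the codimension-one papers), or else follow the paper and extend the divisorial and functional data first. Your later steps inherit the same looseness: writing $\Phi=[f_0^{m_0}:\cdots:f_{n-k}^{m_{n-k}}]$ with $f_0\cdots f_{n-k}$ reduced, and the Stein factorization landing again on $\Po^{n-k}$, are read off in the paper from the explicit construction rather than deduced abstractly. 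Your identification of the degrees via the transversal type of Example \ref{ex:1} is a reasonable alternative to the paper's bookkeeping (compare $d_i=\lambda_i c/\sum\lambda_j$), though it determines them only up to the common normalization of the eigenvalues.
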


\begin{proof} We use the Theorems \ref{intfact} and \ref{NFK} of the section \ref{sec:Kupka}.

Let $\Lambda_{NR}=\{\lambda_0<\cdots \lambda_{\ell}\}$ and
$\Lambda_R=\{\lambda_{\ell+1}<\dots <\lambda_{n-k}\}$ the resonant and non--resonant part respectively.
By (\ref{intfact}) and (\ref{NFK}), on a neighborhood of the Kupka set,
we have the integrating factor $G=G(x_0,\dots,x_{\ell}).$
It is a section on $U$ of the line bundle $\det(N_{\F})=\mathcal{O}(c)$.
Moreover, it can be extended to the projective space by Theorem \ref{extension}.

Now, the closed meromorphic forms $\theta_{\ell+j}$ are locally, logarithmic differentials of meromorphic functions.

Let $f_j\in H^0(\Po^n,\mathcal{O}(d_j))$ be the extension of the components
of the invariant divisor on $U.$ We also consider the extension of the rational functions
\[
\overline{ \varphi }^{\ell+k}=\frac{ f_{\ell+k} }{ f_0^{ m_0^{\ell+k} } \cdots f_{\ell}^{ m_{\ell}^{\ell+k} }  }
\]

Let $g_{\ell+k}=f_{\ell+k}+c_{\ell+k}\cdot f_0^{m_0^{\ell+k}}\cdots f_{\ell}^{m_{\ell}^{\ell+k}}$, where $c_{\ell+k}\in \C$.
Then the fibration is
\[
[f_0^{\lambda_0}:\dots:f_{\ell}^{\lambda_{\ell}}:g_{\ell+1}^{\lambda_{\ell+1}}:\dots
  :g_{n-k}^{\lambda_{n-k}}]:\Po^n\to \Po^{n-k}.
 \]
 \end{proof}

For the Corollary \ref{smallcod}, we begin with some remarks on the Kupka set.

Let $\F\in \K_k(M,L)$ with a Kupka component $K$, and transversal type
\[
\mathbf{X}(x_0,\dots,x_{n-k})=\sum_{i=0}^{n-k}(\lambda_ix_i+h.o.t)
\frac{\partial}{\partial x_i},
\]
then, the residue has the expression
\[
\mbox{Res}(c^{n-k+1}_{1},\F,K(\F))=\frac{(\sum^{n-k}_{i=0}
\lambda_{i})^{n-k+1}}{\prod^{n-k}_{i=0}\lambda_{i}}=
\prod_{j=0}^{n-k}\left(
\sum^{n-k}_{i=0}\frac{\lambda_{i}}{\lambda_j}\right).
\]

Then, by Theorem \ref{degK}, we have
\[
c_1(N_{\F})^{n-k+1}=
  \prod_{j=0}^{n-k}\left(
\sum^{n-k}_{i=0}\frac{\lambda_{i}}{\lambda_j}\right).
[K].
\]

Hence we have
\[
[K]=\prod_{i-0}^{n-k}
\left(\frac{\lambda_ic_1(N_{\F})}{\sum_{i=0}^{n-k}\lambda_i}\right)
\in H^{2(n-k+1)}(M,\C).
\]

In particular, for a foliation $\F\in\F_k(n,c)$, the above formula
looks as follows

\[
\deg(K)=\prod_{i=0}^{n-k}
\frac{\lambda_i( d(\F)+\cod(\F)+1)}{\sum_{i=0}^{n-k}\lambda_i}
=\prod_{i=0}^{n-k}\frac{\lambda_i c}{\sum_{i=0}^{n-k}\lambda_i}
\]

Now, Let $\F\in\K_k(n,c)$ be a foliation with a compact Kupka set
$K=K(\F)$. Let $\jmath:K\hookrightarrow\Po^n$ be the inclusion map and
assume the dimension assumption (\ref{eq:dimension}):
\[
4\leq \dim{\F}\qquad \dim(\F)\geq \cod(\F)+2.
\]

Since the linear part of the transversal type has different
eigenvalues, the normal bundle splits as a direct sum of line
bundles associated to the proper directions \cite{kupka}.
\[
\nu(K,\Po^n)=\bigoplus_{i=0}^{n-k} L_i.
\]

Because $K$ is subcanonical and $\nu(K,\Po^n)=\mathcal{O}_K(c),\quad c=c_1(N_{\F})$, we have
\[
c_1(\nu(K,\Po^n)=\jmath^{\ast}(c_1(N_{\F}))=
\sum_{i=0}^{n-k} c_1(L_i)\in H^2(K,\mathbb{Z}).
\]
On the other hand, the classes
\[\mathbf{c}_i:=\left(\frac{\lambda_i
  }{\sum_{i=0}^{n-k}\lambda_i}\right)\jmath^{\ast}(c_1(N_{\F}))\in
H^2(K,\mathbb{Q}),\quad i=0,\dots, n-k
\]
are precisely the Chern classes $c(L_i)$ of the line bundles
$L_i$, then these classes are integers and the same the numbers
\[
d_i=\frac{\lambda_i c}{\sum_{i=0}^{n-k}\lambda_i}\in \mathbb{Z}\simeq
H^2(K,\mathbb{Z}).
\]
 Then, we have proved.

\begin{ThmSmall}\label{smallcod}
Let $\F\in K_k(n,c)$ be a foliation of small codimension and
linear transversal type
\[
\mathbf{X}=\sum_{i=0}^k \lambda_i x_i \frac{\partial}{\partial
x_i}\quad \lambda_i\neq \lambda_j\quad \mbox{for all}\quad i\neq
j,
\]
then $\lambda_i\in \mathbb{N}$ and $\F$ is a rational fibration.
\end{ThmSmall}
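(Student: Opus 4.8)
The plan is to show that, after multiplying $\mathbf{X}$ by a suitable nonzero constant (which does not change $\F$), the eigenvalues $\lambda_i$ are pairwise distinct \emph{positive integers}, and then to quote the Main Theorem \ref{maintheo}.

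First I would record the consequences of the small codimension hypothesis: by Bezout's theorem $S_{k-1}(\F)=K(\F)=:K$ is compact and connected, so the hypothesis $S_{k-1}(\F)=K(\F)$ of Theorem \ref{maintheo} is satisfied, and $n\geq 5$, so by \cite{Bad} (cf.\ Theorem \ref{NBSM}) the inclusion $\jmath\colon K\hookrightarrow\Po^n$ induces an isomorphism $\jmath^{\ast}\colon\mbox{Pic}(\Po^n)\xrightarrow{\sim}\mbox{Pic}(K)\simeq\Z$ generated by $\mathcal{O}_K(1)$. Since the linear part of $\mathbf{X}$ is diagonal with pairwise distinct eigenvalues, Theorem \ref{normal}, in the sharper form of \cite{kupka}, gives a splitting $\nu(K,\Po^n)=\bigoplus_{i=0}^{n-k}L_i$ of the normal bundle into line bundles attached to the eigendirections, together with the relations $\lambda_j\,c_1(L_i)=\lambda_i\,c_1(L_j)$ in $H^2(K,\Z)$.

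Writing $c_1(L_i)=d_i\,c_1(\mathcal{O}_K(1))$ with $d_i\in\Z$ (legitimate by the previous step), the relations read $\lambda_j d_i=\lambda_i d_j$ for all $i,j$, so $(\lambda_0,\dots,\lambda_{n-k})$ is proportional to the integer vector $(d_0,\dots,d_{n-k})$; rescaling $\mathbf{X}$ I may therefore assume $\lambda_i=d_i$, with $d_i\neq 0$ because $\lambda_i\neq 0$. To see $d_i>0$ I would restrict the Euler sequence to $K$: $\Theta_n(-1)|_K$ is a quotient of $\mathcal{O}_K^{\,n+1}$, hence globally generated, and so is any of its quotients; composing $T_{\Po^n}|_K\twoheadrightarrow\nu(K,\Po^n)\twoheadrightarrow L_i$ shows that $L_i\otimes\mathcal{O}_K(-1)$ is globally generated, hence nef, while its first Chern class equals $(d_i-1)\,c_1(\mathcal{O}_K(1))$; pairing with a curve in $K$ and using that $\mathcal{O}_K(1)$ is ample forces $d_i\geq 1$. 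Hence $\lambda_i\in\mathbb{N}$, and since moreover $\lambda_i\neq\lambda_j$ and $S_{k-1}(\F)=K(\F)$, Theorem \ref{maintheo} applies and realizes $\F$ as the rational fibration represented by $\Om=\sum_{i=0}^{n-k}(-1)^i\lambda_i\,f_i\,df_0\wedge\cdots\wedge\widehat{df_i}\wedge\cdots\wedge df_{n-k}$.

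The crux is the identification $\mbox{Pic}(K)\simeq\Z$ via \cite{Bad}: it is what upgrades the a priori merely rational proportionality of the eigenvalues — which is all the Chern class relations by themselves give — to genuine integrality, after which the positivity is the soft Euler sequence argument above and the conclusion is immediate from Theorem \ref{maintheo}. Accordingly, the step I expect to require the most care is checking that the small codimension hypothesis really places us in the range ($n\geq 5$, distinct eigenvalues) where both \cite{Bad} and the normal bundle decomposition of \cite{kupka} are available.
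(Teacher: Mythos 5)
Your proposal follows essentially the same route as the paper: the eigendirection splitting of $\nu(K,\Po^n)$ and the relations $\lambda_j c_1(L_i)=\lambda_i c_1(L_j)$ from \cite{kupka}, combined with $\mathrm{Pic}(K)\simeq\Z$ via \cite{Bad} (available in the small codimension range, where Bezout also gives $S_{k-1}(\F)=K(\F)$ compact and connected), force the $\lambda_i$ to be proportional to integers $d_i$, after which Theorem \ref{maintheo} yields the rational fibration, exactly as in the text. The only difference is your explicit Euler-sequence argument showing $L_i\otimes\mathcal{O}_K(-1)$ is globally generated and hence $d_i\geq 1$; the paper leaves this positivity step implicit, so your write-up is, if anything, slightly more complete on that point.
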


Now, we consider the radial case.

\begin{Thm}\label{Radial}
Let $\F\in K_k(n,c)$ with a Kupka set $K$ and radial transversal
type. If $K$ is a complete intersection, then $\F$ is a rational
fibration of the type
\[[f_0:\dots:f_{n-k}]:\Po^n\to\Po^{n-k},\quad deg(f_j)=\frac{c}{n-k+1}\]
\end{Thm}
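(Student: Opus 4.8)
The strategy is the one used for Theorem \ref{maintheo}: construct a meromorphic first integral on a neighborhood of $K$, extend it to $\Po^n$, and identify the resulting fibration. First I would fix the degrees of the $f_j$. Writing $K=\{f_0=\cdots=f_{n-k}=0\}$, the normal bundle of the complete intersection is $\nu(K,\Po^n)=\bigoplus_{j=0}^{n-k}\mathcal{O}_K(\deg f_j)$. Since the transversal type is radial, Theorem \ref{normal} forces $\nu(K,\Po^n)$ to be projectively flat; a decomposable projectively flat bundle, after a suitable line bundle twist $M$, is Hermitian flat, so each summand $\mathcal{O}_K(\deg f_j)\otimes M$ has trivial first Chern class, and since $\mathcal{O}_K(1)$ is ample this forces $\deg f_0=\cdots=\deg f_{n-k}=:d$. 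By Proposition \ref{subcanonical}, $\bigwedge^{n-k+1}\nu(K,\Po^n)=\mathcal{O}_K(c)$, hence $(n-k+1)d=c$ and $d=c/(n-k+1)$.

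Next, by Theorem \ref{RadLoc} there is a connected neighborhood $U$ of $K$ on which $\F$ carries a transversely projective structure; equivalently, blowing up $K$, the strict transform $\widetilde{\F}$ is nonsingular near $E=\Po(\nu(K,\Po^n))$, transverse to it, and induces on $E$ a flat $\Po^{n-k}$-connection, i.e. a holonomy representation $\rho:\pi_1(K)\to\mathbf{PGL}(n-k,\C)$. The crucial point is that $\rho$ is trivial: when $\dim_\C K=k-1\ge 2$ this holds because a smooth complete intersection of dimension $\ge 2$ in $\Po^n$ is simply connected; the case $\dim_\C K=1$ requires a separate analysis of the monodromy, using that the $\F$-invariant hypersurfaces through $K$ are algebraic (for $n=3$ this is exactly Theorem \ref{CL}). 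Granting triviality of $\rho$, the developing map of the structure descends to a meromorphic map $U\dashrightarrow\Po^{n-k}$ whose fibers are the leaves of $\F|_U$, with poles along the invariant divisor supplied by the structure.

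Finally, each affine coordinate of this local first integral is a meromorphic function on the connected neighborhood $U\supset K$, hence extends to $\Po^n$ by Theorem \ref{extension}, producing a rational map $\Phi:\Po^n\dashrightarrow\Po^{n-k}$; since $\Phi$ and $\F$ induce the same foliation on a nonempty open subset of $\Po^n$ and $\Po^n$ is irreducible, the fibers of $\Phi$ are the leaves of $\F$ everywhere, so $\F$ is a rational fibration. Writing $\Phi=[g_0:\cdots:g_{n-k}]$ with the $g_j$ homogeneous of a common degree $e$ and without common factor, its base locus is contained in $S(\F)$, contains $K$ (on which all leaves accumulate in the radial local model), and has codimension at most $n-k+1$, which is the codimension of $K$; hence it equals $K$, so $K=\{g_0=\cdots=g_{n-k}=0\}$ is a complete intersection of degree-$e$ hypersurfaces and the argument of the first paragraph applied to the $g_j$ gives $e=c/(n-k+1)=d$. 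Therefore $\F$ is the rational fibration $[f_0:\cdots:f_{n-k}]:\Po^n\to\Po^{n-k}$ with $\deg f_j=c/(n-k+1)$, represented in homogeneous coordinates by $\Om=\imath_{\mathcal R}\,df_0\wedge\cdots\wedge df_{n-k}=\sum_{j=0}^{n-k}(-1)^j d\,f_j\,df_0\wedge\cdots\wedge\widehat{df_j}\wedge\cdots\wedge df_{n-k}$.

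The main obstacle is the triviality of the holonomy $\rho$, equivalently that the transversely projective structure near $K$ is actually a meromorphic fibration. For $\dim K\ge 2$ this is the simple connectivity of smooth complete intersections and is immediate; for the curve case $\dim K=1$ it is delicate, and it is precisely there that the complete-intersection hypothesis must be used.
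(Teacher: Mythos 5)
Your overall strategy is the paper's: get the transversely projective structure near $K$ from Theorem \ref{RadLoc}, kill the monodromy so the developing map becomes a meromorphic first integral, and extend to $\Po^n$ via Theorem \ref{extension}. The genuine gap is in how you kill the monodromy. You trivialize the holonomy $\rho:\pi_1(K)\to\mathbf{PGL}(n-k,\C)$ by simple connectivity of $K$, which by Lefschetz only works when $\dim K=k-1\geq 2$; for $k=2$ the Kupka set is a complete intersection curve, in general of positive genus, and you explicitly leave that case as ``requiring a separate analysis''. Since the theorem is asserted for every dimension $k>1$ (and the $\dim K=1$ situation is exactly the one of Theorem \ref{CL}), this is a real hole, not a routine remark. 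The paper's proof avoids the dichotomy entirely: it extends the Maurer--Cartan forms defining the transverse projective structure from the neighborhood $U$ of $K$ to all of $\Po^n$ (meromorphic extension in the spirit of Theorem \ref{extension}), so the associated flat $\Po^{n-k}$-bundle lives over $\Po^n$; since $\pi_1(\Po^n)=1$ this bundle is the product $\Po^n\times\Po^{n-k}$, and the developing section $z\dashrightarrow(z,\Phi(z))$ directly yields the rational fibration $\Phi$, with no hypothesis on $\pi_1(K)$ and no case distinction on $\dim K$. The missing idea, concretely, is to extend the projective structure (its defining meromorphic forms) first and use simple connectivity of the ambient $\Po^n$, not of $K$.

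A secondary weak point: your determination of the degrees in the first paragraph (decomposable plus projectively flat forces all summands $\mathcal{O}_K(\deg f_j)$ to coincide after a twist) rests on a Chern class comparison that degenerates exactly when $\dim K=1$, where $c_1(\mathcal{O}_K(1))^2=0$, so it fails in the same range as your monodromy argument. It is simpler to read the degrees off at the end: once $\F$ is the fibration $[g_0:\cdots:g_{n-k}]$, its Kupka component has transversal type $\sum_j \deg(g_j)\,x_j\,\partial/\partial x_j$ as in Example \ref{ex:1}, so radial type forces all the $\deg(g_j)$ to be equal, and $c_1(N_{\F})=\sum_j\deg(g_j)=c$ gives $\deg(g_j)=c/(n-k+1)$.
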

\begin{proof}
Let $U$ be a neighborhood of the Kupka set where the foliation has
a projective transverse structure. In this neighborhood we have
the Maurer--Cartan forms that defines the structure. We are able to
extend this forms and then the foliation has a projective
transversal structure on $\Po^n$.

Since $\Po^n$ is simply connected, any $\Po^{n-k}$ flat bundle is
$\Po^n\times \Po^{n-k}\stackrel{\pi_1}{\rightarrow} \Po^n$ and the rational
section is of the type $z\dashrightarrow(z,\Phi(z))$, then $\Phi$ must
be a rational fibration.
\end{proof}

\vglue 10pt
\section{An application: Normal form of non-integrable
  codimension one  distributions.}\label{sec:Noint}

Let $\omega$ be a germ of  $1$-form on $(\mathbb{C}^n,0)$.  We
denote by
\[ (d\omega)^s= \overbrace{d\omega \wedge \cdots \wedge d\omega}^{\mbox{s
times}}.
\]

Let $\omega\in H^0(M, \Om_M^1(L))$. We define the \emph{class of the
foliation} induced by $\omega$ to be the integer $r$ for which
generically
\[
\omega\wedge (d\omega)^{r-1}\neq 0, \quad  \omega\wedge
(d\omega)^{r}\equiv 0.
\]
See \cite{G} for the general theory.

The Kupka set of the distribution $\F$ induced by $\omega\in H^0(M,
\Om_M^1(L))$ is defined by
$$
K(\F)=\{p\in M;\ \omega(p)=0, \ (d\omega)^{r}(p)\neq 0 \}\subset
Sing(\F).
$$

The next result gives a Kupka type phenomena in the non integrable case.

\begin{theorem}[Kupka type phenomena]
Let $\omega\in H^0(M, \Om_M^1(L))$ a codimension one holomorphic
distribution of  class $r$. Given a connected Kupka component $K$
there exists a germ at $0\in \C^{2r}$ of a holomorphic $1$--form
\[
\sum_{i=1}^{2r}A_i(x_1,\dots,x_{2r})dx_i
\]
with an isolated singularity at $0\in \C^{2r}$, an open
covering $\{U_{\alpha}\}_{\alpha}$ of a neighborhood of $K\subset X$,
and a family of submersions
$\varphi_\alpha: U_\alpha\rightarrow \C^{2r}$ such that
\[
 \varphi_\alpha^{-1}(0)=K\cap U_\alpha\quad\mbox{and}\quad \omega_\alpha=\varphi_\alpha^{*}\eta.
\]
%

\end{theorem}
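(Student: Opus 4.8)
The plan is to mimic the proof of the local product structure theorem of Medeiros (Theorem \ref{mede}) that was used for integrable forms, but now working with the $1$--form $\omega$ and its exterior derivative $(d\omega)^r$ instead of $\Omega$ and $d\Omega$. The starting observation is that at a point $p\in K(\mathcal{D})$ we have $\omega(p)=0$ but $(d\omega)^r(p)\neq 0$, so $d\omega$ has rank exactly $2r$ at $p$ (it is a $2$--form whose $r$-th power is nonzero but whose $(r+1)$-th power vanishes near the Kupka set by the class hypothesis). By the hypothesis $\cod\,\mathrm{Sing}((d\omega)^r)\geq 3$, this rank condition is stable along a submanifold of codimension $2r$ through $p$. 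First I would invoke the holomorphic Darboux-type normal form for the closed $2$--form $d\omega$ of constant rank $2r$: in suitable local coordinates $(x_1,\dots,x_{2r},y_1,\dots,y_{m})$ one can write $d\omega = \sum_{i=1}^{r} dx_i\wedge dx_{i+r}$, so that $d\omega$ is the pullback of a symplectic form on $\C^{2r}$ under a submersion $\varphi_\alpha:U_\alpha\to\C^{2r}$.

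The next step is to descend $\omega$ itself through $\varphi_\alpha$. Since $d\omega = \varphi_\alpha^{*}(\sum dx_i\wedge dx_{i+r})$, the $1$-form $\omega - \varphi_\alpha^{*}(\sum_{i=1}^r x_i dx_{i+r})$ is closed, hence locally exact, say equal to $dg$ for a holomorphic function $g$. One then argues that $g$ is constant along the fibers of $\varphi_\alpha$: the fibers are tangent to the kernel of $d\omega$, and since $\omega(p)=0$ on $K$ and the Kupka component sits inside a single fiber, a connectedness/integration argument forces $g=h\circ\varphi_\alpha$ for some holomorphic $h$ on $\C^{2r}$. Absorbing $dh$ into the model, $\omega_\alpha := \omega|_{U_\alpha}$ becomes $\varphi_\alpha^{*}\eta$ where $\eta = \sum_{i=1}^r x_i dx_{i+r} + dh$ is a germ of a holomorphic $1$--form on $(\C^{2r},0)$. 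The condition $\varphi_\alpha^{-1}(0)=K\cap U_\alpha$ is arranged by a translation so that the common zero locus sits over the origin, and the isolated-singularity claim for $\eta$ follows because $d\eta = \sum dx_i\wedge dx_{i+r}$ has maximal rank, so $\mathrm{Sing}(\eta)=\{d h = -\sum x_i dx_{i+r}\}$ is cut out by $2r$ independent equations near $0$.

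Finally, I would check the uniqueness of the transversal germ $\eta$ up to the natural equivalence and independence of the chart: on overlaps $U_\alpha\cap U_\beta$ the two submersions $\varphi_\alpha,\varphi_\beta$ differ by a biholomorphism of $\C^{2r}$ preserving the pair $(\eta, d\eta)$, exactly as in the proof of Theorem \ref{mede}, because both pull back $\eta$ to the same restriction of $\omega$. This gives the stated open covering $\{U_\alpha\}$, the family of submersions, and the germ $\eta=\sum_{i=1}^{2r}A_i(x_1,\dots,x_{2r})dx_i$.

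The main obstacle I anticipate is the descent of $\omega$ (rather than $d\omega$) to the quotient $\C^{2r}$: unlike the integrable case, $\omega$ is not closed and not decomposable, so one cannot simply reuse the Frobenius-type argument from Medeiros. The delicate point is showing that the "primitive discrepancy" $\omega - \varphi_\alpha^{*}(\sum x_i dx_{i+r})$, which is closed, actually descends — i.e. that its primitive is constant on fibers — and that this can be done compatibly on overlaps so that the local $\eta$'s glue into one germ. Controlling this requires the codimension hypothesis $\cod\,\mathrm{Sing}((d\omega)^r)\geq 3$ to guarantee that the rank-$2r$ locus is connected and large enough for Hartogs-type extension of the fiberwise-constant function, and a careful use of the fact that $K$ is connected and contained in a single fiber.
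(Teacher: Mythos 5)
Your overall skeleton (Darboux normal form for $d\omega$, then writing $\omega-\varphi_\alpha^{*}\bigl(\sum_{i=1}^{r}x_i\,dx_{i+r}\bigr)=dg$ by the Poincar\'e lemma) is exactly the paper's, but the step you yourself flag as the main obstacle --- showing that the primitive $g$ descends through $\varphi_\alpha$ --- is where your argument has a genuine gap. Fiberwise constancy of $g$ is \emph{equivalent} to $\ker d\omega\subset\ker\omega$ at points where $(d\omega)^r\neq0$, which is precisely the class condition $\omega\wedge(d\omega)^r\equiv0$; it is a pointwise algebraic identity, not something you can extract from connectedness of $K$, from $K$ lying in a single fiber, or from a Hartogs-type extension. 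Knowing $\omega|_K=0$ only controls $dg$ along the one fiber $\varphi_\alpha^{-1}(0)$ and says nothing about the nearby fibers; and without the class condition the descent is simply false (e.g.\ $\omega=x_1dx_2+dy_1$ has $d\omega$ of rank $2$ and closed discrepancy $dy_1$, which does not descend --- there $\omega\wedge d\omega\neq0$). Moreover, the hypothesis $\cod\,\mathrm{Sing}((d\omega)^r)\geq 3$ that you invoke to ``control'' this step is not part of the statement; it belongs to the global Theorem 4, and the present theorem is purely local near $K$, where $(d\omega)^r$ is automatically nonvanishing.

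The paper closes this gap in one line: in the Darboux coordinates, $\omega\wedge(d\omega)^r\equiv0$ reads
\[
0=\Bigl(\sum_{i=1}^{r}x_i\,dx_{i+r}+df\Bigr)\wedge dx_1\wedge\cdots\wedge dx_{2r}
= df\wedge dx_1\wedge\cdots\wedge dx_{2r},
\]
so $\partial f/\partial x_i=0$ for all $i>2r$, i.e.\ $f=f(x_1,\dots,x_{2r})$ and $\omega=\varphi_\alpha^{*}\eta$ with $\eta=\sum_{i=1}^{r}x_i\,dx_{i+r}+df$. Equivalently, contracting $\omega\wedge(d\omega)^r=0$ with a vertical vector $\partial/\partial y_j$ gives $\omega(\partial/\partial y_j)(d\omega)^r=0$, hence $\partial g/\partial y_j=0$. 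You should replace your connectedness/Hartogs paragraph by this computation; with that substitution the rest of your plan (translation so that the common zero locus sits over $0$, and comparison of charts on overlaps) is in line with the paper, although note that the paper itself does not elaborate on the isolated-singularity and gluing claims beyond establishing this local normal form, so your closing remarks on those points are additions rather than reconstructions of the printed argument.
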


\begin{proof}
Since $(d\omega)^r(0)\neq 0$ it follows from the classical Darboux Theorem that
\[
d\omega= \sum_{i=1}^{r} dx_i\wedge dx_{i+r}.
\]

Then $d(\omega -\sum_{i=1}^{r} x_i dx_{i+r})=0$. From Poincar\'e
lemma we get that there exist $f\in \mathcal{O}_{n}$ such that
$\omega -\sum_{i=1}^{r} x_i dx_{i+r}=df.$ Using that
\[
(d\omega)^r=(-1)^{\frac{r(r-1)}{2}}r!dx_1\wedge \cdots \wedge
dx_{2r},
\]
we get
\[
0=\omega\wedge
(d\omega)^{r}=(-1)^{\frac{r(r-1)}{2}}r!\left(\sum_{i=1}^{r} x_i
dx_{i+r}+df\right)\wedge dx_1\wedge \cdots \wedge dx_{2r}.
\]

Then $df\wedge dx_1\wedge \cdots \wedge dx_{2r}=0$ implies that
\[
\frac{\partial f}{\partial x_i}=0 \quad \mbox{for all}\quad i= r+1,\dots ,n
\]
i.e., $f$ depends only of the variables
$(x_1,\dots,x_{2r})$.
\end{proof}
\vglue 8pt
\begin{example}
Let $\mathcal{D}$ be the distribution on $\Po^{n}$, of degree $d$, induced
given, in homogenous coordinates, by
\[
\omega=\sum_{i=1}^{r-1}(f_idf_{i+r}- f_{i+r}df_{i})
\]
such that $df_0\wedge \cdots \wedge df_{2r-1}(z)\neq 0$ .
The transversal type of $\omega$ is
\[
\omega=\sum_{i=1}^{r-1}(x_idx_{i+r}- x_{i+r}dx_{i}).
\]
In fact, the map
\[
(f_0,\dots,f_{2r-1}): \C^{n+1}\longrightarrow\C^{2r}
\]
is a submersion, since $df_1\wedge \cdots \wedge
df_{2r}(z)\neq 0$. Thus, there exist a holomorphic coordinate system
$(x_1,\dots,x_{2r},z)\in U\subset \C^{n+1} $ such that
$f_i|_{U}=x_i$.
\end{example}

\begin{nointthm}
Let $\mathcal{D}$ be a codimension one distribution on $\Po^n$, of class $r$,
given by a $1$--form $\omega \in H^0(\Po^n, \Om_{\Po^n}^1(d+2))$
such that $\cod Sing( (d\omega)^r)\ge 3$. Suppose that the Kupka
component of $\mathcal{D}$   has transversal type
\[
\sum_{i=0}^{r-1}(x_idx_{i+r}- x_{i+r}dx_{i}).
\]
Then $\mathcal{D}$ is induced, in homogenous coordinates, by the $1$-form
\[
\sum_{i=0}^{r-1}(f_idf_{i+r}- f_{i+r}df_{i}).
\]
\end{nointthm}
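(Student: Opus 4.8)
The plan is to mimic the proof of the Main Theorem, replacing the integrability normal form by the one produced by the ``Kupka type phenomena'' Darboux lemma above. First I would invoke that lemma: the hypothesis $\cod\,\mathrm{Sing}((d\omega)^r)\ge 3$ together with the prescribed transversal type gives an open cover $\{U_\alpha\}$ of a neighborhood $U$ of the Kupka component $K$ and submersions $\varphi_\alpha\colon U_\alpha\to\C^{2r}$ with $\varphi_\alpha^{-1}(0)=K\cap U_\alpha$ pulling back the model $1$--form $\eta=\sum_{i=0}^{r-1}(x_idx_{i+r}-x_{i+r}dx_i)$; in particular, on overlaps $\omega_\alpha=\lambda_{\alpha\beta}\omega_\beta$ with $\lambda_{\alpha\beta}\in\mathcal{O}^\ast(U_{\alpha\beta})$, while $d\omega_\alpha = \lambda_{\alpha\beta}\,d\omega_\beta$ on $K$. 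Since the transversal model is homogeneous linear, the divisor $\{x_0=\cdots=x_{2r-1}=0\}$ (the zero locus of the submersion) glues to give $K$ as a smooth subvariety, and the Picard-type result \cite{Bad} (as used for the corollaries, $n\ge 5$ since $\dim\ge\cod+2$ is not needed here, but an analogous extension argument is) identifies the relevant line bundles on $K$ as restrictions of bundles on $\Po^n$.

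Next I would globalize the model coordinates. The model $\eta=\sum(x_idx_{i+r}-x_{i+r}dx_i)$ is, up to scale, $\imath_{\mathcal R}(dx_0\wedge\cdots)$--type data; concretely the submersion components $\varphi_\alpha=(x_{\alpha,0},\dots,x_{\alpha,2r-1})$ transform on overlaps by an element of the group preserving $\eta$ projectively, i.e. by a (conformal) symplectic-type linear cocycle. The coordinate functions $x_{\alpha,i}$ are holomorphic on $U_\alpha$ and vanish on $K$, hence define sections of line bundles $\mathcal O_\alpha$ over $U$; by the extension theorem (Theorem~\ref{extension} of Barth--Rossi) every meromorphic function on $U$ extends to $\Po^n$, so after clearing denominators each $x_{\alpha,i}$ is the restriction of a global rational, in fact homogeneous polynomial, function $f_i\in H^0(\Po^n,\mathcal O(d))$ with $d=\deg f_i$ common for all $i$ (forced by the cocycle being linear and the normal bundle splitting/projective flatness). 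Then the global $1$--form $\sum_{i=0}^{r-1}(f_idf_{i+r}-f_{i+r}df_i)$ agrees with $\omega$ on the neighborhood $U$ of $K$ up to a never-vanishing factor, hence on all of $\Po^n$ since both lie in $H^0(\Po^n,\Om^1(d+2))$ and agree on an open set.

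The main obstacle I expect is the extension/globalization step: one must show the locally defined coordinate functions $x_{\alpha,i}$, which a priori are only holomorphic on small pieces of a neighborhood of $K$ and glue only projectively (by a $\mathbf{PGL}$- or conformal-symplectic-valued cocycle), actually descend to \emph{single-valued} meromorphic functions on $U$ and then extend to homogeneous polynomials on $\C^{n+1}$ of a common degree. This is where one needs the Kupka set to be, or behave like, a complete intersection — exactly as in the radial and small-codimension cases — so that $H^1$ obstructions vanish and the Picard group of a neighborhood of $K$ is generated by $\mathcal O(1)$; the condition $\cod\,\mathrm{Sing}((d\omega)^r)\ge 3$ is precisely what makes $K$ have codimension $2r$ cut out cleanly and makes these cohomological vanishing statements available. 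Once the $f_i$ are in hand, verifying that $\sum_{i=0}^{r-1}(f_idf_{i+r}-f_{i+r}df_i)$ has the right degree $d+2$ and defines the same distribution is a routine check using $\imath_{\mathcal R}$ and the computation already done in the Example preceding the statement.
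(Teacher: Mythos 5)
Your proposal has a genuine gap at the step you yourself flag as the main obstacle, and the difficulty is not one that the stated hypotheses repair in the way you suggest. The local ``Kupka type phenomena'' lemma gives Darboux-type submersions $\varphi_\alpha$ pulling back $\eta=\sum_{i=0}^{r-1}(x_idx_{i+r}-x_{i+r}dx_i)$, but on overlaps the changes of submersion are arbitrary germs of biholomorphisms preserving $\eta$ up to a unit; this pseudogroup is infinite dimensional and non-abelian, and nothing forces the transitions to be \emph{linear} conformal--symplectic maps. Even if they were linear, a symplectic cocycle mixes the coordinates, so the individual functions $x_{\alpha,i}$ do not glue to sections of line bundles on a neighborhood $U$ of $K$, and the Barth--Rossi theorem cannot be invoked for them: it extends meromorphic functions already defined on all of $U$, not locally defined functions with a nontrivial (non-abelian) gluing. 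A Picard-group or $H^1$-vanishing argument does not address this, and the hypothesis $\cod\,\mathrm{Sing}((d\omega)^r)\ge 3$ plays no such cohomological role in the actual proof. So the globalization, as proposed, does not go through.

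The paper's route avoids this entirely by extracting \emph{canonical} objects from $\omega$ that the integrable theory can handle. One sets $\Theta=\omega\wedge(d\omega)^{r-1}$, which is integrable, has $K$ as a Kupka component of \emph{radial} transversal type, and satisfies $d\Theta=(d\omega)^r$. Theorem \ref{Radial} then gives
\[
\Theta=\sum_{i=0}^{2r-1}(-1)^i f_i\, df_0\wedge\cdots\wedge\widehat{df_i}\wedge\cdots\wedge df_{2r-1},
\]
so $(d\omega)^r$ is globally decomposable; the hypothesis $\cod\,\mathrm{Sing}((d\omega)^r)\ge 3$ is used precisely as the hypothesis of Cerveau's singular Darboux theorem \cite{cerveau-darboux}, yielding $d\omega=\sum_{i=0}^{r-1}df_i\wedge df_{i+r}$. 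Contracting with the radial (Euler) vector field gives $(d+2)\,\omega=\imath_{\mathcal R}d\omega=\sum_{i=0}^{r-1}(m_{i+r}f_idf_{i+r}-m_if_{i+r}df_i)$ with $m_i=\deg f_i$, and comparison with the prescribed transversal type forces $m_i=m_j$, which is the assertion. Your proposal misses both key ideas --- passing to the integrable form $\omega\wedge(d\omega)^{r-1}$ so the radial-case theorem applies, and recovering $\omega$ from $d\omega$ via the Euler contraction --- and without them the direct globalization of Darboux coordinates would require substantial new arguments not supplied here.
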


\begin{proof}
We have that the $(2r-1)$-form $\Theta=\omega\wedge (d\omega)^{r-1}$
is integrable and $K$ is a Kupka component  of radial type of
$\Theta.$ It follows from  Theorem \ref{Radial} that
\[
\Theta=\omega\wedge (d\omega)^{r-1}=\sum_{i=0}^{2r-1}(-1)^i f_idf_0
\wedge\dots \wedge  \widehat{df_i} \wedge \cdots\wedge df_{2r-1}.
\]
In particular, this implies that $(d\omega)^r$ is  globally
decomposable on $\mathbb{C}^{n+1}$. Since $\cod Sing( (d\omega)^r)\ge
3$ it follows from \cite{cerveau-darboux} that
\[
d\omega = \sum_{i=0}^{r-1}df_i\wedge df_{i+r}.
\]
Now, contracting by the radial vector field we conclude
\[
(d+2)\omega= i_R d\omega=\sum_{i=0}^{r-1}(m_{i+r}f_idf_{i+r}- m_if_{i+r}df_{i}),
\]
where $m_i=\deg(f_i)$. Comparing $\omega$ with its transversal type
we see that $m_i=m_j$, for all $i,j=0,\dots 2r-1.$
\end{proof}

\emph{ Acknowledgements.\/} We would to thank to the Department of
Algebra, geometry, topology and Analysis of the Valladolid
University, for its hospitality during the elaboration to this work.

The first author also thank to the Federal University of Minas Gerais UFMG
and IMPA, for the hospitality during the elaboration of this work.

O. Calvo--Andrade: Partially supported by CNPq

M. Corr\^ea Jr.: Partially supported by CAPES--DGU 247/11, CNPq 300352/2012--3 and
PPM--00169--13.

A. Ferrn\'andez--P\'erez: Partially supported by CNPq 301635/2013--7  and FAPEMIG APQ--00371--13

\enddocument